\newcommand{\cpow}{\mathord{\raisebox{-0.15ex}{\scalebox{1.15}{$\square$}}}}
\definecolor{Sborder}{RGB}{80,227,194}
\definecolor{Sfill}  {RGB}{227,255,253}
\definecolor{Gedge}  {RGB}{80,207,21}
\definecolor{Redge}  {RGB}{239,68,33}
\def\@secnumfont{\bfseries\scshape}
\def\section{\@startsection{section}{1}
	\z@{.9\linespacing\@plus\linespacing}{.5\linespacing}%
	{\normalfont\large\bfseries\scshape\centering}}
\def\subsection{\@startsection{subsection}{2}%
	\z@{.5\linespacing\@plus.7\linespacing}{-.5em}%
	{\normalfont\bfseries\scshape}}
\def\@secnumfont{\scshape}
\def\subsubsection{\@startsection{subsubsection}{3}%
	\z@{.5\linespacing\@plus.7\linespacing}{-.5em}%
	{\normalfont\scshape}}
\def\specialsection{\@startsection{section}{1}%
	\z@{\linespacing\@plus\linespacing}{.5\linespacing}%
	{\normalfont\centering\large\bfseries\scshape}}
\renewenvironment{proof}[1][\proofname]{\par
	\pushQED{\qed}%
	\normalfont \topsep4\p@\@plus4\p@\relax
	\trivlist
	\item[\hskip\labelsep
	\bfseries
	#1\@addpunct{.}]\ignorespaces
}{%
	\popQED\endtrivlist\@endpefalse
}
\newcommand \Dotfill {\leavevmode \leaders \hb@xt@ 6pt{\hss .\hss }\hfill \kern \z@}
\def\@tocline#1#2#3#4#5#6#7{\relax
	\ifnum #1>\c@tocdepth 
	\else
	\par \addpenalty\@secpenalty\addvspace{#2}%
	\begingroup \hyphenpenalty\@M
	\@ifempty{#4}{%
		\@tempdima\csname r@tocindent\number#1\endcsname\relax
	}{%
		\@tempdima#4\relax
	}%
	\parindent\z@ \leftskip#3\relax \advance\leftskip\@tempdima\relax
	\rightskip\@pnumwidth plus4em \parfillskip-\@pnumwidth
	#5\leavevmode\hskip-\@tempdima
	\ifcase #1
	\or\or \hskip 1.65em \or \hskip 3.3em \else \hskip 4.95em \fi%
	#6\nobreak\relax
	\Dotfill
	\hbox to\@pnumwidth{\@tocpagenum{#7}}\par
	\nobreak
	\endgroup
	\fi}
\def\l@section{\@tocline{1}{0pt}{1pc}{}{\scshape}}
\renewcommand{\tocsection}[3]{%
	\indentlabel{\@ifnotempty{#2}{\ignorespaces#1 #2.\hskip 0.7em}}#3}
\def\l@subsection{\@tocline{2}{0pt}{1pc}{5pc}{}}
\def\l@subsubsection{\@tocline{3}{0pt}{1pc}{7pc}{}}
\numberwithin{equation}{section}
\newtheoremstyle{mytheorem}{.7\linespacing\@plus.3\linespacing}{.7\linespacing\@plus.3\linespacing}%
{\itshape}
{}
{\bfseries}
{. }
{0.3ex}
{\thmname{{\bfseries #1}}\thmnumber{ {\bfseries #2}}\thmnote{ (#3)}}  
\theoremstyle{mytheorem}
\newtheorem{theorem}{Theorem}[section]
\newtheorem{lemma}[theorem]{Lemma}
\newtheorem{convention}[theorem]{Convention}
\newtheorem{proposition}[theorem]{Proposition}
\newtheorem{remark}[theorem]{Remark}
\newtheorem{definition}[theorem]{Definition}
\newtheorem{example}[theorem]{Example}
\newcommand{\bbE}{{\ensuremath{\mathbb E}} }
\newcommand{\bbS}{{\ensuremath{\mathbb S}} }
\newcommand{\cE}{{\ensuremath{\mathcal E}} }
\newcommand{\cF}{{\ensuremath{\mathcal F}} }
\newcommand{\cG}{{\ensuremath{\mathcal G}} }
\newcommand{\cL}{{\ensuremath{\mathcal L}} }
\newcommand{\cN}{{\ensuremath{\mathcal N}} }
\newcommand{\cR}{{\ensuremath{\mathcal R}} }
\DeclareMathSymbol{\leqslant}{\mathalpha}{AMSa}{"36} 
\DeclareMathSymbol{\geqslant}{\mathalpha}{AMSa}{"3E} 
\DeclareMathSymbol{\eset}{\mathalpha}{AMSb}{"3F}     
\newcommand{\be}{\begin{equation}}
	\newcommand{\ee}{\end{equation}}
\newcommand{\R}{\mathbb{R}}
\newcommand{\N}{\mathbb{N}}
\newcommand{\ind}{\mathds{1}}
\renewcommand{\epsilon}{\varepsilon}
\renewcommand{\phi}{\varphi}
\renewcommand{\theta}{\vartheta}
\renewcommand{\rho}{\varrho}
\newenvironment{myenumerate}{%
	\renewcommand{\theenumi}{\arabic{enumi}}%
	\renewcommand{\labelenumi}{{\rm(\theenumi)}}%
	\begin{list}{\labelenumi}
		{%
			\setlength{\itemsep}{0.4em}%
			\setlength{\topsep}{0.5em}%
			\setlength\leftmargin{2.45em}%
			\setlength\labelwidth{2.05em}%
			\setlength{\labelsep}{0.4em}%
			\usecounter{enumi}%
		}%
	}%
	{\end{list}
}
\renewenvironment{enumerate}{
	\begin{myenumerate}}%
	{\end{myenumerate}}
\newenvironment{myitemize}{%
	\begin{list}{$\bullet$}%
		{%
			\setlength{\itemsep}{0.4em}%
			\setlength{\topsep}{0.5em}%
			\setlength\leftmargin{2.65em}%
			\setlength\labelwidth{2.65em}%
			\setlength{\labelsep}{0.4em}%
		}%
	}%
	{\end{list}}
\renewenvironment{itemize}{
	\begin{myitemize}}%
	{\end{myitemize}}
\newcommand{\rme}{\mathrm{e}}
\newlist{conditions}{enumerate}{1}
\setlist[conditions,1]{label=\textbf{Condition (R)}, ref=R}
\title{\bf On irreducible central limit theorems}
\author[F. Caravenna]{Francesco Caravenna}
\address{Dipartimento di Matematica e Applicazioni\\
 Universit\`a degli Studi di Milano-Bicocca\\
 via Cozzi 55, 20125 Milano, Italy}
\email{francesco.caravenna@unimib.it}
\author[F. Cottini]{Francesca Cottini}
\address{LPSM, Sorbonne Université, 4 place Jussieu, 75005 Paris, France}
\email{francesca.cottini@sorbonne-universite.fr}
\author[G. Peccati]{Giovanni Peccati}
\address{Department of Mathematics, University of Luxembourg, Maison du Nombre
6, avenue de la Fonte
L-4364 Esch-sur-Alzette, Luxembourg}
\email{giovanni.peccati@uni.lu}
\keywords{Central Limit Theorem; Combinatorial Dimension; Cheeger Inequalities; Connectivity; Graphs; Homogeneous Sum; Hypergraphs; Irreducibility;     Polynomial Chaos}
\subjclass{60F05; 60C05; {05C40}}
\begin{document}
	
\maketitle

\begin{abstract}
    We consider sequences of homogeneous sums based on independent random variables and satisfying a central limit theorem (CLT). We address the following question:
\begin{center}
``{\it In which cases is it \emph{not possible} to reduce such an asymptotic result to the classical \\ Lindeberg--Feller CLT through a restriction of the summation domain}?''
\end{center}
We provide several sufficient conditions for such irreducibility, expressed both in terms of (hyper)graphs Laplace eigenvalues, and of a certain notion of {\it combinatorial dimension}. 
Our analysis combines Cheeger-type inequalities with fourth moment theorems, showing that the irreducibility of a given CLT for homogeneous sums can be naturally encoded by the connectivity properties of the associated sequence of weighted hypergraphs. Several ad-hoc constructions are provided in the special case of quadratic forms.
\end{abstract}

\tableofcontents

\section{Introduction}

\subsection{Overview}
{This paper focuses on \emph{homogeneous sums} --- sometimes called {\em homogeneous polynomials} or {\em polynomial chaoses} --- based on independent random variables, see \eqref{eq:defpolchaos} for a definition. These objects are quintessential examples of {\em degenerate $U$-statistics} \cite{efronstein, Hoeffding, korolyuk, serfling, MandelbaumTaqqu83}, and play a pivotal role e.g. in the construction of \emph{Wiener chaoses} over Gaussian or Poisson random measures \cite{NP12, NuaBook, JansonBook, PecTaqBook, PecReitz}, in the study of directed polymers \cite{CSZ1, CSZ17b, CSZ3, CC22}, and in the analysis of Boolean (and more general) functions on discrete structures \cite{DeServedio1, DeServedio2, Garban2011, GPS10, MOO10, ODonnellBook, LPY}. 
 
\smallskip 

Over the past few decades, several central limit theorems (CLTs) have been established for sequences of homogeneous sums, ranging from the many ramifications of the fundamental contributions by P. de Jong \cite{dJ87, dJ90, dejongRSA ,DPKPTRF, KoikeJOTP, DoblerSPL, DoblerKrokowski} to the {\em universal CLTs} established in \cite{NPR10, NPPS}  (see also \cite[Chapter 11]{NP12}), whose proofs combine the discrete Fourier techniques from \cite{MOO10, R79} with classical {\it fourth moment theorems} on Gaussian Wiener chaoses \cite{NP05, NP12}.

\smallskip 

Our goal in this paper is to investigate some previously unexplored aspects of these results. To this end, let us first recall the {\it universal de Jong CLT} established in \cite[Theorem 1.10]{NPR10}, formally stated in Theorem~\ref{t:unidejong} below:

\smallskip 
{
\begin{quote}
\it
Let $\{Z_n({\bf X}) : n \geq 1\}$ be a sequence of unit variance homogeneous sums, of fixed order $d \geq 2$ and based on a sequence of i.i.d.\ standard Gaussian random variables ${\bf X} = \{X_i\}$. Then, $Z_n({\bf X})$ converges in distribution to a standard normal random variable $N \sim \mathcal{N}(0,1)$ if and only if $\mathbb{E}[Z_n({\bf X})^4] \to \mathbb{E}[N^4] = 3$. If  $Z_n({\bf X})$ converges in distribution to $N$, then this convergence continues to hold if ${\bf X}$ is replaced by an arbitrary sequence of i.i.d.\ random variables $\{Y_i\}$ with mean zero and unit variance.\footnote{ This statement is called a ``de Jong CLT'' since the characterization of a central limit theorem through a fourth-moment condition matches phenomena first identified in \cite{dJ87, dJ90}. We also recall that, since ${\bf X}$ is a Gaussian family, the relation
\begin{equation}\label{e:4thmom}
Z_n({\bf X})\xlongrightarrow{d} N \quad \Longleftrightarrow \quad \mathbb{E}[Z_n({\bf X})^4] \to 3
\end{equation}
(where $\xlongrightarrow{d}$ indicates convergence in distribution of random variables) follows from \cite{NP05}, together with the fact that each $Z_n({\bf X})$ lies in the $d$th Wiener chaos associated with ${\bf X}$.}
\end{quote}
}

\smallskip 

The results from \cite{NPR10} have led to notable applications to directed polymers \cite{CSZ1, CSZ17b}, as well as to Salem–Zygmund CLTs for random polynomials \cite{AngstPolyEJP}, and to spectral fluctuations of non-Hermitian Gaussian random matrices \cite{NouPecALEA}; see also \cite{DeyaNourdinBernoulli, KoikeJOTP, SimoneALEA}. 

An intriguing observation made recently in \cite{CC22} is that, in many cases relevant to the study of directed polymers and stochastic PDEs, the convergence in law of $Z_n({\bf X})$ to a Gaussian distribution can be deduced directly from the classical Lindeberg-Feller CLT (see, e.g., \cite[Theorem 4.12]{kallengerg}) by restricting the summation domain of $Z_n({\bf X})$ in such a way that \emph{the homogeneous sum becomes a sum of independent random elements, up to a negligible remainder}. In this way, one can bypass the need to estimate higher-order moments or cumulants, such as those of order four. Throughout this paper, we will refer to such a simplified scenario as that of a {\it reducible CLT}.

\smallskip 

The goal of our work is to understand to what extent this simplification applies, and in which cases it fails to hold. In particular, our results address the following question:

\begin{enumerate}[label=\textbf{(Q1)}]
\item \label{q}  \begin{center}
\emph{Assume that the homogeneous sums $\{Z_n({\bf X})\}$ verify a CLT.\
Under which conditions is such a CLT {\em irreducible}, in the sense that it cannot be deduced from the classical Lindeberg--Feller theorem through a restriction of the summation domain?}
\end{center}
\end{enumerate}

Motivated by Question~\ref{q}, we begin by introducing a rigorous notion of \emph{(ir)reducibility} for homogeneous sums (see Definition~\ref{def:irreducibility}). We then establish sufficient conditions for irreducibility and construct explicit families of sequences of homogeneous sums that satisfy \emph{irreducible CLTs}---that is, they converge in distribution to a Gaussian limit, yet this convergence cannot be deduced by restricting the summation domain and applying the classical Lindeberg--Feller theorem.

\smallskip 

The examples developed in this work are closely tied to the combinatorial structure of the summation sets $E_n$ associated with the homogeneous sums $Z_n({\bf X})$, with the property of irreducibility naturally arising from the connectivity features of $E_n$. Our main contributions, discussed
in Section~\ref{sec:generalsett}, rely in particular on tools from graph theory, including {\it Cheeger-type inequalities} \cite{Che71, AM85, Alo86, SJ89, LGT14} and their extensions to hypergraphs \cite{B21, SSPHypergraphs}. We also present examples grounded in the concepts of {\it combinatorial dimension} and {\it fractional Cartesian products} \cite{blei1979,BleiSRandom, bleibook, Blei2011Survey, BleiPeresScchmerl, bleijanson, DPKPTRF, NPR10}. The inherent challenges involved in establishing {\em necessary conditions} for irreducibility are discussed in Subsection~\ref{ss:further}.

\smallskip

As illustrated below, our results complement and refine the recent characterizations of the asymptotic behaviour of homogeneous sums derived in \cite{BDMM22, HeMaPolyLaw}, and provide a novel perspective on the {\it fourth moment phenomenon} for chaotic random variables \cite{NP05, NP12, NouPecSwanJFA}. We observe that fourth moment theorems have recently played a crucial role in the derivation of CLTs for level sets of Gaussian waves \cite{NouPecRossi19, DierickxNPRCMP, PecVidottoJMP}, and have non-trivial counterparts in a noncommutative setting \cite{KNPSaop, DeyaNourdinCMP}. See also \href{https://sites.google.com/site/malliavinstein/home}{I. Nourdin's dedicated webpage} for a comprehensive list of references around the fourth moment phenomenon.

\begin{remark}[Spectral vs. domain reducibility for quadratic forms]{\rm 
The asymptotic normality of homogeneous sums of order~$2$ based on Gaussian variables can \emph{always} be inferred from the Lindeberg--Feller theorem, via a spectral decomposition. Indeed,
for a sequence $\{Z_n :  n \geq 1 \}$ in the {\it second Wiener chaos} of a separable Gaussian field \footnote{Recall that homogeneous sums of order 2 based on a i.i.d.\ standard Gaussian family ${\bf X}$ are typical elements of the second Wiener chaos associated with ${\bf X}$ (see \cite[Sections 2.2 and 2.7.4]{NP12}).}, the following holds (see, e.g., \cite[Proposition 2.7.11]{NP12}):
\begin{enumerate}[label=(\roman*)]
\item Each $Z_n$ admits a spectral representation of the following form,
for  $N\in \mathbb{N}\cup \{+ \infty\}$:
\begin{equation}\label{e:chi2}
   Z_n = \sum_{j=1}^N \gamma_{j,n}\, \xi_{j,n}, 
\end{equation}
where $\{\xi_{j,n}\}$ are i.i.d.\ centered chi-square random variables with one degree of freedom and $\{\gamma_{j,n}\}$ are the eigenvalues of a suitable Hilbert--Schmidt operator.

\item A direct application of the Lindeberg--Feller CLT yields the following equivalence: if ${\bf Var}(Z_n)\to 1$, then
\begin{equation}\label{e:maxgen}
Z_n \stackrel{d}{\longrightarrow} N \sim \mathcal{N}(0,1)
\quad \Longleftrightarrow \quad
\max_{j \geq 1} |\gamma_{j,n}| \to 0.
\end{equation}
\end{enumerate}
The kind of {reducibility} studied in the present paper is of a stronger nature: we are interested in understanding whether a CLT for $\{Z_n\}$ can be deduced by the Lindeberg--Feller theorem via a \emph{restriction of the summation domain}, rather than through implicit spectral conditions.
 As we shall demonstrate, the spectral criterion \eqref{e:maxgen} does \emph{not always} imply reducibility in our domain-based sense. For a direct application of criterion \eqref{e:maxgen} to Gaussian quadratic forms with $0$--$1$ coefficients, see Remark~\ref{r:afterdejong}-\ref{item:rempoint2}. To the best of our knowledge, no analogue of the representation \eqref{e:chi2} exists for elements of Wiener chaoses of order $d \geq 3$. As a result, no systematic ``spectral'' reduction to the Lindeberg--Feller theorem is available for homogeneous sums of higher order; see also \cite{HeMaPolyLaw}.
}
\end{remark}%

\begin{remark}[Reducibility and $U$-statistics]{\rm The notions of reducibility and irreducibility introduced in Definition~\ref{def:irreducibility} below extend canonically from homogeneous sums to the broader class of degenerate $U$-statistics \cite{dJ90, serfling, DPejp, DPKPTRF, efronstein, korolyuk}. This observation suggests a natural generalization of Question~\ref{q}: one could ask which sequences of degenerate $U$-statistics satisfy a CLT that cannot be reduced to the Lindeberg–Feller framework. We view this as a distinct research direction, which we leave open for future investigation.
}\end{remark}
}

\subsection{Organization of the paper/tables}

In Section~\ref{sec:generalsett}, we outline our general setting, introduce a rigorous definition of (ir)reducibility (Definition~\ref{def:irreducibility}), and present some of our main results (Theorems~\ref{th:mainSPECTRUMgraph}, \ref{th:irreducombdim}, and~\ref{th:irreducibilityk=2}). We then discuss some of the challenges involved in identifying necessary conditions for irreducibility, see  Subsection~\ref{ss:further}.

\smallskip

In Section~\ref{sec:examples}, we present a wider discussion of our main results, illustrated with some explicit, non-trivial examples.
The remaining sections are devoted to the proof of our results. 

\smallskip

In 
Section~\ref{sec:GRAPHS}, we prove our spectral criteria in the simplest setting of homogeneous sums of order $d=2$, corresponding to \emph{graphs}. After reviewing in Subsection~\ref{ss:pregraph} basic notions, normalized Laplacians, and related connectivity estimates (Cheeger's inequalities),
Subsection~\ref{subsec:graphirreduc} contains the proof of Theorem~\ref{th:mainSPECTRUMgraph}, while Subsections~\ref{subsec:examples} and~\ref{ss:full} are devoted, respectively, to examples and to a strengthened notion of irreducibility.

\smallskip

Our spectral results are then extended in
Section~\ref{sec:Hypergraphs} to homogeneous sums of generic order $d \geq 2$, relying on an extension of Cheeger-type estimates to the setting of weighted hypergraphs (see Proposition~\ref{prop:cheegerhyp} and Theorem \ref{th:mainresulthyper}).

\smallskip

Section~\ref{sec:sparsity} proves and illustrates Theorem~\ref{th:irreducombdim}, and Section~\ref{sec:d=2} offers a detailed analysis of the ad-hoc construction for homogeneous sums of order~2 that appears in Theorem~\ref{th:irreducibilityk=2}. Appendix~\ref{appendix} gathers additional proofs omitted in the main text, while Appendix~\ref{appendixB} collects some preliminaries on Cartesian products of graphs.

\medskip

For the reader’s convenience, all irreducible CLTs discussed in this paper are summarized in {\bf Table~A}, organized by the criterion used to establish irreducibility and the order of the corresponding homogeneous sums. Similarly, {\bf Table B} displays a list of the reducible CLTs analyzed below, complete with their location.

{\small  
\begin{figure}[ht]
\begin{tikzpicture}
\node (table) {
\begin{tabular}{ccc}
\toprule
\textbf{\small Criterion} $\backslash$ \textbf{\small Order} & $d=2$ & $d\geq 3$ \\
\midrule
{\it Spectral} &
\begin{tabular}{c}
{\small Theorem \ref{th:mainSPECTRUMgraph}}, Proposition \ref{p:bella} \\
Examples \ref{item:expanders}, \ref{item:completegraph}-\ref{item:ci}, \ref{item:bipartite}, \\ \ref{item:cartprod(e)}-\ref{item:eii}, \ref{item:cartprod(e)}-\ref{item:eiii} in Section \ref{subsec:examples} 
\end{tabular} &
\begin{tabular}{c}
Theorem \ref{th:mainresulthyper}, 
Example \ref{ex:rooklike}, \\ Example \ref{e:3uni}
\end{tabular} \\
\midrule
{\it Combinatorial dimension} &
\begin{tabular}{c}
Theorem \ref{t:probamethod}
\end{tabular} &
\begin{tabular}{c}
Theorem \ref{th:irreducombdim}, \\
Example \ref{e:fcp},
Proposition \ref{cor:fracprod}
\end{tabular} \\
\midrule
{\it Ad-hoc construction} &
\begin{tabular}{c}
Theorem \ref{th:irreducibilityk=2}, 
Section \ref{sec:d=2}
\end{tabular} &
\begin{tabular}{c}
 ---
\end{tabular} \\
\bottomrule
\end{tabular}
};
\node[below=0.3cm of table] {%
{{\bf Table A}: Irreducible CLTs and where to find them.}
};
\end{tikzpicture}
\end{figure}
}

{\small
\begin{table}[ht]
\centering
\begin{tabular}{cc}
\toprule
\textbf{Reducible CLT} & \textbf{Location} \\
\midrule
{\it Trivial} & Example \ref{e:ridux}-\eqref{it:ridux-trivial} \\
\midrule
{\it Disjoint unions} & Example \ref {e:ridux}-\eqref{it:ridux-rook}  \\
\midrule
{\it Variations of the Rook's graph} & Example \ref{e:ridux}-\eqref{it:ridux-rook} and Section \ref{subsec:examples}-\ref{item:cartprod(e)} \\
\midrule
{\it The (generalized) hypercube} & Example \ref{e:ridux}-\eqref{it:ridux-cube} and Section \ref{subsec:examples}-\ref{item:completegraph}  \\
\bottomrule
\end{tabular}
\caption*{\textbf{Table B}: Reducible CLTs.}
\end{table}
}

\medskip

From now on, every random object is assumed to be defined on a common probability space $(\Omega, \mathcal{F}, \mathbb{P})$, with $\mathbb{E}$ denoting an expectation with respect to $\mathbb{P}$.

\subsection{Acknowledgments} This research is supported by the Luxembourg National Research Fund (AFR/22/17170047/Bilateral-GRAALS). The first two authors acknowledge the support of INdAM/GNAMPA.

\subsection{Data availability statement} Data sharing is not applicable to this article as no datasets were generated or analyzed during the current study.

\section{General setting and main results}\label{sec:generalsett}
\subsection{Preliminaries}\label{ss:prelim}
{
 We consider a sequence of nonempty finite sets $\{V_n : n\geq 1\}$ with the property that 
\begin{equation}\label{eq:propV}
\quad  |V_n | 
\to \infty,\quad \mbox{as } \,n \to \infty.
\end{equation}
We define $V := \bigcup_n V_n$ and let ${\bf X} = \{X_v : v\in V\}$ be a collection of i.i.d. standard normal random variables. For a fixed $d\geq 2$, we let $\{E_n : n\geq 1\}$ denote a sequence of sets such that
\begin{enumerate} 
\item \label{item:VE1} $$E_n\subset \underbrace{ \, V_n\times \cdots \times V_n \, }_\text{$d$ times}\,, \,\, n\geq 1,\quad \mbox{and}\quad |E_n|\to \infty;$$

\item \label{item:VE2} each $E_n$ is \emph{symmetric}: if $(v_1,...,v_d)\in E_n$, then $(v_{\sigma(1)},...,v_{\sigma(d)} )\in E_n$ for all permutations $\sigma$ of $[d]:=\{1,\ldots,d\}$;

\item \label{item:VE3} each $E_n$ is \emph{non-diagonal}: if $(v_1,...,v_d)\in E_n$, then $v_i\neq v_j$ for all $i\neq j$.

\end{enumerate}

\smallskip

\noindent For every $n\geq 1$, we also consider a mapping $$q_n : E_n\to \mathbb{R} : (v_1,...,v_d) \mapsto q_n(v_1,...,v_d),$$ that we assume to satisfy the following properties:
\begin{enumerate}[label=(\alph*)]
    \item $q_n$ is symmetric: $q_n(v_1,...,v_d) =q_n(v_{\sigma(1)},...,v_{\sigma(d)}) $ for every permutation $\sigma$ of $[d]$;
    \item as $n\to \infty$,
\begin{equation}\label{e:varinfty}
\sum_{(v_1,...,v_d)\in E_n} q_n(v_1,...,v_d)^2 :=\| q_n\|^2\to \infty\,.     
\end{equation}
\end{enumerate} 
Plainly, one can identify each $q_n$ with a symmetric mapping on $V_n\times \cdots \times V_n$, with support equal to $E_n$. We will sometimes refer to the quantities $q_n(v_1,...,v_d)$ as the {\em coefficients} of the homogeneous sum, and to the squared coefficients 
\begin{equation}\label{e:weights}
w_n(v_1,...,v_d):= q_n(v_1,...,v_d)^2
\end{equation}
as the {\em weights}.

\medskip
As anticipated, in this work we will focus on sequences $\{Z_n : n\geq 1\}$ of \emph{homogeneous sums} of order $d$, based on the Gaussian family ${\bf X}$ and defined starting from the triplets $(V_n, E_n, q_n)$ introduced above; for every $n\geq 1$, these are defined as follows:
\begin{equation}\label{eq:defpolchaos}
    Z_n = Z_n({\bf X}) :=  \sum_{(v_1,\ldots,v_d)\, \in E_n } q_n(v_1,\ldots,v_d) \, \prod_{i=1}^d X_{v_i} \,.
\end{equation}
One particularly interesting case considered below (studied, for instance, in \cite{BDMM22} when $d = 2$) is that of constant coefficients, $q_n \equiv 1$. In this setting, the distribution of the random variable $Z_n$ is entirely determined by the combinatorial structure of the set $E_n$.

\smallskip

Since the random variables $X_v$'s entering the definition \eqref{eq:defpolchaos} are independent, centered with unit variance, one has trivially that (using the notation \eqref{e:varinfty})
\begin{equation}\label{eq:secmomZn}
    \bbE [ Z_n]=0 \qquad \text{and}\qquad  \bbE [ Z_n^2] =d! \| q_n\|^2 \,,
\end{equation}
and we observe that, if $q_n^2\equiv 1$ (constant weights), the second relation in \eqref{eq:secmomZn} reduces to $\bbE [ Z_n^2] = d!|E_n|$. We are particularly interested in those sequences of homogeneous sums satisfying a {\em central limit theorem} (CLT), that is, such that
\begin{equation}\label{eq:CLT}
    \widetilde{Z}_n =\widetilde{Z}_n({\bf X})  \coloneq \frac{Z_n({\bf X})}{\sqrt{d!\|q_n\|^2}} \, \xlongrightarrow[n\to\infty]{d} \, N\sim  \cN \big(0,1)\,,
\end{equation}
where $\xlongrightarrow{d}$ indicates, as before, convergence in distribution. 

\smallskip

It is by now a classical result (see the Introduction, as well as\cite[Chapter 5 and Chapter 11]{NP12}) that the convergence \eqref{eq:CLT} can be established by a substantial simplification of the method of moments --- via so-called \emph{fourth moment theorems} --- and has moreover a {\em universal nature} that is grounded in the polynomial chaos estimates from \cite{MOO10, R79}. 
For the reader's convenience, we discuss these results in Subsection~\ref{sec:4th} below (see, in particular, Theorem \ref{t:unidejong}).

}
\subsection{Irreducibility}\label{ss:irred}
 { Sometimes the CLT \eqref{eq:CLT} can hold as a direct consequence of the classical \emph{Lindeberg–Feller CLT for triangular arrays} \cite[Theorem 4.12]{kallengerg}. For instance, for $d \geq 1$ and $V = \mathbb{N}$, one easily verifies that the sequence
\begin{equation}\label{eq:exFL}
\tilde{Z}_n := \frac{1}{\sqrt{n}} \sum_{i=0}^{n-1} \prod_{\ell = 1}^d X_{id+\ell} 
= \frac{(X_1 \cdots X_d) + (X_{d+1} \cdots X_{2d}) + \ldots + (X_{(n-1)d + 1} \cdots X_{nd})}{\sqrt{n}}
\end{equation}
with $\{X_i\}$ an i.i.d.\ standard Gaussian family, converges in distribution to a standard Gaussian law as a direct consequence of the usual CLT. As anticipated, the initial impetus for our work comes from reference \cite{CC22}, where the authors identified a class of highly non-trivial examples for which a {\it domain-wise} reduction of Theorem~\ref{t:unidejong} to the Lindeberg--Feller setting remains possible. Their approach, motivated by applications to directed polymers, also extends to superpositions of multiple chaos orders, relies solely on second-moment computations, and provides natural criteria for proving Gaussian fluctuations in contexts such as singular stochastic PDEs \cite{CSZ1}.

\smallskip

As discussed in the Introduction, this paper aims to identify sequences of homogeneous sums whose Gaussian convergence \emph{cannot} be deduced from a Lindeberg--Feller-type central limit theorem via a restriction of the summation domains; we call such sequences \emph{irreducible}.  
Building on \cite[Theorem~2.1]{CC22}, we now provide a rigorous definition of (ir)reducibility adapted to our setting.

\smallskip

Let the notation and assumptions of Section \ref{ss:prelim} prevail; for any subset $B \subset V_n$, we denote by $\sigma^2_n(B)$ the ``contribution of $B$ to the variance'' defined by
\begin{eqnarray}\label{eq:sigmaBdef}
    \sigma^2_n(B) &\coloneq & d! \sum_{v_1,\ldots,v_d \in B} \ind_{E_n}(v_1,\ldots,v_d) q_n(v_1,...,v_d)^2 \\ \notag 
    &=&  d! \!\!\! \sum_{\substack{(v_1,\ldots,v_d)\in \\  E_n\cap (B\times \cdots \times B)}  }\!\!w_n(v_1,...,v_d)  \, ,
\end{eqnarray}
(where we have used the notation \eqref{e:weights}) and observe that, when $w_n\equiv 1$, one has simply that
\begin{equation}\label{eq:sigmaBdef2}
\sigma^2_n(B)= d!\, \big| \, E_n \cap (\underbrace{B \times \cdots \times B}_\text{$d$ times})\, \big| \, . 
\end{equation}

\begin{definition}[Reducibility/Irreducibility]\label{def:irreducibility}
    Fix $d \ge 2$ and consider the sequence of homogeneous sum $\{ Z_n({\bf X}): n \geq 1\} = \{Z_n \} $ defined in \eqref{eq:defpolchaos}.  We say that $\{Z_n\}$ is \emph{reducible} if for any $n \in \N$ there exist subsets (``boxes'') $B_1,\ldots,B_{m_n} \subset V_n$ (where $B_i = B_i{(n)}$ may depend on $n$), such that $B_i \cap B_j = \emptyset$ for $i\ne j$ (that is, the boxes $B_i$ are disjoint) and
\begin{enumerate}[label={\rm (\roman*)}]
    \item \label{redCLTcond1} $m_n \to \infty \,;$
    \item \label{redCLTcond2} $\lim_{n\to\infty}\frac{1}{d!\|q_n\|^2} \sum_{i=1}^{m_n} \sigma_n^2(B_i) = 1\,;$
    \item \label{redCLTcond3} $\lim_{n\to\infty}  \frac{1}{d!\|q_n\|^2} \max_{i=1,\ldots,m_n} \sigma_n^2(B_i) =0 \,.$ 
\end{enumerate}
We say that $\{Z_n\}$ is \emph{irreducible} if it is not possible to find disjoint sets $B_1,\ldots,B_{m_n} \subset V_n$ such that \ref{redCLTcond1}, \ref{redCLTcond2} and \ref{redCLTcond3} hold. If $\{Z_n\}$ is irreducible and verifies \eqref{eq:CLT}, we say $\{Z_n\}$ {\em verifies an irreducible CLT}. \end{definition}

\begin{remark}{\rm
The fact that reducible sequences always verify a CLT was a key observation in \cite{CC22}. For the sake of completeness, the argument is recalled in Subsection~\ref{sec:reducible-CLT} below.}
\end{remark}

We refer to Section~\ref{sec:examples}, see in particular Subsections~\ref{sec:re} and~\ref{subsec:examples}, for a discussion of some explicit but non-trivial examples of reducible and irreducible sequences.
Before proceeding, let us quickly compare the cases with constant vs.\ non-constant weights.

\begin{remark}\label{r:reducible}{\rm 
\begin{enumerate}[label=(\arabic*)]

\item \label{item:point1} ({\it Constant weights}) In the special case of \emph{constant coefficients} $q_n \equiv 1$, or more generally constant weights $q_n^2 \equiv 1$, the conditions for irreducibility are simpler to state. Indeed,
by virtue of \eqref{eq:sigmaBdef2}, Properties~\ref{redCLTcond1}, \ref{redCLTcond2} and~\ref{redCLTcond3} of Definition~\ref{def:irreducibility} can be equivalently expressed in terms of $\{E_n\}$ as follows: as $n\to \infty$,
\begin{enumerate}[label=(\alph*)]
    \item \label{redCLTconda} $m_n \to \infty \,;$
    \item \label{redCLTcondb} $\sum_{i=1}^{m_n} \big| \, E_n \cap (B_i \times \cdots \times B_i)\, \big| \sim |E_n|\,;$
    \item \label{redCLTcondc} $  \max_{i=1,\ldots,m_n} \big| \, E_n \cap (B_i \times \cdots \times B_i)\, \big|  = o\big(|E_n|\big)\,.$ 
\end{enumerate}
By extension, if a sequence $\{E_n\}$ verifies Properties \ref{redCLTconda}---\ref{redCLTcondc}, we say that $\{E_n\}$ is {\em reducible}.

\item ({\it From constant to non-constant weights}) Fix $d\geq 2$, set $q_n\equiv 1$, and consider a sequence $\{E_n\}$ verifying Properties (\ref{item:VE1})---\eqref{item:VE3}, stated at the beginning of Section \ref{ss:prelim}. Consider symmetric non-diagonal mappings $q_n : E_n \to \R : (v_1,...,v_d) \mapsto q_n(v_1,..., v_d)$ such that, for some $0<\epsilon < \eta <\infty$, one has that
$$
\epsilon \leq  \big| q_n(v_1,...,v_d)\big| \leq \eta, \quad (v_1,...,v_d)\in E_n, \,\, n\geq 1.
$$
Then, it is easily seen that the sequence $\{E_n\}$ verifies Properties \ref{redCLTconda}---\ref{redCLTcondc} of Point \ref{item:point1} if and only if the sequence $\{Z_n\}$ defined in \eqref{eq:defpolchaos} is irreducible in the sense of Definition \ref{def:irreducibility}. In view of this transfer principle, and despite a moderate loss of generality, we have chosen to center our analysis on the constant coefficient case throughout most of the paper—with the notable exception of Section~\ref{sec:Hypergraphs}.

\end{enumerate}
}
\end{remark}

In line with our initial objective, this paper aims to characterize those sequences $\{Z_n\}$ as in \eqref{eq:defpolchaos} that satisfy the CLT \eqref{eq:CLT}, but for which a domain-wise reduction is not possible. Our main contributions, presented in the forthcoming Subsections~\ref{subsec:irredspectra} and~\ref{subsec:sparsity}, revolve around two distinct sets of techniques:

\begin{enumerate}[label=(\Roman*)]

\item \underline{\it Spectral graph theory}. In the next Subsection~\ref{subsec:irredspectra}, we will identify homogeneous sums associated with (hyper)graphs (possibly weighted) and use notable spectral estimates associated with their {\em Laplacian} --- generally known as {\it Cheeger-type inequalities} \cite{Che71} --- in order to \emph{characterize their irreducibility}. The proof of these results are presented in Sections~\ref{sec:GRAPHS} and~\ref{sec:Hypergraphs}. Our main references for spectral graph theory and Cheeger inequalities are the outstanding lecture notes by Luca Trevisan \cite{trevisanLN}, as well as the classical texts \cite{BrHaBook, ChungSpectralGraph, GodsilRoy}; see also \cite{aroraFlowEmbedding, LuboSurvey, hlw, LGT14, Vempala12}. The main tools allowing us to gauge the connectivity of hypergraphs are substantially inspired by the theory developed in \cite{B21, SSPHypergraphs}.

\smallskip

\item \underline{\it Combinatorial dimensions and sparsity}. In Subsection~\ref{subsec:sparsity},
we we will exploit the notion of \emph{combinatorial dimension} (following the framework developed in \cite{bleibook, bleijanson}, see Definition~\ref{d:cdm}) in order to derive a \emph{general irreducibility} criterion for homogeneous sums whose support sets $\{E_n\}$ exhibit a \emph{sparse structure}. The proof is presented in Section~\ref{sec:sparsity}. These constructions build on ideas from \cite{blei1979,BleiSRandom, Blei2011Survey, BleiPeresScchmerl, DPKPTRF, NPR10}, and represent a departure from the spectral approaches. In the case $d=2$, where no canonical notion of fractional product exists \cite{BleiSRandom, BleiPeresScchmerl}, we study in Section~\ref{sec:d=2} an explicit sparse construction yielding irreducible homogeneous sums (Theorem~\ref{th:irreducibilityk=2}) that verify a CLT. 

\bigskip
    
\end{enumerate}

}

\needspace{3\baselineskip}
\subsection{Main results: spectral conditions}
\label{subsec:irredspectra}

We present our main results linking irreducibility to spectral properties. For ease of exposition, we first consider the simpler setting of \emph{graphs}, then we discuss the case of \emph{hypergraphs}.

{
\subsubsection{Graphs}\label{ss:graphsresults} Let us consider the case of homogeneous sums of order $d=2$ with $q_n \equiv 1$, in such a way that $Z_n$ in \eqref{eq:defpolchaos} boils down to a quadratic form with $0$-$1$ coefficients: 
\begin{equation}\label{eq:bilformsubsec}
    Z_n = \sum_{v_1,v_2 \in V_n} \ind_{E_n}(v_1,v_2)\, X_{v_1} X_{v_2}\,,
\end{equation}
where $\{E_n\}$ satisfy the requirements \eqref{item:VE1}--\eqref{item:VE3}, as stated at the beginning of Section \ref{ss:prelim}. 

\smallskip

We associate with $Z_n$ in \eqref{eq:bilformsubsec} the finite undirected graph $G_n = (V_n, \mathcal{E}_n)$ with adjacency matrix $A_n(v,w)\coloneq \ind_{E_n}(v,w)$, that is $\mathcal{E}_n$ consists of unordered pairs $\{v,w\}$ with $(v,w)\in E_n$. Letting $\{v_1,...,v_{N_n}\}$ be an enumeration of $V_n$, we let $D_n$ be the diagonal matrix with entries $d(v_1),\ldots,d(v_{N_n})$, i.e.\ the degrees of each vertex. We also consider the {\em normalized Laplacian} matrix $\mathcal{L}_n\coloneq I_n - D_n^{-1/2}A_nD_n^{-1/2}$ associated with $G_n$, whose eigenvalues are noted $0 = \mu_1^{(n)}\le \cdots \le \mu_{N_n}^{(n)}\leq 2$. 

\smallskip

\begin{convention}\label{conv;simple}\rm
Unless otherwise specified, every graph considered in the present and forthcoming sections is implicitly assumed to be undirected, simple (i.e.\ containing neither loops nor multiple edges), and with no isolated vertices.
\end{convention}

\smallskip 

 \begin{definition}\label{def:irreducibilegraphs}{\rm Let $G_n = (V_n, \mathcal{E}_n)$, $n\geq 1$, be a sequence of graphs such that $|V_n|, |\cE_n|\to \infty$, and write $A_n$ for the adjacency matrix of $G_n$. Consider the sequence $\{Z_n\}$ defined as in \eqref{eq:bilformsubsec} by identifying $A_n$ with the indicator $\ind_{E_n}$ of a symmetric and non-diagonal set $E_n \subset V_n\times V_n$. We refer to $\{Z_n\}$ as the {\em sequence of homogeneous sums generated by} $\{G_n\}$. We say that $\{G_n\}$ {\em generates an irreducible CLT}, if $\{Z_n\}$ verifies an irreducible CLT in the sense of Definition \ref{def:irreducibility} --- that is, if $\{Z_n\}$ is irreducible and 
 \begin{equation}\label{e:normadue}
\tilde{Z}_n:= \frac{Z_n}{\sqrt{2|E_n|}} \xlongrightarrow{d} \, N\sim  \cN \big(0,1).
\end{equation}
By extension, if the sequence $\{Z_n\}$ is reducible, we will say that $\{G_n\}$ {\em is reducible} or, more precisely, that $\{G_n\}$ {\em generates a reducible CLT}. Similar notions for hypergraphs and associated sequences of homogeneous sums will be introduced in Definition \ref{def:irreducibileHG}.
 }
\end{definition}
}

As discussed in Section~\ref{sec:GRAPHS}, it is a classical fact that, for $k \geq 2$, the eigenvalue $\mu_k^{(n)}$ is closely related to the connectivity properties of the graph $G_n$ generating $Z_n$, notably through (multiway) Cheeger-type inequalities \cite{trevisanLN, LGT14, Vempala12}. Our main result in this setting, proved in Section~\ref{sec:GRAPHS}, shows that these connectivity properties can be naturally related to the notions of reducibility and irreducibility introduced in Definition~\ref{def:irreducibility}.

\begin{theorem}\label{th:mainSPECTRUMgraph}
    Let $Z_n$ be as in \eqref{eq:bilformsubsec}, and let $G_n=(V_n,\mathcal{E}_n)$ be its associated graph. Suppose that there exists $k\ge 2$ such that, as $n\to \infty$,
\begin{equation}\label{e:mucca}
\liminf_n \mu_k^{(n)} >0.
\end{equation}
Then $\{Z_n\}$ is irreducible, in the sense of Definition \ref{def:irreducibility}. 
\end{theorem}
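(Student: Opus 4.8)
The plan is to argue by contradiction: suppose $\{Z_n\}$ is reducible, so that for each $n$ there exist disjoint boxes $B_1,\ldots,B_{m_n}\subset V_n$ satisfying conditions \ref{redCLTcond1}--\ref{redCLTcond3} of Definition~\ref{def:irreducibility}, and derive a violation of \eqref{e:mucca}. Since $q_n\equiv 1$, by Remark~\ref{r:reducible}\ref{item:point1} these conditions say that $\sum_{i=1}^{m_n}|E_n\cap(B_i\times B_i)|\sim |E_n|$, that $\max_i |E_n\cap(B_i\times B_i)| = o(|E_n|)$, and that $m_n\to\infty$. The key idea is that the boxes $B_1,\ldots,B_{m_n}$ induce a partition of $V_n$ (after possibly adding one extra box $B_0 := V_n\setminus\bigcup_i B_i$ collecting the leftover vertices) which is an ``almost-disconnecting'' partition: most of the edge weight of $G_n$ lies \emph{inside} the boxes, so the total weight of edges \emph{between} distinct boxes is $o(|E_n|)$, i.e.\ $o$ of the total edge count. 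Such a partition into many parts, each carrying a vanishing fraction of the volume and with negligible boundary, is precisely the configuration that forces several small normalized-Laplacian eigenvalues to be close to $0$, contradicting \eqref{e:mucca}.

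\textbf{Key steps.} First I would set up the relevant graph-theoretic quantities reviewed in Subsection~\ref{ss:pregraph}: the volume $\mathrm{vol}(B)=\sum_{v\in B}d(v)$, the edge boundary $|\partial B|$, and the total volume $\mathrm{vol}(V_n)=2|\cE_n|$. Note that $\sigma^2_n(B_i)/2 = |\cE_n\cap\binom{B_i}{2}|$ is the number of edges inside $B_i$, so condition \ref{redCLTcond2} gives $\sum_i (\text{edges inside }B_i)\sim |\cE_n|$, hence the number of edges with endpoints in two different boxes (or in $B_0$) is $o(|\cE_n|)$. Second, I would use the standard variational (Courant--Fischer) characterization of $\mu_k^{(n)}$: if one can produce $k$ functions on $V_n$ that are orthogonal in the $D_n$-inner product and all have small Rayleigh quotient $\mathcal{R}(f)=\frac{\sum_{\{v,w\}\in\cE_n}(f(v)-f(w))^2}{\sum_v d(v)f(v)^2}$, then $\mu_k^{(n)}$ is bounded by the largest of those quotients. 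Third — the heart of the argument — I would select $k$ boxes among $B_1,\ldots,B_{m_n}$ (possible since $m_n\to\infty\ge k$), take the indicator functions $\ind_{B_{i_1}},\ldots,\ind_{B_{i_k}}$ (which are automatically $D_n$-orthogonal because the boxes are disjoint), and estimate each Rayleigh quotient: the numerator for $\ind_{B_{i_j}}$ equals $|\partial B_{i_j}|$, the number of edges leaving $B_{i_j}$, which is at most the total number of ``cross'' edges $=o(|\cE_n|)$; the denominator equals $\mathrm{vol}(B_{i_j})$. The subtlety is that a single box could have small volume, making the quotient large even with a small boundary. To handle this I would argue that one can \emph{choose} the $k$ boxes to have the largest volumes among the $B_i$'s — or, more robustly, pass to a slightly coarser partition by \emph{merging} boxes into $k$ groups of roughly comparable, non-vanishing volume (each group still has $o(|\cE_n|)$ boundary since boundaries only shrink under merging), so that each of the $k$ test functions $\ind_{(\text{group }j)}$ has Rayleigh quotient $\le o(|\cE_n|)/\Theta(|\cE_n|) = o(1)$. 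This yields $\limsup_n \mu_k^{(n)} = 0$, contradicting \eqref{e:mucca}, and completes the proof.

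\textbf{Main obstacle.} The delicate point is the volume-balancing in the third step: reducibility controls the number of \emph{edges} inside and between boxes, but the Rayleigh quotient of an indicator $\ind_{B}$ involves $\mathrm{vol}(B)$ in the denominator, and a priori some boxes could be tiny in volume while still carrying a non-negligible fraction of the internal edges only in aggregate. One has to show that after discarding/merging one can extract $k$ parts each of volume $\asymp |\cE_n|$ whose pairwise boundary is still $o(|\cE_n|)$. This is a combinatorial bookkeeping argument — iteratively merge the lowest-volume current part into another until $k$ parts remain, tracking that the boundary edge count never exceeds the fixed bound $o(|\cE_n|)$ of cross edges and that the total volume $2|\cE_n|$ is split into $k$ pieces, at least one of which (and with care, all after a final rebalancing) has volume $\gtrsim |\cE_n|/k$ — and it is the only place where conditions \ref{redCLTcond1} and \ref{redCLTcond3} are genuinely used together. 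Everything else is the routine translation between the probabilistic language of Definition~\ref{def:irreducibility} and the spectral language of normalized Laplacians, plus the elementary Courant--Fischer bound; the Cheeger inequalities themselves are not needed for this direction (they would be needed for a converse).
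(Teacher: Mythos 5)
Your argument is correct, and it rests on exactly the two consequences of reducibility that drive the paper's proof — after completing the boxes to a partition of $V_n$ (cf.\ Lemma~\ref{def:irreducibility22}), the total weight of cross edges is $o(|\cE_n|)$, and every box occupies an $o(1)$ fraction of the volume, since $\mathrm{vol}(B_i)=2E(B_i,B_i)+E(B_i,\widebar{B_i})=o(|\cE_n|)$ uniformly in $i$ — but you organize the final spectral step differently. The paper never merges boxes: it sorts them by edge expansion $\phi(B_1)\le\cdots\le\phi(B_{m_n})$ and invokes a weighted-average form of the easy multiway Cheeger bound (Propositions~\ref{p:easycheeger} and~\ref{prop:simple}), namely $\mu_k^{(n)}/2$ is at most the volume-weighted average of $\phi(B_i)$ over $i\ge k$; the numerator of that average is $o(1)$ by the cross-edge bound, and the denominator tends to $1$ because discarding the $k-1$ boxes of smallest expansion removes only an $o(1)$ fraction of the total volume. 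You instead regroup the boxes into $k$ volume-balanced super-boxes, each of volume $\asymp|\cE_n|$ and boundary at most the total cross-edge count, and test the normalized Laplacian against their indicators. Both routes are instances of the same mechanism (indicators of disjoint sets as test functions, i.e.\ the easy Cheeger direction); the paper's version avoids all rebalancing bookkeeping, while yours makes more visible why $k$ eigenvalues must collapse simultaneously.

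Three points to tighten, none fatal. First, for $k$ disjointly supported indicators the min--max principle does not bound $\mu_k^{(n)}$ by the largest individual Rayleigh quotient: one must control the quotient over the whole span, and the cross terms cost the usual factor $2$ (this is precisely Proposition~\ref{p:easycheeger}); harmless here since everything is $o(1)$. Second, your first suggested fix — taking the $k$ boxes of largest volume — does not work on its own: the largest box may still carry an $o(1)$ volume fraction while having expansion bounded away from $0$, because reducibility only forces the volume-weighted \emph{average} of the expansions to be small; it is the merging variant that you should retain. Third, the balancing should be spelled out: since each box has $o(1)$ volume fraction (this uses \ref{redCLTcond3} for internal edges together with the $o(|\cE_n|)$ cross-edge bound coming from \ref{redCLTcond2}), assigning boxes greedily to whichever of $k$ bins is currently least loaded produces groups of volume $\tfrac{2|\cE_n|}{k}+o(|\cE_n|)$ each, and boundaries can only decrease under merging, so every group has expansion $o(1)$, yielding $\mu_k^{(n)}=o(1)$ and the desired contradiction with \eqref{e:mucca}.
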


A large collection of irreducible CLTs for quadratic forms of the type \eqref{eq:bilformsubsec} --- spanning in particular sequences $\{Z_n\}$ generated by {\it expanders} and by generic Cartesian products of regular graphs --- is discussed in Subsection~\ref{subsec:examples}.

\begin{remark}[Full irreducibility]
{\rm Theorem~\ref{th:mainSPECTRUMgraph} implies a stronger form of irreducibility, that we call \emph{full irreducibility}, see Subsection~\ref{ss:full}.}
\end{remark}

\smallskip

\subsubsection{Hypergraphs} 
We next consider generic homogeneous sums of order $d \ge 2$, as defined in \eqref{eq:defpolchaos} (where the real coefficients $q_n(v_1,\ldots,v_d)$ are arbitrary). In Section~\ref{sec:Hypergraphs}, we will show that the content of Theorem \ref{th:mainSPECTRUMgraph} can be extended to this general setting. To this end, we associate with each $Z_n$ a finite \emph{weighted hypergraph} $\mathcal{G}_n=(V_n, \mathcal{E}_n,w_n)$. Here, each \emph{hyperedge} $ e = \{ v_1,\ldots, v_d \}\in \mathcal{E}_n$ is a subset of the vertices $V_n$ having cardinality $d$. Each hyperedge is also paired with the weight $w_n(v_1,\ldots,v_d) \coloneq q_n(v_1,\ldots,v_d)^2$ introduced above. In this context, it is therefore natural to introduce a general definition of the normalized Laplacian $\mathcal{L}_n$ associated with $\mathcal{G}_n$ and obtain an analogous version of Theorem \ref{th:mainSPECTRUMgraph} (see the discussion contained in Section \ref{sec:Hypergraphs}, as well as Theorem \ref{th:mainresulthyper} therein).

We refer to Section~\ref{sec:Hypergraphs} for a precise statement of our results.
As anticipated, we rely on an extension of Cheeger-type estimates for graphs, directly inspired by \cite{B21, SSPHypergraphs}.

\subsection{Main results: sparsity conditions}\label{subsec:sparsity}

We next present our main results which link irreducibility to a notion of sparsity, encoded by the concept of \emph{combinatorial dimension}.

\subsubsection{General results}
{
The following definition --- which was originally motivated by problems in harmonic analysis \cite{bleiharmonic2, bleiHarmonic1} --- is lifted from \cite{blei1979, bleibook, bleijanson} and is meant to characterize those subsets of Cartesian products displaying a certain form of \emph{sparsity}, resulting in the fact that they behave, in a precise sense, like Cartesian products of lower (and possibly non-integer) order.

\begin{definition}[Combinatorial dimension]\label{d:cdm} {\rm Let $\{V_n\}_{n\ge 1}$ be sets with $|V_n| \to \infty$. Fix an integer $d\geq 2$, as well as a real number $\alpha\in (0,d]$. We consider subsets $$ J_n\subseteq \underbrace{ \, V_n\times \cdots \times V_n \, }_\text{$d$ times}, \quad n\geq 1,$$ (each $J_n$ is not necessarily symmetric and has possibly diagonal components) and say that the sequence $\{J_n\}$ has \emph{combinatorial dimension} equal to $\alpha$ if there exist finite constants $0<c'\leq c$ such that the following two properties are verified for $n$ sufficiently large: 
\begin{itemize}
    \item for all $A_1,...,A_d\subseteq V_n$,
\begin{equation}\label{e:req1}
| J_n \cap (A_1\times \cdots \times A_d)| \leq c \, \max_{i=1,...,d} |A_i|^\alpha\,;
\end{equation}
\item moreover, 
\begin{equation}\label{e:req2}
| J_n | \geq c' \, |V_n |^\alpha\,, \quad n\geq 1.
\end{equation}

\end{itemize}
}
\end{definition}

\begin{remark}{\rm 
\begin{enumerate}[label=(\arabic*)]

\item For $d\geq 2$, the existence of sequences $\{J_n\}$ with arbitrary combinatorial dimension $\alpha \in [1,d]$ is established in \cite{BleiPeresScchmerl, BleiSRandom} through a random construction (see Subsection~\ref{sss:randomconstructions} for details).

\item If $\{J_n\}$ has combinatorial dimension $\alpha>0$, then as $n\to\infty$
\begin{equation}\label{e:alphasymp}
|J_n|\asymp |V_n|^{\alpha}, \qquad\text{i.e.}\qquad
c_1 \, |V_n|^{\alpha} \le |J_n| \le c_2 \, |V_n|^{\alpha} 
\end{equation}
for some $0 < c_1 \le c_2 < \infty$.
In particular, when $\alpha <d$, then $|J_n| = o(|V_n|^d)$; more generally, for every $\{A^{(n)}\}$ such that $A^{(n)}\subset V_n $ and $|A^{(n)}|\to \infty$, one has that
$$
    | J_n  \cap (A^{(n)}\times \cdots \times A^{(n)})|\le c\, |A^{(n)}|^{\alpha} = o\big(|A^{(n)}|^d\big).
$$
Such an estimate aligns with our heuristic characterisation of non-trivial combinatorial dimensions as signatures of sparsity.

\item The combinatorial dimension of a given sequence of sets $\{J_n\}$ is not always well-defined, even when the cardinality $|J_n|$ grows as some power of $|V_n|$. To see this, consider the case where $J_n = W_n^d$, with $W_n\subset V_n$ such that $|W_n|\asymp |V_n|^\beta$ for some $\beta<1$. Then $|J_n|= |J_n \cap (W_n \times \cdots \times W_n)| = |W_n|^d \asymp |V_n|^{\beta d}$ and these relations are not compatible with both requirements \eqref{e:req1} and \eqref{e:req2} (which would entail respectively $d \le \alpha$ and $\alpha \le \beta d$).

\end{enumerate}
}   
\end{remark}

Let us provide some basic examples.

\begin{example}\label{ex:fractional}{\rm
\begin{enumerate}[label=(\alph*)]
\item ({\it Full Cartesian products}) The sequence $J_n = V_n\times\cdots \times V_n$, $\,n\geq 1$, has a trivial full combinatorial dimension equal to $d$, since conditions
\eqref{e:req1} and \eqref{e:req2} are satisfied with $c=c' = 1$. 

\item ({\it Sets with diagonal restrictions}) Fix $d\geq 2$ and consider a partition $\pi = \{b_1,...,b_{|\pi|}\}$ of $[d]$, where $|\pi|\in [d]$ is the number of blocks of $\pi$. For every $n\geq 1$, we define $V_n^\pi $ to be the subset of the $d$-th Cartesian product of $V_n$ composed of those vectors $(v_1,...,v_d)$ such that $v_i = v_j$ if and only if $i$ and $j$ belong to the same block $b$ of $\pi$. One easily sees that the sequence $\{V_n^\pi\}$ has combinatorial dimension $|\pi|$. Two straightforward cases correspond to the choices $\pi = \{[d]\}$ (fully diagonal set, in which case the combinatorial dimension is 1) and $\pi = \{\{1\},..., \{d\}\}$ (fully non-diagonal set, in which case the combinatorial dimension is $d$). It is readily seen that, when $|\pi| < d$, the sequence $\{V_n^\pi\}$ {\em cannot} satisfy both Properties \eqref{item:VE2} (symmetry) and \eqref{item:VE3} (non-diagonal structure) from the beginning of Section~\ref{ss:prelim}. This prevents the use of such sets in constructing examples of irreducible CLTs.

\item \label{item:regulargraphs} ({\it Regular graphs}) For $d=2$, we consider a sequence $\{E_n\}$ satisfying the properties \eqref{item:VE1}--\eqref{item:VE3} listed at the beginning of Section \ref{ss:prelim}. We also fix $m\geq 1$, and assume that the mappings $(v,w)\mapsto \ind_{E_n}(v,w)$, $v,w\in V_n$, correspond to the adjacency matrices of a sequence of $m$-regular graphs. Then, one has that $|E_n| = m |V_n|$, and a direct application of the {\em expander mixing lemma} (see e.g. \cite[Lemma 2.5]{hlw}) yields that, for all $n\geq 1$ and all $A_1,A_2\subseteq V_n$,
$$
|E_n\cap (A_1\times A_2)|\leq O(m) \max\{|A_1|, |A_2|\},
$$
implying that the sequence $\{E_n\}$ has combinatorial dimension 1.

\end{enumerate}

}
\end{example}

\medskip

The following result provides a sufficient condition for the irreducibility of a symmetric and non-diagonal sequence $\{E_n\}$, formulated in terms of its combinatorial dimension. The second part of the statement also establishes the existence of infinitely many non-trivial sequences $\{Z_n\}$ of homogeneous sums of order $d \geq 3$ that satisfy an irreducible CLT. (The case $d=2$ will be studied in Theorem~\ref{t:probamethod} through the use of a random construction.)

\begin{theorem}
    \label{th:irreducombdim}
    For $d \ge 2$, let $\{E_n\}$ satisfy Properties \eqref{item:VE1}---\eqref{item:VE3}, as listed at the beginning of Section \ref{ss:prelim}. 
    \begin{enumerate}[label={\rm (\alph*)}]
    \item \label{item:partA} Assume that $\{E_n\}$ has combinatorial dimension 
    $$1< \alpha \leq  d\,.$$
    Then, $\{E_n\}$ is irreducible --- recall Remark~\ref{r:reducible}-{\rm \ref{item:point1}}. 
    \item \label{item:partB} For every $d \ge 3$ and $b=2,\ldots,d-1$, there exists a sequence $\{E_n\}$ with combinatorial dimension $$\alpha=\frac{d}{b} \in (\,1,d\,)\,,$$ such that the corresponding sequence $\{Z_n\}$ of homogeneous sums, defined as in \eqref{eq:defpolchaos} for $q_n\equiv 1$, satisfies an irreducible CLT.

    \end{enumerate}
\end{theorem}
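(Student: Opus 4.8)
\textbf{Proof plan for Theorem~\ref{th:irreducombdim}.}

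\emph{Part \ref{item:partA}.} The plan is to argue by contradiction: suppose $\{E_n\}$ is reducible, so that for every $n$ there are disjoint boxes $B_1,\dots,B_{m_n}\subset V_n$ satisfying properties \ref{redCLTconda}--\ref{redCLTcondc} of Remark~\ref{r:reducible}-\ref{item:point1}. The key observation is that the combinatorial-dimension bound \eqref{e:req1} applies to each box individually: taking $A_1=\cdots=A_d=B_i$ gives $|E_n\cap(B_i\times\cdots\times B_i)|\le c\,|B_i|^\alpha$ for $n$ large. Summing over $i$ and using that the $B_i$ are disjoint (so $\sum_i |B_i|\le |V_n|$), together with the elementary inequality $\sum_i |B_i|^\alpha \le \big(\max_i |B_i|\big)^{\alpha-1}\sum_i |B_i| \le \big(\max_i |B_i|\big)^{\alpha-1}\,|V_n|$ (valid since $\alpha>1$), one obtains
\begin{equation*}
\sum_{i=1}^{m_n} \big|E_n\cap(B_i\times\cdots\times B_i)\big| \;\le\; c\,\big(\max_i |B_i|\big)^{\alpha-1}\,|V_n|.
\end{equation*}
Now property \ref{redCLTcondb} forces the left-hand side to be $\sim |E_n|$, while \eqref{e:alphasymp} gives $|E_n|\asymp |V_n|^\alpha$; hence $\big(\max_i |B_i|\big)^{\alpha-1}\gtrsim |V_n|^{\alpha-1}$, i.e.\ $\max_i |B_i|\gtrsim |V_n|$. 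On the other hand, a single box $B_{i^*}$ of size $\asymp |V_n|$ satisfies, again by \eqref{e:req1}, $|E_n\cap(B_{i^*}\times\cdots\times B_{i^*})|\le c\,|B_{i^*}|^\alpha \asymp |V_n|^\alpha \asymp |E_n|$, which contradicts property \ref{redCLTcondc} (that quantity should be $o(|E_n|)$). This rules out reducibility. The only subtlety is bookkeeping the constants and confirming the power-mean inequality is used in the right direction for $\alpha>1$; the case $\alpha=1$ is genuinely excluded, consistently with Example~\ref{ex:fractional}-\ref{item:regulargraphs}.

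\emph{Part \ref{item:partB}.} Here the plan is constructive. Fix $d\ge 3$ and $b\in\{2,\dots,d-1\}$, set $\alpha=d/b$. I would take a sequence $\{J^{(0)}_n\}$ of (not necessarily symmetric) subsets of $W_n\times\cdots\times W_n$ ($b$-fold) of combinatorial dimension exactly $d/b$, whose existence for this range of exponents is guaranteed by the random constructions of \cite{BleiPeresScchmerl, BleiSRandom} referenced after Definition~\ref{d:cdm} (and to be recalled in Subsection~\ref{sss:randomconstructions}); the relevant fact is that $d/b\in(1,b]$, so such $b$-dimensional sets of dimension $d/b$ exist. From such a $b$-fold set one builds a $d$-fold set $E_n$ by a \emph{fractional Cartesian product} / symmetrization device (exactly the mechanism alluded to in Example~\ref{e:fcp} and Proposition~\ref{cor:fracprod}): one spreads the $b$ ``coordinate slots'' of $J^{(0)}_n$ over the $d$ slots via a covering of $[d]$ by $b$-subsets, takes the symmetrization over permutations of $[d]$, and deletes diagonal tuples; this produces a symmetric, non-diagonal $E_n$ (Properties \eqref{item:VE1}--\eqref{item:VE3}) whose combinatorial dimension is still $d/b$ because each projection estimate \eqref{e:req1} for the $d$-fold set reduces, through the covering, to the $b$-fold estimate for $J^{(0)}_n$, and the lower bound \eqref{e:req2} survives symmetrization up to a constant. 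By Part \ref{item:partA}, this $\{E_n\}$ is irreducible.

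It then remains to check that the associated homogeneous sums $\{Z_n\}$ with $q_n\equiv 1$ satisfy the CLT \eqref{eq:CLT}. For this I would invoke the universal fourth-moment criterion of Theorem~\ref{t:unidejong}: since the $X_v$ are i.i.d.\ standard Gaussian, it suffices to verify $\mathbb{E}[\widetilde Z_n^4]\to 3$, equivalently that the normalized ``influence'' $\max_{v\in V_n}\mathrm{Inf}_v(Z_n)/\|q_n\|^2\to 0$ and the relevant contraction norms vanish. Because $E_n$ has combinatorial dimension $\alpha=d/b<d$, the degree of any fixed vertex (number of hyperedges through it) is $O(|V_n|^{\alpha-1})=o(|V_n|^\alpha)=o(|E_n|)=o(\|q_n\|^2)$, which kills the individual influences; the genuinely nontrivial point is controlling the fourth cumulant, i.e.\ the sum over ``partially overlapping'' pairs of hyperedges, and showing it is $o(|E_n|^2)$ — this is where the sparsity inherited from the combinatorial-dimension bound \eqref{e:req1} (applied to pairs of coordinate-blocks) must be exploited, and I expect this combinatorial estimate to be the main obstacle of the whole theorem. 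Granting it, Theorem~\ref{t:unidejong} delivers \eqref{eq:CLT}, and combined with irreducibility from Part \ref{item:partA} we conclude that $\{Z_n\}$ verifies an irreducible CLT. \qed
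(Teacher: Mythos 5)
Your Part~\ref{item:partA} argument has a genuine gap at the concluding step. Steps 1--3 are fine and essentially reproduce the paper's estimate $\sum_{s}|B_s|^\alpha\le(\max_s|B_s|)^{\alpha-1}|V_n|$ (inequality \eqref{e:sorey}), and from \ref{redCLTcondb} together with \eqref{e:alphasymp} you correctly conclude that $\max_i|B_i|\gtrsim|V_n|$ (the paper's $|B_1|\ge b\,|V_n|$ with $b=(c\Gamma)^{-1/(\alpha-1)}$). But the final claim --- that a box $B_{i^*}$ of size $\asymp|V_n|$ contradicts \ref{redCLTcondc} because $|E_n\cap(B_{i^*}\times\cdots\times B_{i^*})|\le c\,|B_{i^*}|^\alpha\asymp|E_n|$ --- is not a contradiction at all: \eqref{e:req1} is only an \emph{upper} bound, so ``$\le c\,|E_n|$'' is perfectly compatible with ``$=o(|E_n|)$''. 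Nothing forces the big box to actually contain many points of $E_n$; a priori the mass of $E_n$ could sit in the many small boxes while the few huge boxes are nearly empty. The paper closes exactly this hole with a second round of the same estimate: at most $K_n(b)\le 1/b$ boxes can have size $\ge b|V_n|$; by \ref{redCLTcondc} these boundedly many boxes contribute $o(|E_n|)$ in total, hence by \ref{redCLTcondb} the tail $\sum_{s\ge M(b)}|E_n\cap(B_s\times\cdots\times B_s)|$ (with $M(b)=\lceil b^{-1}\rceil+1$) still carries $\gtrsim|V_n|^\alpha$, and applying \eqref{e:sorey} at $t=M(b)$ forces $|B_{M(b)}|\ge b|V_n|$, i.e.\ strictly more than $1/b$ large boxes --- contradicting $K_n(b)\le 1/b$. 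Without this iteration the proof does not go through.

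Part~\ref{item:partB} of your plan also has two substantive problems. First, your starting object --- a $b$-fold set $\{J^{(0)}_n\}$ of combinatorial dimension $d/b$ --- need not exist: a $b$-fold set has dimension at most $b$, so you need $d/b\le b$, which fails e.g.\ for $d=5$, $b=2$ (where $\alpha=5/2>2$); your claim ``$d/b\in(1,b]$'' is wrong on part of the stated range, so the random constructions of \cite{BleiPeresScchmerl,BleiSRandom} cannot be used this way. The paper instead builds $E_n$ directly as a variant of a fractional Cartesian product: $V_n$ consists of $b$-tuples with distinct entries in $[n]$, and $E_n\subset V_n^d$ is obtained by gluing the $db$ coordinates along a connected partition of the index set into $d$ blocks of size $b$, so that $|V_n|\asymp n^b$ and $|E_n|\asymp n^d$; the upper bound \eqref{e:req1} with exponent $d/b$ is then proved via Finner's discrete Brascamp--Lieb-type inequality \cite{F92}, not by reduction to a lower-order set, and your unproved assertions that the projection bounds and \eqref{e:req2} ``survive'' the spreading/symmetrization would in any case need an argument of this kind. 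Second, for the CLT you explicitly defer the fourth-moment/contraction estimate (``granting it''), which is precisely the nontrivial analytic content of this half of the theorem; the paper obtains it quantitatively from \cite{NPR10b} (the bound \eqref{e:gesine}), whose hypotheses are verified thanks to the fractional-product structure through the counts $|E_n^{\#}|$ and $\max_{\bf i}|E^*_{n,{\bf i}}|$. As written, your Part~\ref{item:partB} neither produces a valid construction for all admissible $(d,b)$ nor completes the verification of the CLT.
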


We will prove Theorem \ref{th:irreducombdim} in Section \ref{sec:sparsity}.
The construction of the irreducible sequences $\{E_n\}$ featured in the second part of the statement is detailed in Example~\ref{e:fcp}: for this, we will take $V=\N^{b}$, $V_n=\{1,\ldots,n\}^b$, and $E_n$ with a structure close to that of a \emph{fractional Cartesian product}, see \cite{blei1979, bleibook, Blei2011Survey, bleijanson, DKP22, NPR10}. {The exact computation of the combinatorial dimension of $\{E_n\}$ exploits a discrete Brascamp-Lieb-type inequality due to Fissler \cite{F92}}, while the asymptotic normality of the  homogeneous sums follows from arguments already rehearsed in \cite{NPR10}.

\subsubsection{An ad-hoc construction in dimension 2}\label{sss:adhoc} It is a well-known fact (discussed e.g. in \cite{BleiSRandom, BleiPeresScchmerl}; see also Subsection~\ref{sss:randomconstructions} below) that there is no canonical notion of fractional Cartesian products in dimension $d=2$. This explains why the second part of Theorem~\ref{th:irreducombdim} does not address the two-dimensional case.

To compensate for this limitation (and to probe the sharpness of our results), we study a class of irreducible sequences of quadratic forms ${Z_n}$ of the type \eqref{eq:bilformsubsec}, constructed to satisfy the following properties: (i) the combinatorial dimension of the corresponding sequence $\{E_n\}$ is not defined (see Remark \ref{r:nice1}); (ii) the spectra of the associated normalized Laplacians appear to be analytically intractable in their most general form.

\medskip 

\begin{figure}[!t]
  \centering
  \begin{subfigure}[b]{0.3\textwidth}
    \includegraphics[width=\textwidth]{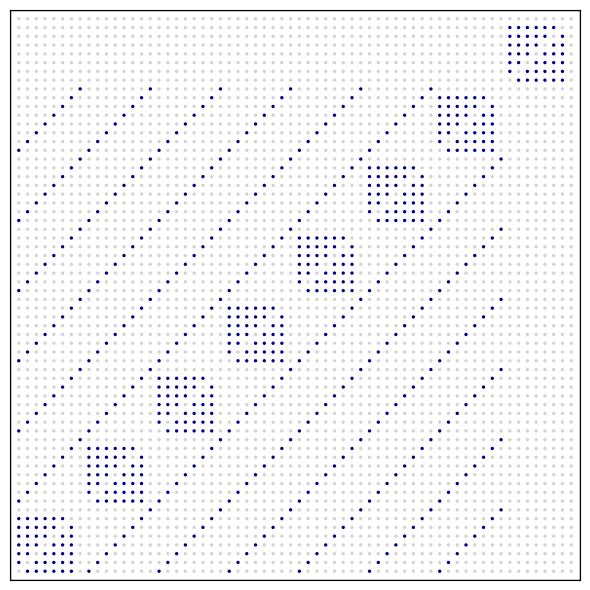}
    \caption{$n=8$} 
  \end{subfigure}
  \hfill
  \begin{subfigure}[b]{0.3\textwidth}
    \includegraphics[width=\textwidth]{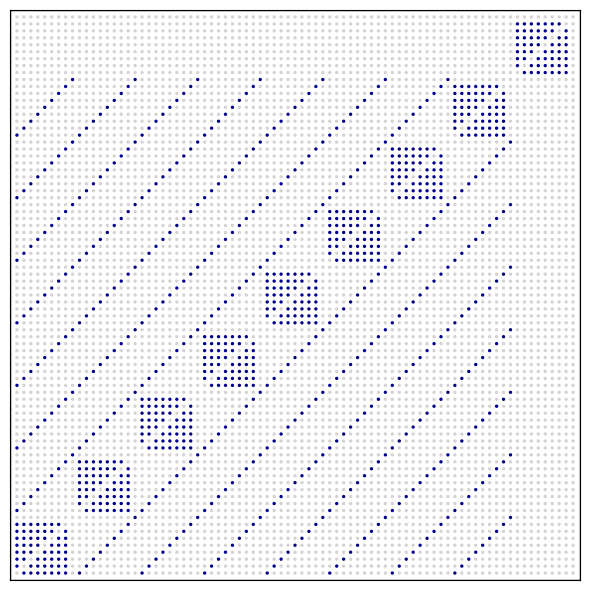}
    \caption{$n=9$} 
  \end{subfigure}
  \hfill
  \begin{subfigure}[b]{0.3\textwidth}
    \includegraphics[width=\textwidth]{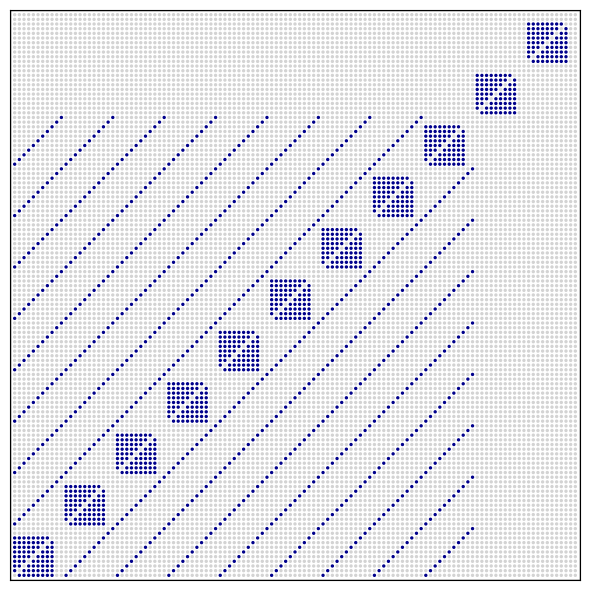}
    \caption{$n=11$} 
  \end{subfigure}
  \caption{A visualization of the set $E_n$ defined in \eqref{eq:E_ngriglia} for $\beta = 0.9$ and different values of $n$, when $\bbS_{\mathsf{v}}(a) = \{a\}\times \{1,..., \lfloor \beta n \rfloor \}$ and $\bbS_{\mathsf{h}}(b) = \{1,..., \lfloor \beta n \rfloor \} \times \{b\}$. Each element of $V_n = \{(i,j) : i,j\in [n]\}$ has been identified with an element of $[n^2]$ by ordering $V_n$ according to the lexicographic order. Pairs $(v_1,v_2)$ not belonging to $E_n$ are represented as grey dots. The fact that $\{E_n\}$ is irreducible implies that one cannot discard the diagonal bands framing the central blue tiles without drastically modifying the asymptotic behavior of $\{Z_n\}$.}
  \label{f:noncoveringblueboxes}
\end{figure}

Our explicit construction is realized as follows. For any $n \in \N$, we set
\begin{equation*}
    V_n = \lbrace 1,\ldots,n \rbrace \times \lbrace 1,\ldots,n \rbrace = [n]^2 \,.
\end{equation*}
Fix $\beta \in (0,1]$ and, for each $a,b \in \{1,\ldots, n\}$, consider subsets $\bbS_{\mathsf{v}}(a)
\subseteq \{a\} \times 
\{1,\ldots, n\}$
and $\bbS_{\mathsf{h}}(b) \subseteq \{1,\ldots, n\} \times \{b\}$
(the labels $\mathsf{v}, \mathsf{h}$ stand for \emph{vertical} and \emph{horizontal})
with
\begin{equation}\label{eq:sizebeta2}
    |\bbS_{\mathsf{v}}(a)| = |\bbS_{\mathsf{h}}(b)| = \lfloor \beta n \rfloor \,.
\end{equation}
\smallskip
We introduce the following equivalence relations on $V_n$: for any $v_1,v_2 \in V_n$,
\begin{align}
&	v_1 \stackrel{\mathsf{h}}{\sim} v_2 \quad  \iff \quad 
 \text{for some $b$ one has } \ v_1,v_2 \in \bbS_{\mathsf{h}}(b) \ \text{ with } \ v_1 \ne v_2 \,, \label{eq:relationH++2}\\
	&v_1 \stackrel{\mathsf{v}}{\sim} v_2  \quad\iff \quad \text{for some $a$ one has} \ v_1,v_2 \in \bbS_{\mathsf{v}}(a) \ \text{ with } \ v_1 \ne v_2 \,.\notag
\end{align}
We define the sequence $\{Z_n\}$ according to \eqref{eq:bilformsubsec}, where $E_n$ has the form
\begin{equation}
    \label{eq:E_ngriglia}
    E_n =\big\{ \, (v_1,v_2) \in V_n \times V_n \, : \text{ either }v_1\stackrel{\mathsf{h}}{\sim}v_2 \text{ or }v_1\stackrel{\mathsf{v}}{\sim}v_2 \, \big\}.
\end{equation}
In words, $v_1\stackrel{\mathsf{h}}{\sim}v_2$ means that $v_1, v_2$ are on the same horizontal line within a set $\mathbb{S}_h(\cdot)$, and similarly for $v_1\stackrel{\mathsf{v}}{\sim}v_2$. We refer to Figure~\ref{f:noncoveringblueboxes} for some graphical representations of~$E_n$.

\begin{remark}[$\{E_n\}$ has no combinatorial dimension]{\rm As discussed in Section \ref{sec:d=2}, one has that $|E_n| =2n \binom{\lfloor \beta n \rfloor}{2} \sim \beta^2 n^3 \asymp |V_n|^{3/2}$ as $n\to \infty$. As anticipated, for every $\beta\in (0,1]$ the sequence $\{E_n\}$ has no definite combinatorial dimension, because for every $n$ and $a \in [n]$
$$
|E_n\cap (\bbS_{\mathsf{v}}(a)\times \bbS_{\mathsf{v}}(a))| = |\bbS_{\mathsf{v}}(a)| \cdot \big(|\bbS_{\mathsf{v}}(a)|-1\big)
\sim |\bbS_{\mathsf{v}}(a)|^2 
$$
hence the two relations \eqref{e:req1} and \eqref{e:req2} cannot be fulfilled for the same~$\alpha$.
}   \label{r:nice1}
\end{remark}

The following result proves that, for $\beta> \frac12$, the sequence $\{Z_n\}$ is irreducibile, in fact it verifies an irreducible CLT.

\begin{theorem}\label{th:irreducibilityk=2}
    Let the above notation and conventions prevail, and fix $$ \frac{1}{2} < \beta \le 1\,.$$ 
    Then, $\{Z_n\}$ verifies an irreducible CLT, in the sense of Definition \ref{def:irreducibility}.
\end{theorem}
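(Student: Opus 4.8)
The plan is to verify the CLT \eqref{e:normadue} first and then establish irreducibility; the CLT part is routine, the irreducibility part is the crux. For the CLT, I would apply the fourth-moment/universality machinery (Theorem~\ref{t:unidejong}): since $\{Z_n\}$ has $0$--$1$ coefficients built from the graph $E_n$, it suffices to bound the ``influences'' and a fourth-moment quantity, or equivalently to show the normalized Laplacian spectral gap is compatible with $\max_j|\gamma_{j,n}|\to 0$; concretely, one checks that $E_n$ contains no heavy vertex (each $X_v$ has influence of order $n/|E_n|=O(1/n^2)\to 0$) and that the count of ``diamonds'' (pairs of edges sharing the structure that contributes to $\mathbb{E}[Z_n^4]$) is of lower order than $|E_n|^2$. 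The relevant overlaps are: two edges sharing one vertex, and two edges sharing two vertices — the latter impossible here by construction since two distinct points lie in at most one common $\mathbb{S}_{\mathsf h}(\cdot)$ or $\mathbb{S}_{\mathsf v}(\cdot)$ line. Counting the former is a direct combinatorial estimate of order $n\cdot(\beta n)^3 = O(n^4) = o(|E_n|^2)=o(n^6)$, which yields the fourth-moment condition and hence \eqref{e:normadue}.

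\textbf{Irreducibility — the main obstacle.} For irreducibility I would argue by contradiction: suppose disjoint boxes $B_1,\dots,B_{m_n}\subset V_n$ satisfy \ref{redCLTcond1}--\ref{redCLTcond3} of Definition~\ref{def:irreducibility}, equivalently properties \ref{redCLTconda}--\ref{redCLTcondc} of Remark~\ref{r:reducible}-\ref{item:point1} since $q_n\equiv 1$. The key structural observation is that the edge set $E_n$ decomposes as $E_n = E_n^{\mathsf h}\sqcup E_n^{\mathsf v}$ (disjoint because of non-diagonality and because horizontal and vertical lines meet in at most one point), and within $E_n^{\mathsf h}$ the vertices of $\mathbb{S}_{\mathsf h}(b)$ form a clique of size $\lfloor\beta n\rfloor$, and similarly for $\mathbb{S}_{\mathsf v}(a)$. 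So $|E_n\cap (B_i\times B_i)| = \sum_b \binom{|B_i\cap \mathbb{S}_{\mathsf h}(b)|}{2} + \sum_a \binom{|B_i\cap \mathbb{S}_{\mathsf v}(a)|}{2}$ (times $2$ for ordered pairs). The heart of the matter is a ``covering cannot be efficient'' lemma: if a single box $B_i$ captures a negligible fraction of $|E_n|$ (condition~\ref{redCLTcondc}), then $B_i$ must have small intersection with \emph{every} line, roughly $|B_i\cap \mathbb{S}_{\mathsf h}(b)| = o(n)$ and likewise for vertical lines, \emph{uniformly}; but then I need to show the boxes collectively cannot recover a $\sim 1$ fraction of the $\binom{\lfloor\beta n\rfloor}{2}$ edges on each line. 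This is where $\beta>\tfrac12$ enters.

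\textbf{Where $\beta>\tfrac12$ is used.} Each point $v=(i,j)\in V_n$ lies on exactly one horizontal line through it (namely the one among the $\mathbb{S}_{\mathsf h}(\cdot)$'s, if any, containing it) and one vertical line, so $v$ belongs to at most two lines. Since the $B_i$ are disjoint, a Cauchy–Schwarz / convexity argument over each fixed line $\ell$ of size $\lfloor\beta n\rfloor$ gives $\sum_i \binom{|B_i\cap\ell|}{2}\le \tfrac12(\max_i|B_i\cap\ell|)\cdot|\ell| = o(n)\cdot\beta n$, so from a single line the boxes only recover an $o(1)$-fraction of that line's edges — \emph{unless} some box has $|B_i\cap\ell|$ comparable to $\beta n$, which \ref{redCLTcondc} forbids. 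The subtlety is that a box big enough along one horizontal line must be small along every vertical line and vice versa, and the condition $\beta>\tfrac12$ guarantees that the ``horizontal'' lines $\mathbb{S}_{\mathsf h}(1),\dots,\mathbb{S}_{\mathsf h}(n)$ and the ``vertical'' lines pairwise overlap (any two horizontal lines of length $>n/2$ inside a row of length $n$... — more precisely, it forces that a box cannot simultaneously be large along a horizontal line and along a vertical line, because their union would exceed the $2\lfloor\beta n\rfloor> n$ available structure, contradicting disjointness/size constraints). Assembling these: summing $|E_n\cap(B_i\times B_i)|$ over $i$ and over all $2n$ lines, the total is $o(|E_n|)$, contradicting~\ref{redCLTcondb}. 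I expect the delicate bookkeeping to be precisely this interplay — quantifying ``large along horizontal $\Rightarrow$ small along vertical'' with the right dependence on $\beta$ — and making the two-sided estimate uniform over the (unbounded, $n$-dependent) number of boxes. The full details are carried out in Section~\ref{sec:d=2}.
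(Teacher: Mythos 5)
Your CLT part is fine: the maximal degree of the graph underlying $E_n$ is at most $2\lfloor\beta n\rfloor=o(|E_n|^{1/2})$, so the fourth--moment condition follows (the paper instead carries out the moment expansion of $\mathbb{E}[\widetilde Z_n^4]$ directly, but this is equivalent in spirit). The irreducibility part, however, contains a genuine gap, and in fact the statement you aim to contradict \ref{redCLTcondb} with is false. You want to show that conditions \ref{redCLTconda} and \ref{redCLTcondc} force $\sum_i |E_n\cap(B_i\times B_i)| = o(|E_n|)$. Take as boxes the $n$ horizontal lines $B_b:=\bbS_{\mathsf h}(b)$ (plus singletons for the leftover vertices): then $\max_b |E_n\cap(B_b\times B_b)| \asymp n^2 = o(n^3)$, so \ref{redCLTcondc} holds, yet the total captured is $\sim 2\beta^2 n^3$, a constant fraction (about one half) of $|E_n|\sim 2\beta^2 n^3\cdot 2$. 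So negligible boxes \emph{can} capture a macroscopic fraction of the variance; what they cannot do is capture a fraction tending to $1$, and that is the only thing one may hope to prove. The same example refutes your intermediate claim that \ref{redCLTcondc} forces $|B_i\cap\ell|=o(n)$ uniformly over lines $\ell$ (a box may contain an entire line, or even $o(n)$ entire lines, and still satisfy \ref{redCLTcondc}), as well as the claim that a box cannot be simultaneously large along a horizontal and a vertical line (a box may contain a full $\bbS_{\mathsf h}(b)$ together with a full $\bbS_{\mathsf v}(a)$; there is no size or disjointness obstruction, since $|V_n|=n^2\gg 2\beta n$). Consequently the role you assign to $\beta>\tfrac12$ is not the correct one.

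The paper's argument runs in the opposite logical direction and with a different use of $\beta>\tfrac12$. Assuming \ref{redCLTcondb}, i.e.\ $\sum_\ell\sigma_n^2(B_\ell)=(4\beta^2+o(1))n^3$, one first shows (Step 1) that for almost every row $b$ the quantity $\Theta_{n,\mathsf h}(b)=2\sum_\ell k_\ell(k_\ell-1)$, with $k_\ell=|B_\ell(\cdot,b)\cap\bbS_{\mathsf h}(b)|$ and $\sum_\ell k_\ell=\lfloor\beta n\rfloor$, must be close to its maximum $2\beta^2n^2$, and by convexity this forces a single ``dominant'' box to occupy almost all of $\bbS_{\mathsf h}(b)$ (and similarly for columns). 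Then (Step 2, where $\beta>\tfrac12$ enters) one looks at a column $a$ that both has a dominant box of size $\ge(\beta-2\sqrt\eta)n$ and meets the union $D$ of the row-dominant pieces in $\ge(\beta-5\sqrt\eta)n$ points; since $2\beta>1$ these two sets of size $\approx\beta n$ inside a column of length $n$ must overlap in $\approx(2\beta-1)n$ points, which transfers row-dominance to a single color on $\gtrsim(2\beta-1)n$ rows. That single box then has $\sigma_n^2\gtrsim 2\beta^2(2\beta-1)n^3$, contradicting \ref{redCLTcondc} and even yielding the quantitative lower bound of Theorem~\ref{th:mainth2d}. Your proposal is missing precisely this ``winner takes nearly all of each line, and one winner must dominate a positive fraction of the rows'' mechanism, and its concluding contradiction (``total is $o(|E_n|)$'') cannot be repaired as stated.
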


We prove this result in Section \ref{sec:d=2}, where we actually obtain a strengthened version of Theorem~\ref{th:irreducibilityk=2} (see, in particular, Theorem \ref{th:mainth2d}). 
We refer to Subsection~\ref{subsec:examples} --- see in particular Example~\ref{item:cartprod(e)} --- for a discussion of the irreducibility of $\{Z_n\}$ in some special cases.

 }

\subsubsection{Random constructions}
\label{sss:randomconstructions}

{
We now present a random construction from \cite{BleiSRandom, BleiPeresScchmerl}, yielding for every $d\geq 2$ the existence of a sequence $\{E_n\}$ with arbitrary combinatorial dimension $\alpha\in (1,d)$. To state this result, we fix $d$ and $\alpha$ as above and, for every $n\geq 1$, we consider an array 
\begin{equation}\label{e:array}
H^{(n)}  = \left\{    H^{(n)}_{j_1,...,j_d} : (j_1,...,j_d)\in [n]^d\right\}
\end{equation}
of i.i.d. Bernoulli random variables with parameter $n^{\alpha-d}$. We define the random sets
\begin{equation}\label{e:Hset}
{\bf E}_n(\alpha; d) := \left\{ (j_1,...,j_d)\in [n]^d : H^{(n)}_{j_1,...,j_d}=1\right\}, \quad n\geq 1,
\end{equation}
and, for every pair of constants $0<\epsilon <\eta$, we define the event $C_n(\epsilon,\eta; \alpha, d)$ to be the set of all $\omega\in \Omega$ such that 
$|{\bf E}_n(\alpha; d)(\omega)|\geq \epsilon\, n^{ \alpha}$ and $|{\bf E}_n(\alpha; d)(\omega)\cap(A_1\times \cdots \times A_d)|\leq \eta\, \max_i |A_i|^\alpha$, for all $A_1,...,A_d\subseteq [n]$.

\begin{figure}
  \centering
  \begin{subfigure}[b]{0.3\textwidth}
    \includegraphics[width=\textwidth]{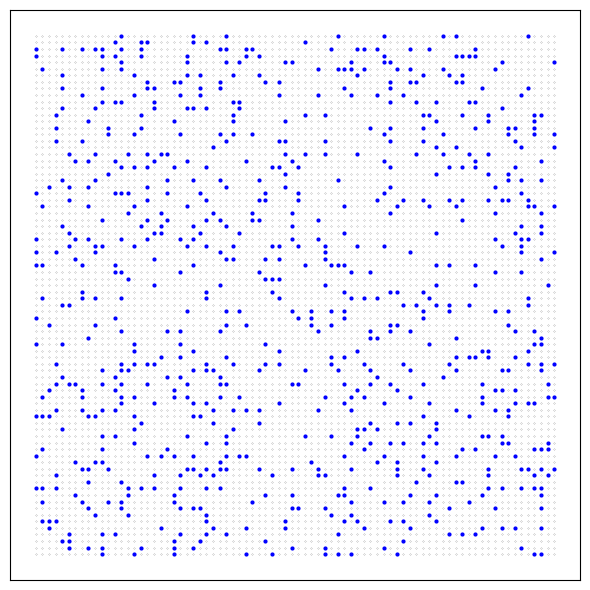}
    \caption{$\alpha=1.1$} 
  \end{subfigure}
  \hfill
  \begin{subfigure}[b]{0.3\textwidth}
    \includegraphics[width=\textwidth]{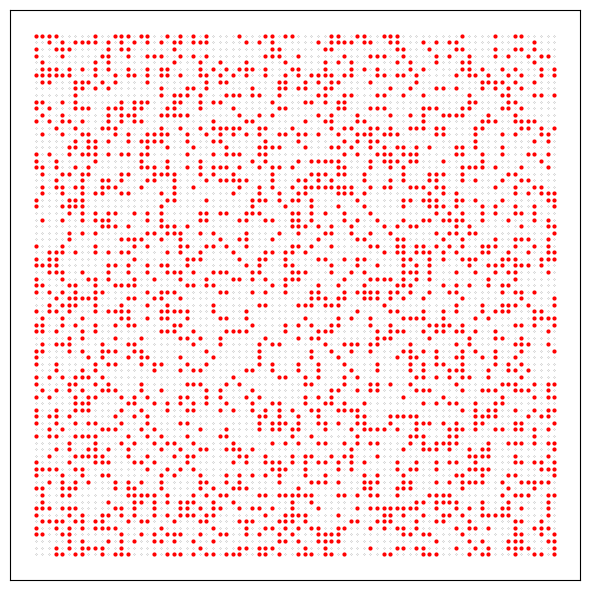}
    \caption{$\alpha=1.7$} 
  \end{subfigure}
  \hfill
  \begin{subfigure}[b]{0.3\textwidth}
    \includegraphics[width=\textwidth]{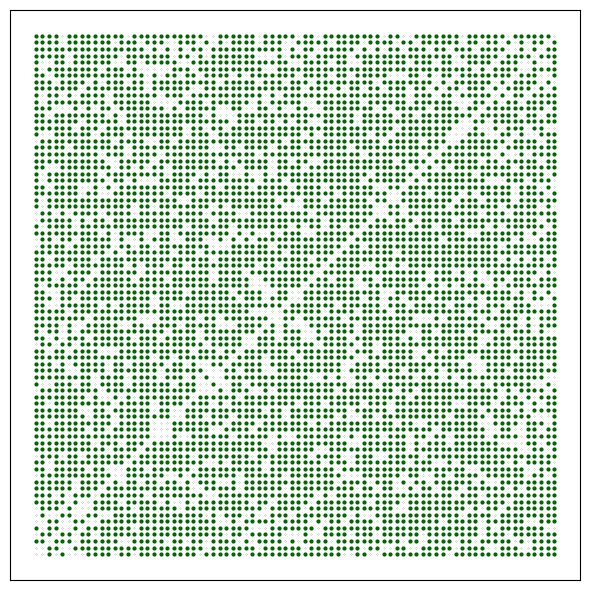}
    \caption{$\alpha=1.9$} 
  \end{subfigure}
  \caption{\footnotesize Three realisations of the (symmetric and non-diagonal) random point configuration $\widetilde{\bf E}_n(\alpha;2)$, as defined in Section \ref{sss:randomconstructions}, for $n=80$ and different values of $\alpha\in (1,2)$.}
  \label{f:randomplots}
\end{figure}

\begin{theorem}[Lemma 2 in  \cite{BleiPeresScchmerl}]\label{t:peres} Let the above notation and assumptions prevail. Then, for every $d\geq 2$ and every $\alpha\in (1,d)$ there exist constants $0<\epsilon_0 <\eta_0$ (depending on $d, \alpha$) such that
$$
\lim_{n\to\infty}\mathbb{P}\left\{ C_n(\epsilon_0,\eta_0; \alpha, d)\right\} = 1.
$$
    
\end{theorem}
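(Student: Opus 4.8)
The plan is a first-moment bound on the size of ${\bf E}_n(\alpha;d)$ together with a union bound, over all ``rectangles'' $A_1\times\cdots\times A_d$, of a Chernoff tail estimate for a binomial count. Write $p:=n^{\alpha-d}$, so the entries of $H^{(n)}$ are i.i.d.\ $\mathrm{Bernoulli}(p)$ and $\mathbb{E}\,|{\bf E}_n(\alpha;d)|=n^dp=n^\alpha$; abbreviate ${\bf E}_n:={\bf E}_n(\alpha;d)$. I will show that $\epsilon_0:=\tfrac12$ works, together with any constant $\eta_0=\eta_0(d,\alpha)$ large enough to satisfy conditions (i)--(iii) below. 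The lower bound is immediate: $|{\bf E}_n|\sim\mathrm{Bin}(n^d,p)$ has mean $n^\alpha$, so $\mathbb{P}(|{\bf E}_n|<\tfrac12 n^\alpha)\le e^{-n^\alpha/8}\to 0$.

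For the combinatorial-dimension bound, fix $(A_1,\dots,A_d)$ with $A_i\subseteq[n]$; set $b_i:=|A_i|$, $a:=\max_i b_i$, $M:=\prod_i b_i\le a^d$, and note $X:=|{\bf E}_n\cap(A_1\times\cdots\times A_d)|\sim\mathrm{Bin}(M,p)$ with mean $\mu=Mp\le a^dn^{\alpha-d}=a^\alpha(a/n)^{d-\alpha}\le a^\alpha$. If $M\le\eta_0 a^\alpha$ then $X\le M\le\eta_0 a^\alpha$ deterministically; since $M\le a^d$, this already disposes of every tuple with $a\le a_0:=\eta_0^{1/(d-\alpha)}$. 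For the remaining tuples (those with $M>\eta_0 a^\alpha$, which forces $a>a_0$) the exponential Markov inequality optimized over its parameter gives $\mathbb{P}(X>\eta_0 a^\alpha)\le(e\mu/(\eta_0 a^\alpha))^{\eta_0 a^\alpha}\le\big((e/\eta_0)(a/n)^{d-\alpha}\big)^{\eta_0 a^\alpha}=:P_a$, i.e.\ $\log P_a=-\eta_0 a^\alpha\log(\eta_0/e)-\eta_0(d-\alpha)a^\alpha\log(n/a)$. The number $N_a$ of tuples with $\max_i b_i=a$ is at most $\big(\sum_{k=0}^a\binom{n}{k}\big)^d$, hence $\le\big((a+1)(en/a)^a\big)^d$ for $a\le n/2$ (so $\log N_a\le d\log(a+1)+ad+ad\log(n/a)$), and $\le 2^{nd}$ in general.

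It then remains to check $\sum_{a>a_0}N_aP_a\to 0$. For $a_0<a\le n/2$, collecting the $\log(n/a)$-terms gives $\log(N_aP_a)\le[d\log(a+1)+ad-\eta_0 a^\alpha\log(\eta_0/e)]+[ad-\eta_0(d-\alpha)a^\alpha]\log(n/a)$. Now fix $\eta_0$ large enough that: (i) $\log(\eta_0/e)\ge 1$; (ii) $\eta_0\ge 2d$, which (using $\log(a+1)\le a$ and $a^{\alpha-1}\ge 1$) forces the first bracket to be $\le 0$ for every $a\ge 1$; and (iii) $\eta_0^{(d-1)/(d-\alpha)}\ge 2d/(d-\alpha)$, which forces $\eta_0(d-\alpha)a^{\alpha-1}\ge 2d$, hence the second bracket $\le -ad$, for every $a>a_0$ --- here $\alpha>1$ enters, so that $a\mapsto a^{\alpha-1}$ is increasing. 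Since $\log(n/a)\ge 0$ on $a\le n$, this yields $N_aP_a\le(a/n)^{ad}$ on $a_0<a\le n/2$; splitting at $\sqrt n$, for $a\le\sqrt n$ one has $(a/n)^{ad}\le n^{-ad/2}\le n^{-d/2}$, so the partial sum is $\le\sqrt n\cdot n^{-d/2}=n^{(1-d)/2}\to 0$ (using $d\ge 2$), while for $\sqrt n<a\le n/2$ one has $(a/n)^{ad}\le 2^{-ad}\le 2^{-d\sqrt n}$, so the partial sum is $\le\tfrac n2\,2^{-d\sqrt n}\to 0$. Finally, on $n/2<a\le n$ we drop the nonnegative $\log(n/a)$-term in $\log P_a$ and use $a^\alpha\ge 2^{-\alpha}n^\alpha$ to get $N_aP_a\le\exp\big(nd\log 2-\eta_0 2^{-\alpha}\log(\eta_0/e)\,n^\alpha\big)$, whose sum over these $\le n/2$ values is $\le\tfrac n2\exp(nd\log 2-c\,n^\alpha)\to 0$ because $\alpha>1$. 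Combined with the lower bound this gives $\mathbb{P}(C_n(\epsilon_0,\eta_0;\alpha,d))\to 1$.

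The only delicate point is the union bound, which ranges over $2^{\Theta(nd)}$ tuples --- far too many for a naive per-tuple tail estimate. It survives because of three structural features: tuples with $\max_i|A_i|$ bounded are too small to ever be bad, which removes exactly the ``heavy'' small-$a$ contributions whose tail does not decay; for $a$ up to roughly $n/2$ the Chernoff bound decays like $(a/n)^{ad}$, whose summability over $a$ relies on $d\ge 2$; and for $a$ of order $n$, where the $(a/n)$-gain disappears, the $e^{-\Theta(n^\alpha)}$ concentration defeats the $e^{\Theta(n)}$ entropy precisely because $\alpha>1$. The sole quantitative step is to choose $\eta_0=\eta_0(d,\alpha)$ making (i)--(iii) hold simultaneously (e.g.\ $\eta_0=\max\{e^2,2d,(2d/(d-\alpha))^{(d-\alpha)/(d-1)}\}$); the rest is bookkeeping.
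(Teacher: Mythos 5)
Your proposal is correct, but note that the paper itself contains no proof of this statement: Theorem~\ref{t:peres} is imported verbatim as Lemma~2 of the cited reference \cite{BleiPeresScchmerl}, so there is no in-paper argument to compare against. What you have written is a self-contained proof in the spirit of the probabilistic method used there, and the steps check out. The lower bound is a standard Chernoff lower tail for $\mathrm{Bin}(n^d,n^{\alpha-d})$ with mean $n^\alpha$. For the rectangle bound, stratifying the union bound by $a=\max_i|A_i|$ is the right move, and the three structural points you isolate are exactly what make the $2^{\Theta(nd)}$-fold union bound survive: (a) rectangles with $a\le\eta_0^{1/(d-\alpha)}$ satisfy $\prod_i|A_i|\le a^d\le\eta_0 a^\alpha$ and are dismissed deterministically, which is essential because the per-rectangle tail does not beat the entropy at bounded $a$ without this; (b) your conditions (i)--(iii) on $\eta_0$ do force the first bracket $\le 0$ (using $\log(a+1)\le a$ and $a^\alpha\ge a$) and the second bracket $\le -ad$ (using monotonicity of $a\mapsto a^{\alpha-1}$, i.e.\ $\alpha>1$, for $a>a_0$), giving $N_aP_a\le(a/n)^{ad}$ up to $a\le n/2$, whose sum is controlled by the split at $\sqrt n$ and needs only $d\ge 2$; (c) at scale $a\asymp n$ the $e^{-\Theta(n^\alpha)}$ tail defeats the $e^{\Theta(n)}$ entropy precisely because $\alpha>1$. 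The Chernoff bound $\mathbb{P}(X\ge t)\le(e\mu/t)^t$ is legitimately applied since $t=\eta_0 a^\alpha\ge\mu$ for $\eta_0>1$, and your explicit choice $\epsilon_0=\tfrac12<\eta_0$ meets the requirement $0<\epsilon_0<\eta_0$ with constants depending only on $d,\alpha$. So the proposal supplies a complete proof of a result the paper only cites; it could stand as an appendix-level argument if one wanted the paper to be self-contained on this point.
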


It is easily seen that, for $d=2$, the conclusion of Theorem \ref{t:peres} continues to hold if one replaces the set ${\bf E}_n(\alpha;2)$ with the symmetrized and non-diagonal set $\widetilde{\bf E}_n(\alpha;2)$, obtained by modifying \eqref{e:Hset} as follows: (a) define $H^{(n)}_{j_1,j_1} = 0$ for all $j_1\in [n]$, and (b) force symmetry by setting $H^{(n)}_{j_1,j_2} = H^{(n)}_{j_2,j_1} $, for all $1\leq j_1<j_2\leq n$. In this way, the array $$\left\{{\ind}_{\widetilde{\bf E}_n(\alpha;2)}(i,j)\right\} = \left\{H^{(n)}_{i,j}\right\} $$ coincides with the (random) adjacency matrix of the Erd\"os-Renyi random graph $G(n,p_n)$, with $p_n = n^{\alpha-2}$; see e.g. \cite{bollobas}. Figure \ref{f:randomplots} displays several realizations of the random set $\widetilde{\bf E}_n(\alpha;2)$, for $n = 80$ and different values of $\alpha$. 

\smallskip 

The next statement complements Theorem \ref{th:irreducombdim}, by showing the existence of irreducible CLTs for homogeneous sums of order $2$, that are associated with sequences $\{E_n\}$ of arbitrary combinatorial dimension $\alpha\in (1,2)$. The (simple) proof is presented in Section \ref{ss:proofproba}, and uses the probabilistic method by exploiting the content of Theorem \ref{t:peres} in the case $d=2$. More refined results (immaterial for our statement) can be obtained by following the strategy outlined in \cite[Example 2.3]{BDMM22}, exploiting the classical estimates from \cite{largesudakov}.

\begin{theorem}\label{t:probamethod} Fix $d=2$ and $\alpha\in (1,2)$. Then, there exists a sequence $\{E_n\}$ such that: {\rm (i)} $\{E_n\}$ satisfies Properties \eqref{item:VE1}---\eqref{item:VE3}, as listed at the beginning of Section \ref{ss:prelim}, {\rm (ii)} $\{E_n\}$ has combinatorial dimension $\alpha$; {\rm (iii)} the associated sequence of homogeneous sums $\{Z_n\}$, defined as in \eqref{eq:bilformsubsec}, satisfies the irreducible CLT \eqref{e:normadue}.
    
\end{theorem}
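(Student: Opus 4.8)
The plan is to produce $\{E_n\}$ as a suitable deterministic realization of the symmetrized, non-diagonal random set $\widetilde{\bf E}_n(\alpha;2)$ from Subsection~\ref{sss:randomconstructions} (whose indicator is the adjacency matrix of the Erd\H{o}s--R\'enyi graph $G(n,p_n)$ with $p_n=n^{\alpha-2}$), and then to deduce the CLT \eqref{e:normadue} from the fourth moment theorem. First I would invoke Theorem~\ref{t:peres} in the form valid for $\widetilde{\bf E}_n(\alpha;2)$ (as recorded just after its statement): for the given $\alpha\in(1,2)$ there are constants $0<\epsilon_0<\eta_0$ such that the event $C_n$ on which both $|\widetilde{\bf E}_n(\alpha;2)|\ge\epsilon_0\,n^\alpha$ and $|\widetilde{\bf E}_n(\alpha;2)\cap(A_1\times A_2)|\le\eta_0\max\{|A_1|,|A_2|\}^\alpha$ for all $A_1,A_2\subseteq[n]$ has $\mathbb{P}(C_n)\to1$. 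Taking $A_1=A_2=[n]$ shows that on $C_n$ one also has $|\widetilde{\bf E}_n(\alpha;2)|\le\eta_0\,n^\alpha$, so the two requirements \eqref{e:req1}--\eqref{e:req2} defining combinatorial dimension $\alpha$ are met, with the uniform constants $c=\eta_0$ and $c'=\epsilon_0$.

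Next I would impose a high-probability control on the maximum degree $\Delta_n$ of $G(n,p_n)$. Since $np_n=n^{\alpha-1}\to\infty$ and $n^{\alpha-1}/\log n\to\infty$ (this is where $\alpha>1$ enters), a Chernoff estimate together with a union bound over the $n$ vertices gives $\mathbb{P}\{\Delta_n\le 2n^{\alpha-1}\}\to1$. Put $D_n:=C_n\cap\{\Delta_n\le 2n^{\alpha-1}\}$, so that $\mathbb{P}(D_n)\to1$; then for all large $n$ I fix $\omega_n\in D_n$ (and $\omega_n$ arbitrary for the finitely many remaining $n$) and set $E_n:=\widetilde{\bf E}_n(\alpha;2)(\omega_n)$. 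By construction $\{E_n\}$ is symmetric, non-diagonal, and $|E_n|\ge\epsilon_0\,n^\alpha\to\infty$, so \eqref{item:VE1}--\eqref{item:VE3} hold; it has combinatorial dimension $\alpha\in(1,2)$; and therefore, by part~(a) of Theorem~\ref{th:irreducombdim} together with Remark~\ref{r:reducible}-{\rm \ref{item:point1}} (recall $q_n\equiv1$), the associated $\{Z_n\}$ is irreducible. This establishes (i), (ii), and the irreducibility half of (iii).

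It remains to check the CLT. Write $A_n=\ind_{E_n}$ for the symmetric $0$--$1$ adjacency matrix; it has zero diagonal and $\mathrm{tr}(A_n^2)=|E_n|$. Diagonalizing $A_n$ yields the spectral representation $Z_n=\sum_k\lambda_k(A_n)\,(Y_k^2-1)$ with $\{Y_k\}$ i.i.d.\ standard Gaussian, whence $\mathbb{E}[Z_n^2]=2|E_n|$ and a standard computation gives $\mathbb{E}[\widetilde{Z}_n^4]-3 = 12\,\mathrm{tr}(A_n^4)/|E_n|^2$. Since the spectral radius of a graph adjacency matrix never exceeds its maximum degree, $\mathrm{tr}(A_n^4)=\sum_k\lambda_k^4\le\Delta_n^2\sum_k\lambda_k^2=\Delta_n^2\,|E_n|$, so on $D_n$
\begin{equation*}
\mathbb{E}[\widetilde{Z}_n^4]-3\ \le\ \frac{12\,\Delta_n^2}{|E_n|}\ \le\ \frac{12\,(2n^{\alpha-1})^2}{\epsilon_0\,n^\alpha}\ =\ \frac{48}{\epsilon_0}\,n^{\alpha-2}\ \asto{n}\ 0 ,
\end{equation*}
because $\alpha<2$. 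Hence $\mathbb{E}[\widetilde{Z}_n^4]\to3$, and the fourth moment theorem for the second Wiener chaos (see \eqref{e:4thmom}, or Theorem~\ref{t:unidejong}) gives $\widetilde{Z}_n\xlongrightarrow{d}N\sim\mathcal{N}(0,1)$, i.e.\ \eqref{e:normadue}; together with irreducibility this is exactly the assertion.

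I expect the only slightly delicate point to be arranging that the combinatorial-dimension estimate and the degree bound hold on a common realization of the array $\{H^{(n)}_{i,j}\}$; this is painless here because both hold with probability tending to $1$, and so does their intersection. (As the text remarks, sharper statements follow the route of \cite[Example~2.3]{BDMM22}, using the classical estimates of \cite{largesudakov}.)
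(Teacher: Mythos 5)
Your proposal is correct and follows essentially the same route as the paper's proof in Appendix~\ref{ss:proofproba}: the probabilistic method via Theorem~\ref{t:peres} (giving combinatorial dimension $\alpha$, hence irreducibility by Theorem~\ref{th:irreducombdim}-\ref{item:partA}), combined with a Chernoff-plus-union-bound control of the maximal degree of $G(n,p_n)$ to deduce the CLT from the fourth-moment criterion. The only differences are cosmetic: you use the threshold $2n^{\alpha-1}$ instead of $n^{\alpha/2-\epsilon}$ and verify $\mathbb{E}[\widetilde{Z}_n^4]\to 3$ by the explicit trace bound $\mathrm{tr}(A_n^4)\le \Delta_n^2\,|E_n|$, whereas the paper simply invokes the maximal-degree condition \eqref{e:maxdd} of Remark~\ref{r:afterdejong}-\ref{item:rempoint2}.
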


\smallskip
 
The next question is left open for further research, and could be in principle attacked by refining the study of the sets ${\bf E}(\alpha;d)$ introduced in the present section.

\begin{enumerate}[label=\textbf{(Q2)}]
\item \label{q2}
\begin{center}
\emph{Fix \( d \geq 3 \) and \( \alpha \in (1,d) \). Is there a sequence \( \{Z_n\} \) as in \eqref{eq:defpolchaos} such that:}
\end{center}
\begin{enumerate}[label=(\alph*)]
  \item \( q_n \equiv 1 \),
  \item \( \{E_n\} \) has combinatorial dimension \( \alpha \),
  \item \( \{\tilde{Z}_n\} \) verifies the (irreducible) CLT \eqref{eq:CLT}?
\end{enumerate}
\end{enumerate}

\medskip 

Note that Theorem \ref{th:irreducombdim}-\ref{item:partB} provides a positive answer to Question \ref{q2} for all $d\geq 3 $ and $\alpha = d/b$, $b=2,...,d-1$.

\medskip

}

\subsection{On necessary conditions for irreducibility}\label{ss:further}

The main contribution of this paper is the identification of several sufficient conditions for irreducibility, stemming from both spectral (Theorem~\ref{th:mainSPECTRUMgraph} and its extensions to hypergraphs) and combinatorial (Theorems~\ref{th:irreducombdim} and \ref{th:irreducibilityk=2}) criteria. By contrast, we have not been able to establish corresponding necessary conditions. This limitation is briefly discussed in the following list.

\begin{enumerate}[label=(\roman*)]
\item The spectral condition~\eqref{e:mucca} is \emph{not} necessary for irreducibility,
as shown, for instance, in Example~\ref{item:cartprod(e)} of Section~\ref{subsec:examples}: one can construct sequences of irreducible homogeneous sums whose Laplace eigenvalues vanish at every order. We conjecture that necessary spectral conditions for irreducibility might be obtained by extending ``hard'' Cheeger-type inequalities (such as those presented in the forthcoming Theorem~\ref{t:hardcheeger}) to incorporate constraints on the relative sizes of the blocks $S_1,\ldots,S_k$. Some progress in this direction can be found, for instance, in~\cite{AnjosNeto}.

\item Similarly, Example~\ref{item:cartprod(e)} in Section \ref{subsec:examples} and Example~\ref{ex:fractional} will show, respectively, that (a) irreducible CLTs may also arise in cases where the combinatorial dimension is not well-defined, and (b) having a combinatorial dimension strictly greater than $1$ is not necessary for irreducibility, since there exist sequences of regular graphs with fixed degree that generate both irreducible and reducible CLTs (note that sequences of regular graphs with a fixed degree are always associated with summation domains having a combinatorial dimension equal to one; see Example~\ref{ex:fractional}-\ref{item:regulargraphs}). At the time of writing, we are not aware of any additional combinatorial characterizations of discrete symmetric sets that could help close this gap.

\end{enumerate}

\medskip

\begin{remark}{\rm An alternative approach to studying the CLTs considered in the present paper is based on the use of {\em dependency graphs} and Stein's method; see, for instance, the classical reference~\cite{baldirinott}. In the context of $0$-$1$ quadratic forms as in~\eqref{eq:bilformsubsec}, this would entail addressing questions of reducibility and irreducibility by means of the so-called {\em line graph} associated with $G_n$, that is, the graph whose vertices correspond to the edges of $G_n$ and where two edges are connected if and only if they are adjacent (see e.g.~\cite[Section~1.4.5]{BrHaBook}). While some preliminary computations have shown that this approach may lead to suboptimal results, we leave this direction open for future investigation.

}
\end{remark}

\section{Preliminaries, examples and applications}
\label{sec:examples}

In this section, we first recall some (by now) classical conditions which ensure the validity of the Central Limit Theorem for homogeneous sums. We then provide examples and applications which illustrate our setting and our main results. 

\subsection{Universal fourth moment theorems}
\label{sec:4th}

The following result, based on the material discussed in \cite[Chapter 5 and Chapter 11]{NP12}, corresponds to the {\em universal de Jong theorem} evoked in the Introduction.

\begin{theorem}[Theorem 1.10 in \cite{NPR10}]\label{t:unidejong} Let the assumptions and notation of Section \ref{ss:prelim} prevail. Then, the following three properties are equivalent, as $n\rightarrow \infty$:
\begin{enumerate}[label={\rm (\roman*)}]
\item The CLT \eqref{eq:CLT} holds;
\item \label{item:cond2thm3.1} $\mathbb{E}[\tilde{Z}_n({\bf X})^4]\to 3$;
\item \label{item:point3} For {\em any} collection ${\bf Y} = \{Y_v : v\in V\}$ of i.i.d. random variables with zero mean and unit variance,
$$ \widetilde{Z}_n({\bf Y})  \coloneq \frac{Z_n({\bf Y})}{\sqrt{d!\|q_n\|^2}} \, \xlongrightarrow{d} \, N\sim  \cN \big(0,1), $$
where $Z_n({\bf Y})$ is defined as in \eqref{eq:defpolchaos}.
\item For {\em any} collection ${\bf U} = \{U_v : v\in V\}$ of independent random variables with zero mean, unit variance and such that $\sup_{v\in V} \mathbb{E}[|U_v|^p]<\infty$ for some $p>2$,
$$ \widetilde{Z}_n({\bf U})   \, \xlongrightarrow{d} \, N\sim  \cN \big(0,1), $$
where the notation is analogous to that introduced at Point \ref{item:point3}. 
\end{enumerate}
    
\end{theorem}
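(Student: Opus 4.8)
\textbf{Proof plan for Theorem~\ref{t:unidejong}.} The equivalences (i)$\Leftrightarrow$(ii)$\Leftrightarrow$(iii) are exactly the statement of \cite[Theorem~1.10]{NPR10}, so the only point requiring argument is that (iii) (or equivalently any of the first three conditions) implies (iv), i.e.\ that universality extends from i.i.d.\ sequences with two finite moments to \emph{independent but not identically distributed} arrays under the uniform moment bound $\sup_{v\in V}\mathbb{E}[|U_v|^p]<\infty$ for some $p>2$. The plan is to re-run the invariance-principle machinery of \cite{MOO10, NPR10} in this slightly more general setting, checking that the proof there never really uses that the $U_v$ are identically distributed.

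First I would recall the structure of the universality proof: one compares $Z_n(\mathbf{X})$ (Gaussian inputs) with $Z_n(\mathbf{U})$ by a Lindeberg-type swapping argument, replacing the variables one at a time and controlling each replacement error by a third-order Taylor expansion of a smooth test function $\varphi$ composed with the multilinear polynomial $Z_n$. The first and second order terms cancel because $X_v$ and $U_v$ have matching first two moments, and the third-order remainder is bounded, for the $v$-th swap, by $C\,\|\varphi'''\|_\infty\,\bigl(\mathbb{E}[|X_v|^3]+\mathbb{E}[|U_v|^3]\bigr)$ times the ``influence'' $\mathrm{Inf}_v(q_n) := \sum_{(v_2,\ldots,v_d)} q_n(v,v_2,\ldots,v_d)^2$ raised to the appropriate power. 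Here the only distributional input needed is a uniform bound on third (or $(2+\delta)$-th) absolute moments, which is guaranteed by $\sup_v \mathbb{E}[|U_v|^p]<\infty$, $p>2$, via Lyapunov/Jensen; identical distribution is never invoked. Summing over $v\in V_n$, the total error is $O\!\bigl(\max_v \mathrm{Inf}_v(q_n)^{1/2}\bigr)$ up to constants, which tends to $0$ precisely because condition (ii) (the fourth-moment condition, equivalently de~Jong's condition) forces the maximal influence to vanish --- this is the content of de~Jong's criterion as used in \cite{NPR10} and \cite[Chapter~11]{NP12}. Hence $\mathbb{E}[\varphi(\widetilde Z_n(\mathbf{U}))]-\mathbb{E}[\varphi(\widetilde Z_n(\mathbf{X}))]\to 0$ for all $\varphi\in C^3_b$, and since $\widetilde Z_n(\mathbf{X})\xrightarrow{d} N$ by hypothesis, we get $\widetilde Z_n(\mathbf{U})\xrightarrow{d} N$.

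Two technical points deserve care. One is the reduction from general bounded continuous test functions to smooth compactly-supported ones with controlled third derivative: this is standard (mollification plus a uniform bound on $\sup_n \mathbb{E}[\widetilde Z_n(\mathbf{U})^2]=1$ to control tails via Markov), but must be stated. The other, which I expect to be the main obstacle, is that the clean ``influence'' bound above is really tailored to the hypercontractive/Gaussian side; on the $\mathbf{U}$ side one only has $p>2$ moments, not all moments, so one cannot directly invoke hypercontractivity for $Z_n(\mathbf{U})$. The resolution --- again following \cite{MOO10} --- is that hypercontractivity is only needed for the \emph{Gaussian} comparison variable (to pass between the fourth-moment condition and smallness of influences), while the swapping estimate itself needs only $2+\delta$ moments of the $U_v$; so the argument goes through provided one is careful to perform the hypercontractive step entirely on the $\mathbf{X}$ side. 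Alternatively, and perhaps more cleanly, one observes that (ii)$\Rightarrow$(iii) already gives the i.i.d.\ universality, and then notes that the non-identically-distributed case follows by the very same Lindeberg swap between $\mathbf{U}$ and an i.i.d.\ Gaussian family, so that no new hypercontractivity is needed beyond what \cite{NPR10} already established. I would present the argument in this second form, citing \cite{MOO10, NPR10, NP12} for the swapping lemma and de~Jong's criterion, and spelling out only the uniform-moment verification, which is the one genuinely new line.
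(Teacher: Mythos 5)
The paper gives no proof of this statement: it is quoted directly as Theorem~1.10 of \cite{NPR10} (supplemented by the discussion in \cite[Chapters 5 and 11]{NP12}), so your reliance on that reference for the equivalences is exactly the paper's approach. The only step you treat as new --- deducing (iv) via a Lindeberg swap under the uniform $p>2$ moment bound --- is in fact already part of the cited theorem, and your sketch of it (fourth-moment condition $\Rightarrow$ vanishing contraction norms $\Rightarrow$ vanishing maximal influence, since the diagonal of the $(d-1)$-contraction is the influence vector, followed by MOO-type swapping that needs only $2+\delta$ moments and no identical distribution of the $U_v$) faithfully reproduces the argument of \cite{MOO10, NPR10}, so there is nothing to object to beyond the redundancy.
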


\begin{remark}\label{r:afterdejong}{\rm
\begin{enumerate}[label=(\arabic*)]
\item Under the above notation and assumptions, define $\tilde{q}_n$ to be the mapping on $V_n\times \dots \times V_n$ such that $\tilde{q}_n(v_1,...,v_d) = q_n(v_1,...,v_d)/\sqrt{d!\|q_n\|^2}$ if $(v_1,...,v_d)\in E_n$, and $\tilde{q}_n(v_1,...,v_d) = 0$ otherwise. For every $r=1,...,d$, define the {\em contraction}
\begin{eqnarray*}
\tilde{q}_n\star_r \tilde{q}_n &:& \underbrace{ \, V_n\times \cdots \times V_n \, }_\text{$2d-2r$ times}\to \mathbb{R}\\  &: &(v_1,...,v_{2d-2r})\mapsto\!\!\! \sum_{a_1,...,a_r\in V_n} \tilde{q}_n(a_1,...,a_r, v_1,...,v_r)\tilde{q}_n(a_1,...,a_r, v_{r+1},...,v_{2d-2r})\\
&:=& \tilde{q}_n\star_r \tilde{q}_n(v_1,...,v_{2d-2r}).
\end{eqnarray*}
Then, \cite[Proposition 1.6]{NPR10} implies that Condition \ref{item:cond2thm3.1} in Theorem \ref{t:unidejong} is verified if and only if, for all $r=1,...,d-1$,
\begin{equation}\label{e:qn}
\sum_{v_1,...,v_{2d-2r}\in V_n} \tilde{q}_n\star_r \tilde{q}_n(v_1,...,v_{2d-2r})^2\longrightarrow 0.
\end{equation}
\item \label{item:rempoint2} For $d=2$, consider the case of constant coefficients $q_n\equiv 1$ and, for $n\geq 1$, denote by $\lambda_{n,1},...,\lambda_{n, |V_n|}$ the eigenvalues of the matrix $\{A_n(v,u) : u,v\in V_n\}$ such that $A_n(u,v)$ equals one or zero according as $(u,v)\in E_n$ or not (that is, $A_n$ is the {\it adjacency matrix} of the undirected graph with vertices $V_n$ induced by the symmetric set $E_n$). In this case, a direct computation, based on criterion \eqref{e:maxgen}, shows that Condition \ref{item:cond2thm3.1} of Theorem \ref{t:unidejong} holds if and only if, as $n\to \infty$,
\begin{equation}\label{e:degreeo}
\max_{u} |\lambda_{n,u}| = o\big( |E_n|^{1/2}\big).
\end{equation}
Now denote by $\Delta(n)$ the \emph{maximal degree} of the graph associated with the adjacency matrix $A_n$. It is easily seen that, since $\max_u |\lambda_{n,u}| \leq \Delta(n)$,  condition \eqref{e:degreeo} is implied by the stronger relation
\begin{equation}\label{e:maxdd}
\Delta(n) = o\big( |E_n|^{1/2}\big).
\end{equation}
Also, if such a graph is $d_n$-regular, then relation \eqref{e:degreeo} is equivalent to $d_n = o(|V_n|)$. See \cite[Proposition 1.11]{BDMM22} for a comprehensive statement applying to generic collections of i.i.d. random variables, and also \cite[Section 3.1]{Cha08}.

\end{enumerate}

}\end{remark}

\subsection{Reducible sequences verify a CLT}

\label{sec:reducible-CLT}

In order to provide some intuition, we now explain why \emph{reducible sequences satisfy a Lindeberg-Feller-type CLT}, following \cite{CC22}. Assume that the sequence $\{Z_n\}$ is reducible in the sense of Definition \ref{def:irreducibility}, and define 
$$
T_n := \sum_{i=1}^{m_n}\sum_{\substack{(v_1,\ldots,v_d)\in \\  E_n\cap (B_i\times \cdots \times B_i)}  }\!\!q_n(v_1,...,v_d)\prod_{\ell=1}^d X_{v_\ell} :=\sum_{i=1}^{m_n}Z_{n,i}.  
$$
Then, for fixed $n$, one has that the random variables $\{Z_{n,i} : i=1,...,m_n\}$ are stochastically independent and such that ${\bf Var}(Z_{n,i}) = \sigma^2_n(B_i)$. Moreover, Property \ref{redCLTcond2} yields the asymptotic relations (as $n\to \infty$)
\begin{equation}\label{e:thereduction}
{\bf Var}(T_n) = \sum_{i=1}^{m_n} \sigma_n^2(B_i)\sim d!\|q_n\|^2\quad \mbox{and} \quad \mathbb{E}[(\tilde{Z}_n - \tilde{T}_n)^2 ] = 
\mathbb{E}[\tilde{Z}_n^2] - \mathbb{E}[\tilde{T}_n^2 ]\to 0,
\end{equation}
where
$$
\tilde{T}_n :=\frac{1}{\sqrt{d! \|q_n\|^2}}   \sum_{i=1}^{m_n}Z_{n,i} :=\sum_{i=1}^{m_n}\tilde{Z}_{n,i}.
$$
Since every $\tilde{Z}_{n,i}$ is an element of the $d$th Wiener chaos associated with the Gaussian family ${\bf X}$, one can use standard hypercontractive estimates \cite[Corollary 2.8.14]{NP12} to deduce that, for all $p>2$ and for some absolute constant $c_{p,d}$,
\begin{equation}\label{eq:FellerLind}
    \sum_{i=1}^{m_n} \mathbb{E}\big[|\tilde{Z}_{n,i}|^p\big]\leq \frac{c_{p,d}}{(d! \|q_n\|^2)^{p/2}}\sum_{i=1}^{m_n} ( \sigma_n^2(B_i) )^{p/2}
    \le c_{p,d}\, \max_{i=1, \ldots, m_n}
    \sigma_n^2(B_i)^{p/2-1}
    = o(1) \, , \quad n\to \infty,
\end{equation}
where the last relation follows by combining Properties \ref{redCLTcond2} and \ref{redCLTcond3} of Definition \ref{def:irreducibility}. Applying Markov inequality as in \cite[formulae (4.12)---(4.14)]{CC22}, one sees that \eqref{eq:FellerLind} implies that $\tilde{T}_n,\tilde{Z}_n \stackrel{d}{\rightarrow} N\sim \cN (0,1)$ by the Lindeberg-Feller CLT, see e.g.\ \cite[Theorem~4.12]{kallengerg}.

\medskip

The next two sections present several examples of reducible and irreducible sequences, focusing mainly on the case $d=2$, which corresponds to the results of Section~\ref{ss:graphsresults} (except for Example~\ref{e:ridux}\,\eqref{it:ridux-trivial}).  
Throughout Sections~\ref{sec:re}---\ref{subsec:examples}, we consider sequences of simple graphs 
$\{ G_n = (V_n,\mathcal{E}_n) \}$, which may vary from one example to another, and adopt the notation
\[
0 = \mu^{(n)}_1 \leq \cdots \leq \mu^{(n)}_{|V_n|} \leq 2
\]
for the eigenvalues of the normalized Laplacian associated with $G_n$.  
More background can be found in Section \ref{sec:GRAPHS}.

Further illustrations in the case $d\geq 3$ will require some non-trivial notions associated with hypergraphs and their adjacency structure, and will be discussed in the forthcoming Section~\ref{sec:Hypergraphs}, see in particular Examples~\ref{ex:rooklike} and~\ref{e:3uni}.

\subsection{Examples of reducible sequences}
\label{sec:re}

We start by illustrating some explicit examples of \emph{reducible} sequences.

\begin{example}\label{e:ridux}{\rm

\begin{enumerate}
\renewcommand{\theenumi}{\alph{enumi}}
\renewcommand{\labelenumi}{(\theenumi)}

\item\label{it:ridux-trivial}
A trivial example of reducible sequence is obtained by considering the random variables $Z_n:= \sum_{i=0}^{n-1} \prod_{\ell = 1}^d X_{id+\ell}$, $n\geq 1$, appearing in \eqref{eq:exFL}. Indeed, in this case one has that $V_n = \{1,...,dn\} = [dn]$, and one can take $m_n = n$ and 
$$
B_i = B_i(n) = \{ (i-1)d+1,..., id\}, \quad i=1,...,n.
$$

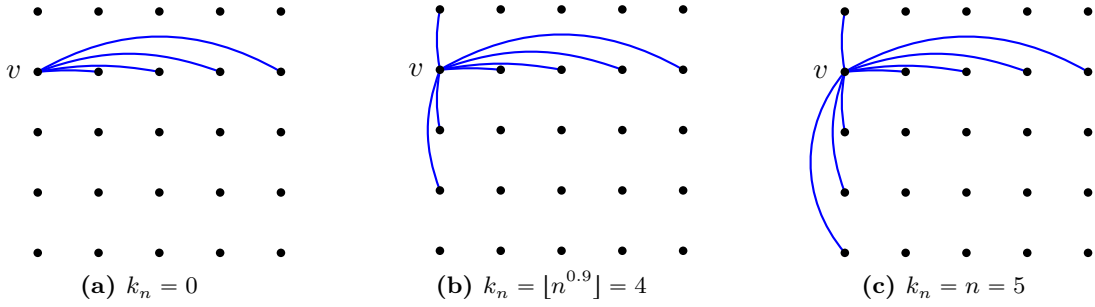
\begin{figure}
  \centering
  \begin{subfigure}[b]{0.3\textwidth}
    \centering
    \begin{tikzpicture}[scale=0.8]
      \draw[blue, thick] (0,3) to[out=5, in=175] (1,3);
      \draw[blue, thick] (0,3) to[out=10, in=170] (2,3);
      \draw[blue, thick] (0,3) to[out=20, in=160] (3,3);
      \draw[blue, thick] (0,3) to[out=30, in=150] (4,3);
      \foreach \x in {0,1,2,3,4}
        \foreach \y in {0,1,2,3,4}
          \fill (\x,\y) circle (2pt);
      \node[left] at (-0.1,3) {$v$};
    \end{tikzpicture}
    \caption{$k_n = 0$}
  \end{subfigure}
  \hfill
  \begin{subfigure}[b]{0.3\textwidth}
    \centering
    \begin{tikzpicture}[scale=0.8]
      \draw[blue, thick] (0,3) to[out=5, in=175] (1,3);
      \draw[blue, thick] (0,3) to[out=10, in=170] (2,3);
      \draw[blue, thick] (0,3) to[out=20, in=160] (3,3);
      \draw[blue, thick] (0,3) to[out=30, in=150] (4,3);
      \draw[blue, thick] (0,3) to[out=260, in=100] (0,2);
      \draw[blue, thick] (0,3) to[out=250, in=110] (0,1);
      \draw[blue, thick] (0,3) to[out=100, in=260] (0,4);
      \foreach \x in {0,1,2,3,4}
        \foreach \y in {0,1,2,3,4}
          \fill (\x,\y) circle (2pt);
      \node[left] at (-0.1,3) {$v$};
    \end{tikzpicture}
    \caption{$k_n = \lfloor n^{0.9}\rfloor = 4$}
  \end{subfigure}
  \hfill
  \begin{subfigure}[b]{0.3\textwidth}
    \centering
    \begin{tikzpicture}[scale=0.8]
      \draw[blue, thick] (0,3) to[out=5, in=175] (1,3);
      \draw[blue, thick] (0,3) to[out=10, in=170] (2,3);
      \draw[blue, thick] (0,3) to[out=20, in=160] (3,3);
      \draw[blue, thick] (0,3) to[out=30, in=150] (4,3);
      \draw[blue, thick] (0,3) to[out=260, in=100] (0,2);
      \draw[blue, thick] (0,3) to[out=250, in=110] (0,1);
      \draw[blue, thick] (0,3) to[out=230, in=130] (0,0);
      \draw[blue, thick] (0,3) to[out=100, in=260] (0,4);
      \foreach \x in {0,1,2,3,4}
        \foreach \y in {0,1,2,3,4}
          \fill (\x,\y) circle (2pt);
      \node[left] at (-0.1,3) {$v$};
    \end{tikzpicture}
    \caption{$k_n = n= 5$}
  \end{subfigure}

  \caption{\footnotesize The edges adjacent to the vertex $v = (2,1)$ in the graph underlying Example \ref{e:ridux}-\eqref{it:ridux-rook}, for $n=5$ and different choices of $k_n$. }
  \label{f:bluespiders}
\end{figure}

\item\label{it:ridux-rook} 
We now build a non-trivial example associated with the sequence $V_n := \{ (i,j) : i,j\in [n]\}$, $n\geq 1$. Let $k_n$ be an integer such that $0\leq k_n\leq n$ and define the set $E_n \subset V_n\times V_n$ as follows: given $v_1 = (a,b)\in V_n,\,  v_2=(i,j) \in V_n$, then $(v_1,v_2) \in E_n$ if and only if:
\begin{itemize}
    \item either $a=i$ with $b\neq j$;
    \item or if $a\neq i$ with $b= j$ and, additionally, $i\leq k_n$. 
\end{itemize}
\smallskip
\emph{Let us henceforth 
assume that $k_n = o(n)$.}
Then, it is an exercise to show that $|E_n|\sim n^3$. Moreover, since $k_n = o(n)$, one has that the family \emph{$\{E_n\}$ is reducible} in the sense of Remark~\ref{r:reducible}-\ref{item:point1}, with $m_n = n$ and 
$$
B_i = B_i(n) = \{(i,\ell) : \ell = 1,...,n\}, \quad i=1,...,n.
$$
As already observed in Remark \ref{r:afterdejong}-\ref{item:rempoint2}, one can identify the function $(v_1,v_2)\mapsto\ind_{E_n} (v_1, v_2)$ with the adjacency matrix $A_n$ of the undirected graph $G_n$ induced by $E_n$ on the vertex set $V_n$. An illustration of such a graph for $n=5$ and different choices of $k_n$ can be found in Fig. \ref{f:bluespiders}. A graphical representation of the reducible set $E_n$ for $n=8,9,11$ and $k_n = \lfloor n^{0.9} \rfloor$ is provided in Fig. \ref{f:blueboxes}: in such a picture, the blue tiles around the diagonal correspond to the disjoint sets $E_n\cap (B_i\times B_i)$, $i=1,...,n$, whereas the collection of all red dots corresponds to the set
$$
E_n\, \backslash\,\bigcup_{i=1}^{m_n} (B_i\times B_i).
$$
The sequence of graphs illustrated in Figure \ref{f:bluespiders}-(a) corresponds to the limiting case $k_n=0$, yielding that $G_n$ is the disjoint union of $n$ copies of $K_n$, that is, $G_n = K_n\sqcup \cdots \sqcup K_n$, $n\geq 1$. Now write $\{\mu_k^{(n)}\}$ for the Since each $G_n$ has exactly $n$ connected components, one has that $0 = \mu_1^{(n)} = \cdots = \mu_n^{(n)}$, $n\geq 1$, so that $\mu_k^{(n)}\to 0$ for every fixed $k$. This behavior is consistent with Theorem \ref{th:mainSPECTRUMgraph}.

\item\label{it:ridux-cube} For $n\geq 2$, the {\em hypercube} $Q_n$ is the $n$-regular graph with vertex set $V_n$ given by all strings of the type $x = (x_1,...,x_n)\in \{0,1\}^n$, and such that $x,y\in V_n$ are connected by an edge ($x\sim y$) if and only if they differ exactly by one coordinate, that is,  $\sum_{i=1}^n |x_i-y_i|=1$. One has that $|V_n| = 2^n$. Also, if one defines 
\begin{equation}\label{f:hyperchedge}
    E_n:= \{ (x,y)\in V_n^2 : x\sim y\}, \quad n\geq 1,
\end{equation}
one has that $|E_n| = n2^n$ (twice the number of edges in $Q_n$). We now show that \emph{the sequence $\{E_n\}$ is reducible} in the sense of Remark \ref{r:reducible}-\ref{item:point1}. To see this, for $h <n$, consider the set $V_h =\{0,1\}^h$ and, for $z \in V_h$, define $$B_n^{(z)} \coloneq \big\{ x \in V_n : x_i=z_i \ \forall i=1,\ldots,h \big\}\,,$$
in such a way that $\big| B_n^{(z)} \big|=2^k$ with $k=n-h$. Choosing sequences $h_n, k_n \to \infty$ as $n \to \infty$ such that  $h_n + k_n = n$, $\frac{k_n}{n}\to 1$ as $n\to\infty$\footnote{Choose for instance $h_n= \lfloor \log n \rfloor$ and $k_n=n-\lfloor \log n \rfloor$.}, one has that the family of partitions $\big\{ B_n^{(z)} : z \in V_{h_n}\big\}$, $n\geq 1$, satisfies the three requirements \ref{redCLTconda}---\ref{redCLTcondc} of Remark \ref{r:reducible}-\ref{item:point1}. Indeed, \ref{redCLTconda} follows since $m_n := | V_{h_n}| = 2^{h_n} \to \infty$. Moreover, for any $z \in V_{h_n}$ one has that $| E_n \cap (B_n^{(z)}\times B_n^{(z)})|= \big| \big\{ (x,y) \in E_n : x,y \in B_n^{(z)} \big\} \big| =k_n 2^{k_n}$, which implies that \ref{redCLTcondb} and \ref{redCLTcondc} are verified because 
\begin{equation*}
    \frac{| E_n \cap (B_n^{(z)}\times B_n^{(z)})|}{|E_n|} =\frac{k_n2^{k_n}}{n2^{n}}=\frac{k_n}{n} \frac{1}{2^{h_n}} \, \underset{n \to \infty}{\longrightarrow}\, 0\,, 
\end{equation*}
uniformly in $z \in V_{h_n}$, and
\begin{equation*}
    \frac{ \sum_{z \in V_{h_n}}| E_n \cap (B_n^{(z)}\times B_n^{(z)})|}{|E_n|} = \frac{2^{h_n}k_n2^{k_n}}{n2^{n}}=\frac{k_n}{n}\, \underset{n \to \infty}{\longrightarrow}\, 1\,.
\end{equation*}
We observe that, classically, $Q_n = K_2^{ {\cpow}  n }$, that is, $Q_n$ is the Cartesian product of $n$ copies of the complete graph over two vertices.
\end{enumerate}
}
\end{example}

\begin{figure}
  \centering
  \begin{subfigure}[b]{0.3\textwidth}
    \includegraphics[width=\textwidth]{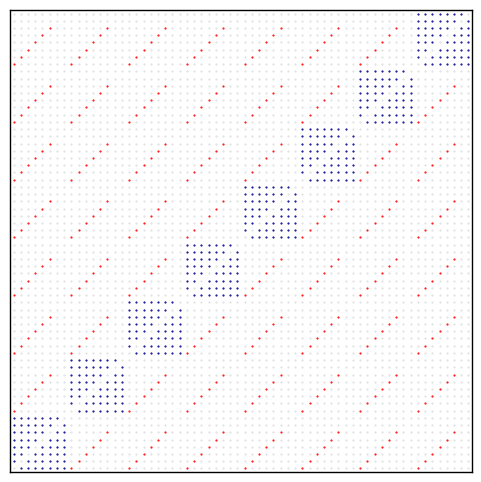}
    \caption{$n=8$}
  \end{subfigure}
  \hfill
  \begin{subfigure}[b]{0.3\textwidth}
    \includegraphics[width=\textwidth]{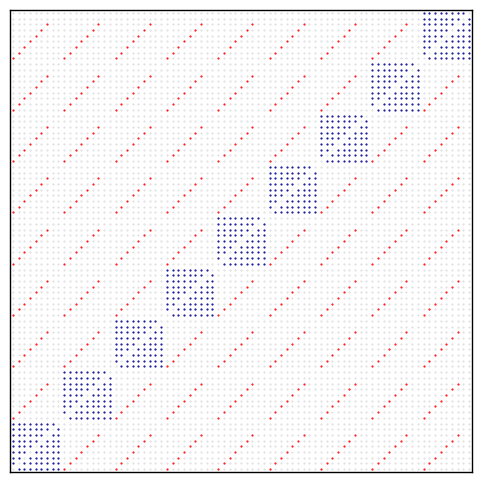}
    \caption{$n=9$} 
  \end{subfigure}
  \hfill
  \begin{subfigure}[b]{0.3\textwidth}
    \includegraphics[width=\textwidth]{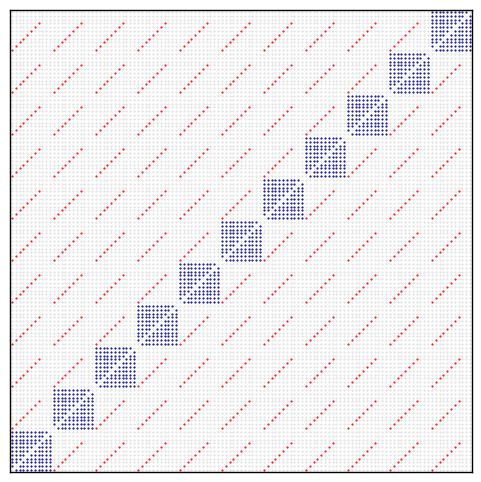}
    \caption{$n=11$} 
  \end{subfigure}
  \caption{\footnotesize The set $E_n\subset V_n\times V_n$ (represented as the union of red dots and blue dots) from Example \ref{e:ridux}-\eqref{it:ridux-rook}, for $n=8,9,11$ and $k_n = \lfloor n^{0.9}\rfloor$, where each element of $V_n = \{(i,j) : i,j\in [n]\}$ has been identified with an element of $[n^2]$ by ordering $V_n$ according to the lexicographic order. Pairs $(v_1,v_2)$ not belonging to $E_n$ are represented as grey dots. In this case, the reducibility of $\{E_n\}$ implies that the red dots can be removed from $E_n$ without affecting the asymptotic behavior of the homogeneous sum $Z_n$ defined in \eqref{eq:defpolchaos} for $q_n\equiv 1$. 
  \label{f:blueboxes}}
\end{figure}

\subsection{Examples of irreducible sequences}\label{subsec:examples}

In this section, we make use of the lexical conventions put forward in Definition \ref{def:irreducibilegraphs} and present several examples of irreducible sequences of graphs that generate irreducible CLTs.

\smallskip 

We start with a general fact showing that, under fairly general conditions, Cartesian products of regular graphs always generate irreducible CLTs. For definitions and properties of Cartesian products, we refer to Appendix \ref{appendixB}, as well as to \cite[Sections 4, 5 and 33]{ProductHandbook} or \cite[Section 1.4]{BrHaBook}. 

\begin{proposition}\label{p:bella} Consider a sequence $G^0_n = (V_n^0, \cE_n^0)$, $n\geq 1$, of connected graphs, with normalized Laplace spectra given by
$$
0 = \mu_1^{(0,n)} < \mu_2^{(0,n)}\leq \cdots \leq \mu_{|V^0_n|}^{(0,n)}\leq 2, \quad n\geq 1.
$$
 Assume that {\rm (i)} $G^0_n$ is $d_n$-regular for some $d_n\geq 1$, {\rm (ii)} $|V^0_n|\to\infty$, and {\rm (iii)} $\liminf_{n\to\infty} \mu_2^{(0,n)}  >0 $. Then, for any fixed integer $m\geq 2$, the sequence of product graphs
$$
G_n := \left(G^0_n\right)^{\cpow m}, \quad n\geq 1,
$$
generates an irreducible CLT, in the sense of Definition \ref{def:irreducibilegraphs}. 
\end{proposition}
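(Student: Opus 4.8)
The plan is to verify the two ingredients required by Definition~\ref{def:irreducibilegraphs}: that the sequence $\{Z_n\}$ generated by $\{G_n\}$ satisfies the spectral gap condition \eqref{e:mucca} of Theorem~\ref{th:mainSPECTRUMgraph} at level $k=2$ (which delivers irreducibility), and that $\tilde Z_n \to N$ in the sense of \eqref{e:normadue} (which delivers the CLT). Neither step uses anything deep beyond two results already available: the spectral description of Cartesian products of regular graphs (Appendix~\ref{appendixB}) and the fourth-moment criterion of Remark~\ref{r:afterdejong}-\ref{item:rempoint2}.

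\emph{Step 1 (spectrum of the product).} Since $G^0_n$ is $d_n$-regular, its normalized Laplacian is $\mathcal{L}^0_n = I - \tfrac1{d_n} A^0_n$, so its adjacency eigenvalues are $\lambda^{(0,n)}_i = d_n\,(1-\mu^{(0,n)}_i)$. The product $G_n=(G^0_n)^{\cpow m}$ is $(m d_n)$-regular, its adjacency eigenvalues are the sums $\lambda^{(0,n)}_{i_1}+\cdots+\lambda^{(0,n)}_{i_m}$ over $(i_1,\ldots,i_m)\in\{1,\ldots,|V^0_n|\}^m$, and hence the eigenvalues of its normalized Laplacian $\mathcal{L}_n$ are exactly
\[
\mu^{(n)}_{(i_1,\ldots,i_m)} \;=\; \frac1m\sum_{j=1}^m \mu^{(0,n)}_{i_j}\,.
\]
The smallest such value is $0$ (all $i_j$ the index of the trivial eigenvalue); since $G^0_n$ is connected, the next one, obtained by taking a single index equal to $2$ and the rest trivial, is $\mu^{(n)}_2 = \tfrac1m\,\mu^{(0,n)}_2$. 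Hypothesis (iii) then gives $\liminf_n \mu^{(n)}_2 = \tfrac1m\,\liminf_n \mu^{(0,n)}_2 > 0$, so \eqref{e:mucca} holds with $k=2$ and Theorem~\ref{th:mainSPECTRUMgraph} yields the irreducibility of $\{Z_n\}$. (In particular $G_n$ is connected, consistently with the fact that the Cartesian product of connected graphs is connected; and $G_n$ is simple with no isolated vertex since $G^0_n$ is, so the graph conventions of Definition~\ref{def:irreducibilegraphs} and Convention~\ref{conv;simple} are respected, with $|V_n|=|V^0_n|^m\to\infty$ and $|\mathcal{E}_n|\to\infty$.)

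\emph{Step 2 (the CLT).} Here I would invoke the universal de Jong theorem (Theorem~\ref{t:unidejong}) together with Remark~\ref{r:afterdejong}-\ref{item:rempoint2}: for $d=2$ and $q_n\equiv1$, the CLT \eqref{e:normadue} holds as soon as the maximal degree $\Delta(n)$ of $G_n$ satisfies $\Delta(n)=o(|E_n|^{1/2})$, cf.\ \eqref{e:maxdd}. Now $G_n$ is $(m d_n)$-regular on $|V^0_n|^m$ vertices, so $\Delta(n)=m d_n$ and $|E_n| = m d_n\,|V^0_n|^m$; since $G^0_n$ is a simple graph on $|V^0_n|$ vertices we have $d_n\le |V^0_n|-1$, whence
\[
\frac{\Delta(n)^2}{|E_n|} \;=\; \frac{m\,d_n}{|V^0_n|^m} \;\le\; \frac{m}{|V^0_n|^{\,m-1}} \;\longrightarrow\; 0 \qquad (n\to\infty),
\]
using $m\ge 2$ and $|V^0_n|\to\infty$. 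Thus \eqref{e:normadue} holds, and combining with Step~1 we conclude that $\{G_n\}$ generates an irreducible CLT.

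\emph{On the main difficulty.} There is no genuine obstacle here: the proof is a bookkeeping assembly of two black boxes. The only points deserving care are the normalization conventions — the factor $1/m$ appearing in the Laplacian spectrum of the Cartesian product, and the counting of $|E_n|$ as ordered pairs (twice the number of edges) — and checking that it is legitimate to apply the spectral criterion at the modest level $k=2$ rather than needing a large number of small Laplace eigenvalues, which is precisely what makes the regularity $+$ uniform spectral gap hypotheses on the single factor $G^0_n$ sufficient.
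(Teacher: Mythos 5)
Your proposal is correct and follows essentially the same route as the paper's proof: the spectral description of the Cartesian power (Appendix~\ref{appendixB}) gives $\mu_2^{(n)}=\mu_2^{(0,n)}/m$, so Theorem~\ref{th:mainSPECTRUMgraph} with $k=2$ yields irreducibility, while the CLT follows from Remark~\ref{r:afterdejong}-\ref{item:rempoint2} via the same bound $m d_n/|V_n^0|^m\le m/|V_n^0|^{m-1}\to 0$ (you phrase it through the maximal-degree condition \eqref{e:maxdd}, the paper through the regular-graph equivalence, but the computation is identical).
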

\begin{proof} Writing $|V^0_n| := N_n$, one has that $G_n$ is a $md_n$-regular connected graph with $N_n^m$ vertices. Denote by $0 = \mu_1^{(n)} < \mu_2^{(n)}\leq \cdots \leq \mu_{N_n^m}^{(n)}\leq 2$, $ n\geq 1$, the normalized Laplace eigenvalues of $G_n$. Then, using the content of Appendix \ref{appendixB} (see also \cite[Section 1.4.6]{BrHaBook}) and the regularity of $G_n$ one has that $\mu_2^{(n)} = \mu_2^{(0,n)}/m $ (with multiplicity equal to $m$ times the multiplicity of $\mu_2^{(0,n)}$ in $G^0_n$), and the irreducibility of $\{G_n\}$ immediately follows from Theorem \ref{th:mainSPECTRUMgraph}. The fact that $\{G_n\}$ generates a CLT is a consequence of Remark \ref{r:afterdejong}-\ref{item:rempoint2}, and of the property that, trivially, 
$$
\frac{m d_n}{N_n^m}\leq \frac{m}{N_n^{m-1}} \to 0, \quad \mbox{as}\,\, n\to\infty.
$$
\end{proof}

\begin{remark}\label{r:mecha}{\rm We stress that Proposition~\ref{p:bella} holds without any assumption on the numerical sequence $\{d_n\}$. In particular, one may have $d_n \asymp |V_n|$, in which case (by virtue of Remark~\ref{r:afterdejong}-\ref{item:rempoint2}) the sequence $\{G_n^0\}$ {\em does not} generate a CLT (and is therefore irreducible, see Subsection~\ref{sec:reducible-CLT}). Heuristically, Proposition~\ref{p:bella} shows that, given such a $\{G_n^0\}$, the $m$-fold Cartesian power operation  
\[
G_n^0 \mapsto \big(G_n^0\big)^{\cpow m}
\]  
creates enough sparsity to induce Gaussian fluctuations, yet without altering the irreducible structure. See the forthcoming Examples \ref{item:completegraph} and \ref{item:bipartite} for two illustrations of this phenomenon.

}  
\end{remark}

To analyze some of the examples described below, we will use the following facts:
\begin{itemize}
\item[--] for $n\geq 2$, the complete graph $K_n$ over $n$ vertices has normalized Laplace spectrum given by
\begin{equation}\label{e:completespectrum}
\left\{0,\frac{n}{n-1}\right\}, \,\,\mbox{with respective multiplicities $1$ and $(n-1)$} \,; 
\end{equation}

\item[--] for $n\geq 2$, the complete bipartite graph $K_{n,n}$ over $2n$ vertices has normalized Laplace spectrum given by
\begin{equation}\label{e:bipartitespectrum}
\left\{0,1,2\right\}, \,\,\mbox{with respective multiplicities $1$, $2(n-1)$ and 1} \,. 
\end{equation}

\end{itemize} 

See e.g. \cite[Section 1.4]{BrHaBook}. For the rest of the section, and consistently with the notation adopted before, we write $\{X_i : i\geq 1\}$ to indicate a collection of i.i.d. $\mathcal{N}(0,1)$ random variables.

\medskip  

\noindent{\it Some explicit examples of irreducible sequences ($d=2$)}:

\smallskip

\begin{enumerate}[label=(\alph*)]

\item ({\em Expanders}) \label{item:expanders} Fix $m\geq 2$ and $\epsilon>0$, and consider a sequence of $m$-regular graphs $G_n = (V_n, \cE_n)$, $n\geq 1$, such that $|\cE_n|, \, |V_n| \to \infty$. According to the classical definition given e.g. in \cite[Definition 2.2]{hlw}, the family $\{G_n\}$ is said to be a $\epsilon$-{\em expander} if 
$$
\min_{0< |S|\leq |V_n|/2} \frac{E(S, \bar{S})}{{\rm Vol}(S)} := \tilde{\varphi}_2(G_n)\ge \frac{\epsilon}{m}
$$ 
for all $n$, where $E(S, \bar{S})$ indicates the number of edges of $G_n$ connecting $S$ with its complement $\bar{S}$, and ${\rm Vol}(S)$ is the sum of the degrees of the vertices in $S$ (see Section~\ref{sec:GRAPHS}, in particular the discussion around formula \eqref{e:tilde}, for more details). Using the Cheeger-type inequality stated below in \eqref{e:truecheeger}, one sees that, in this case, $\liminf_{n\to\infty} \mu_2^{(n)}\ge \epsilon^2/(2 m^2)>0$. Since $m$ is fized while $|V_n| \to \infty$, in particular $m/|V_n|\to 0$, one can now combine Remark~\ref{r:afterdejong}-\ref{item:rempoint2} with Theorem~\ref{th:mainSPECTRUMgraph} to infer that $\{G_n\}$ generates an irreducible CLT, in the sense of Definition \ref{def:irreducibilegraphs}. An analogous conclusion holds in the case of a sequence of $m_n$-regular graphs verifying $\tilde{\varphi}_2(G_n)\ge \epsilon_n/m_n$ for some $\epsilon_n$ such that $\liminf_{n\to\infty} \epsilon_n/m_n>0$, and such that $m_n = o(|V_n|)$. Canonical examples of expanders include {\em Ramanujan graphs} \cite{rammurtysurvey, hlw, LuboSurvey} and 
 {\em random $d$-regular graphs} (which are expanders with high probability,
 see \cite{PuderInventiones, LarryGraphs, FriedmanSecondEig}).

\medskip

\item \label{item:completegraph}({\em Cartesian products of complete graphs}) This example significantly expands the content of Example \eqref{it:ridux-cube} of Section~\ref{sec:re}. For integers $q,m \geq 2$, consider the the $m$-fold Cartesian product $G(q,m) := K_q^{{\cpow} m}$. Then, combining \eqref{e:completespectrum} (for $n=q$) with the content of Appendix \ref{appendixB} (see also \cite[Proposition 33.6]{ProductHandbook} or \cite[Section 1.4.6]{BrHaBook}), one sees that $G(q,m)$ is a $m(q-1)$ regular graph over $q^m$ vertices with a normalized Laplace spectrum given by
$$
\left\{ \frac{\ell q}{m(q-1)}  : \ell=0,1,...,m \right\},
$$
and respective multiplicities
$$
\binom{m}{\ell } (q-1)^\ell , \quad \ell =0,...,m.
$$
We can immediately draw the following conclusions:
\smallskip
\begin{enumerate}[label=(\roman*)]
\item \label{item:ci} For fixed $m\geq 2$, the sequence of graphs $G_n:= G(n,m) = K_n^{\cpow m}$, $n\geq 1$, generates an irreducible CLT as a direct consequence of Proposition \ref{p:bella} in the case $G_0^n = K_n$ (taking into account \eqref{e:completespectrum}). Note that $G(n,2) =K_n^{{\cpow} 2} $ corresponds to the ``Rook's graph'', studied e.g. in \cite{chessboardgraphs}. It is interesting to observe that the sequence $\{ G(n,2)\} $ can be identified with the sequence $(V_n, \cE_n)$, $n\geq 1$, from Example \ref{e:ridux}-\eqref{it:ridux-rook} \emph{in the special case $k_n =n$} (as illustrated in Figure \ref{f:bluespiders}-(c)). This framework also corresponds to the example introduced in Subsection~\ref{sss:adhoc}, with $\bbS_{\mathsf{v}}(a) = \{a\} \times \{1,\ldots, n\}$
and $\bbS_{\mathsf{h}}(b) = \{1,\ldots, n\} \times \{b\}$.
\item \label{item:cii} For fixed $q\geq 2$, the sequence of graphs $G_n := G(q,n)$ is such that, for all $k\geq 2$, $\lim_{n\to \infty} \mu_k^{(n)} = 0$ and, in principle, one cannot use Theorem \ref{th:mainSPECTRUMgraph} to determine whether $\{G_n\}$ is reducible or not. The reduciblity of $\{G_n\}$ can be established by using a direct argument. To see this, observe that $G_n$ is isomorphic to the graph whose vertices are given by all vectors of the form $v = (v_1,...,v_n)$, where $v_i\in \{0,1,...,q-1\}$ and $v\sim w$ if and only if $v_i\neq w_i$ for exactly one index $i=1,...,n$; in particular, $G(2,n) \simeq Q_n$, the hypercube considered in Example \ref{e:ridux}-\eqref{it:ridux-cube}. The reducibility of $\{G_n\}$ can now be deduced by a straightforward variation of the arguments rehearsed in Example \ref{e:ridux} (the details are left to the reader).
\end{enumerate}
We observe that, if $K_n$ is identified with the complete graph over $[n]$, then the sequence of homogeneous sums generated by $\{K_n\}$ (in the sense of Definition \ref{def:irreducibilegraphs}) is given for $n\in\N$ by
$$
Z_n = \sum_{1\leq i\neq j \leq n} X_iX_j
 = (X_1 + \ldots + X_n)^2
- (X_1^2 + \ldots + X_n^2)  \,.
$$
It is easily seen that ${\bf Var} (Z_n)= 2n(n-1)$ and that, as $n\to\infty$, $\tilde{Z}_n:= Z_n/\sqrt{2n(n-1)}$ converges in distribution towards $\frac{1}{\sqrt{2}}(N^2 - 1)$ with $N \sim \mathcal{N}(0,1)$, that is a multiple of a centered $\chi^2$ random variable with one degree of freedom. See Remark \ref{r:mecha}, as well as \cite[formula (1.4)]{BDMM22}. Thus, as announced, the sequence of graphs $K_n$ does not generate a CLT, while for any fixed $m \ge 2$ the cartesian products $K_n^{\cpow m}$ generate (an irreducible) CLT.

\medskip 

\item \label{item:bipartite} ({\em Cartesian products of bipartite graphs}) Fix $m\geq 2$. By virtue of \eqref{e:bipartitespectrum}, another direct application of Proposition \ref{p:bella} in the case $G^0_n = K_{n,n}$, shows that the sequence $G_n = K_{n,n}^{\cpow m}$, $n\geq 1$, generates an irreducible CLT. We observe that $G_n$ can be identified with the graph having vertex set $V_n = \{(v_1,...,v_m) : v_i\in [2n]\}$ and such that $v\sim w$ if and only if $v_i\neq w_i$ for exactly one index $i$, and either $v_i\in [n]$ and $w_i\in \{n+1,...,2n\}$, or $w_i\in [n]$ and $v_i\in \{n+1,...,2n\}$. See Figure \ref{f:star} for an illustration of this graph in the case $n=3$ and $m=2$. If $K_{n,n}$ is identified with the bipartite graph over $[2n]$, then the sequence of homogeneous sums generated by $\{K_{n,n}\}$ can be written as
$$
Z_n =2 \left(\sum_{i=1}^n X_i\right)\times \left( \sum_{j=n+1}^{2n} X_j \right) \stackrel{\rm law}{=} 2n\,X_1X_2, \quad n\geq 1.
$$
See again Remark \ref{r:mecha}, as well as \cite[Example 2.2]{BDMM22}.
Also in this case, the sequence of graphs $K_{n,n}$ does not generate a CLT, but for any fixed $m \ge 2$ the Cartesian products $K_{n,n}^{\cpow m}$ generate (an irreducible) CLT.

\medskip

\item \label{item:cartprod(e)}({\em Cartesian products of graphs with isolated vertices}) Let $\{m_n\}$ be an integer-valued sequence such that $m_n\to \infty$ and consider the framework of Subsection~\ref{sss:adhoc} for $\bbS_{\mathsf{v}}(a) = \{a\}\times \{1,..., m_n \}$ and $\bbS_{\mathsf{h}}(b) = \{1,..., m_n \} \times \{b\}$. In this case, it is easily seen that the indicator $\ind_{E_n}$ coincides with the adjacency matrix of the graph $$ U(n):= (K_{m_n}\sqcup \bar{K}_{n-m_n}) \raisebox{-0.25ex}{\scalebox{1.8}{$\square$}} (K_{m_n}\sqcup \bar{K}_{n-m_n}),$$ where $K_{m_n}$ is the complete graph over $m_n$ vertices and $\bar{K}_{n-m_n}$ is the trivial graph with $n-m_n$ isolated vertices. A direct inspection shows that the graph $U(n)$ has $(n-m_n)^2$ isolated vertices, $2(n-m_n)$ connected components isomorphic to $K_{m_n}$ and one ``giant'' component isomorphic to $K_{m_n} \raisebox{-0.25ex}{\scalebox{1.8}{$\square$}} K_{m_n}$ (some visualizations of $E_n$ in the special setting of the present example are provided in Figure \ref{f:noncoveringblueboxes}). Note that we already know that the sequence $\{K_{m_n}\}$ is associated with an irreducible CLT (see Point \ref{item:ci} in Example \ref{item:completegraph} of the present section).  

We write $G_n$, $n\geq 1$, to denote the graph obtained from $U(n)$ after removing all isolated vertices. Using once again \eqref{e:completespectrum}, one easily deduces that, for each $n$, the normalized Laplacian of $G_n$ has spectrum $\big\{0, m_n/2(m_n-1), m_n/(m_n-1)\big\}$, with respective multiplicities $1+2(n-m_n)$, $2(m_n-1)$ and $2(m_n-1)^2+ 2(n-m_n)(m_n-1)$. 
One can verify the following (details are left to the reader):
\begin{enumerate}[label=(\roman*)]
\item \label{item:ei} If $m_n = o(n)$ then $\{G_n\}$ generates a reducible CLT;
\item \label{item:eii} If $m_n/n \to \beta \in (0,1)$, then $\{G_n\}$ generates an irreducible CLT\footnote{Since the total number of edges in $K_{m_n} \raisebox{-0.25ex}{\scalebox{1.8}{$\square$}} K_{m_n}$ scales as $\asymp \beta^3 n^3$, one has that the corresponding sequence of quadratic form $\{Z_n\}$ (defined as in \eqref{eq:bilformsubsec}) is such that each $Z_n$ can be written as as the sum of two independent homogeneous sums of order $2$,
$$
Z_n = Z'_n + Z''_n,
$$
in such a way that, as $n\to \infty$, $\{Z'_n\}$ is irreducible and such that
$$
{\bf Var}(Z'_n)\sim \beta \, {\bf Var}(Z_n).
$$
This implies the irreducibility of $\{Z_n\}$.}, and $\lim_{n\to\infty}\mu_k^{(n)}= 0$ for all $k$ since the multiplicity of the zero Laplace eigenvalue is $1+2(n-m_n)\to\infty$;
\item \label{item:eiii} If $m_n/n \to 1$, then $\{G_n\}$ generates an irreducible CLT (by the same argument as for the case $\beta\in (0,1)$), and there exists an integer $k\geq 2$ such that $\liminf_{n}\mu_k^{(n)}> 0$ if and only if $\limsup_n (n-m_n)<\infty$ (and, in this case, $\liminf_{n}\mu_k^{(n)}= 1/2$).
\end{enumerate}

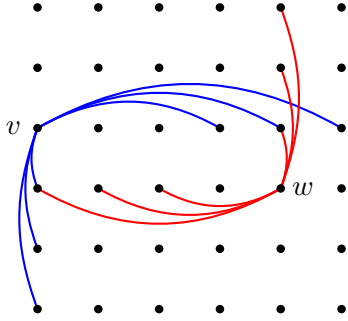
\begin{figure}[t]
  \centering
  \begin{tikzpicture}[scale=0.8]

    \draw[blue, thick] (0,3) to[out=30, in=150] (5,3);
\draw[red, thick] (0,2) to[out=-30, in=-150] (4,2);
\draw[red, thick] (1,2) to[out=-30, in=-150] (4,2);
\draw[red, thick] (2,2) to[out=-30, in=-150] (4,2);
\draw[red, thick] (4,3) to[out=290, in=70] (4,2);
\draw[red, thick] (4,4) to[out=290, in=70] (4,2);
\draw[red, thick] (4,5) to[out=290, in=70] (4,2);

    \draw[blue, thick] (0,3) to[out=30, in=150] (4,3);
\draw[blue, thick] (0,3) to[out=30, in=150] (3,3);
 
  \draw[blue, thick] (0,3) to[out=250, in=110] (0,2);
    
   \draw[blue, thick] (0,3) to[out=250, in=110] (0,1); 
   \draw[blue, thick] (0,3) to[out=250, in=110] (0,0);  

    \foreach \x in {0,1,2,3,4,5}
      \foreach \y in {0,1,2,3,4,5}
        \fill (\x,\y) circle (2pt);

    \node[left] at (-0.1,3) {$v$};
\node[right] at (4.02,2) {$w$};
  \end{tikzpicture}
  \caption{\footnotesize In blue: the edges adjacent to the vertex $v = (3,1)$ in the graph $(K_{3,3})^{\cpow 2}$. In red: the edges adjacent to the vertex $w = (4,5)$, in the same graph.}
  \label{f:star}
\end{figure}

\end{enumerate}

\needspace{3\baselineskip}
\section{Irreducibility and graph spectra: background and proofs}\label{sec:GRAPHS}
In this section we prove Theorem \ref{th:mainSPECTRUMgraph}, yielding a characterization of irreducible sequences $\{Z_n\}$ defined as in \eqref{eq:bilformsubsec}.

\subsection{Preliminaries: some connectivity estimates}\label{ss:pregraph}

\subsubsection{General setting}\label{ss:generalgraph}{ Let $G = (V, \mathcal{E})$ be a finite (simple, undirected, loop-free) graph with $|V|=N\ge 2$ vertices, each one having a possibly different {\em degree} $d(v) \coloneq | \{ e \in \mathcal{E} : v \in e \} |\,,$ $v \in V$. {As usual, we regard edges $e \in \mathcal{E}$ as {unordered pairs}
$e = \{v,w\}$ and make extended use of the basic {\em degree sum formula} $\sum_{v\in V} d(v) = 2 |\mathcal{E}|$. In resonance with Convention \ref{conv;simple}, for our purposes there is no loss of generality in assuming that the graph has no isolated vertices, that is, $d(v)>0$ for all $v\in V$. For future use, we also fix an (arbitrary) enumeration $\{v_1,...,v_N\}$ of $V$.}

We write $\mathbb{R}^V$ to indicate the set of all functions $V\to \R$, whose generic element is written ${\bf x} = \{ x_v : v\in V\}$. The space $\mathbb{R}^V$ is canonically identified with the Euclidean space $\R^N$, while the Euclidean scalar product and norm on $\R^V$ are denoted by $\langle \cdot , \cdot \rangle$ and $\|\cdot\|$, respectively. Given $S\subseteq V$, we use the symbol ${\bf 1}_S$ to indicate the element of $\R^V$ such that $x_v$ equals one or zero according as $v\in S$ or $v\in \widebar{S}$, where we used the notation
\begin{equation}\label{e:complement}
    \widebar{S}:= V\backslash S \,.
\end{equation}
Given $S,T\subseteq V$, we write
\begin{equation}\label{eq:E(S,T)}
    E(S,T) := \big| \big\{ e = \{s,t\}\in \mathcal{E} : s\in S,\, t\in T \big\}\big|,
\end{equation}
that is, $E(S,T)$ is the number of edges connecting $S$ and $T$. We denote by $A := \{A(v,w) = \ind_{\{v,w\} \in \mathcal{E}} : v,w\in V\}$ the $N\times N$ \emph{adjacency} matrix of $G$ 
and write 
$$
\lambda_1 \ge \lambda_2\geq \cdots \ge \lambda_N
$$
to indicate its eigenvalues, in such way that $$\sum_{i=1}^N \lambda_i^2= {\rm Trace}(A^2)= \sum_{v \in V} A^2(v,v) = \sum_{v \in V} d(v) = 2|\mathcal{E}|.$$ We observe that, for every $S \subseteq V$, one has that
\begin{equation*}
    E(S,S) = \frac{1}{2} \langle {\bf 1}_S, A \, {\bf 1}_S \rangle \,, \qquad
    E(S,\widebar{S}) = \langle {\bf 1}_S, A \, {\bf 1}_{\widebar{S}} \rangle \,,
\end{equation*}
where the factor $\frac{1}{2}$ is motivated by the fact that each edge $e = \{v,w\}$ with $v,w \in S$ is associated with two distinct ordered pairs $(v,w)$ and $(w,v)$.

The {\em Laplacian} matrix of $G$ is defined as
$$
L \coloneq D - A, \qquad \text{where} \qquad
D(v,w) \coloneq d(v) \, \ind_{v=w}\,,\,\,v,w\in V,
$$
that is: $D$ is the $N\times N$ diagonal {\em degree matrix}, with diagonal entries $d(v_1),\ldots,d(v_N)$. We also introduce the \emph{normalized Laplacian} of $G$ as
$$\cL \coloneq I - D^{-\frac{1}{2}}AD^{-\frac{1}{2}}\,,$$
whose eigenvalues (see \cite[Section 3.2]{trevisanLN}) satisfy
$$
0=\mu_1\le \mu_2\le \cdots \le \mu_N \le 2.
$$
The second eigenvalue $\mu_2$ is the so-called {\em spectral gap} of $G$. In the special case where $G$ is a $d$-{\em regular graph}, i.e.\ $d(v)=d$ for all $v \in V$ and for some $d \ge 1$, the normalized Laplacian simplifies to $\cL = I - \frac{1}{d}A$ and $\mu_i = 1 - \frac{\lambda_i}{d}, \,\, i=1,\ldots,N$.

\smallskip

For every nonzero vector ${\bf x}\in \mathbb{R}^V$, and writing ${\bf y}\coloneq D^{-\frac{1}{2}} {\bf x} $, we define the {\em Raleygh quotient} associated with $\cL$ and ${\bf y}$ to be the ratio
$$
\frac{\langle {\bf x}, \cL\,{\bf x}\rangle}{\|{\bf x}\|^2} = \frac{\langle {\bf x}, D^{-\frac{1}{2}} LD^{-\frac{1}{2}}\,{\bf x}\rangle}{\|{\bf x}\|^2}=\frac{\langle {\bf y},  L\,{\bf y}\rangle}{\big\|D^{\frac{1}{2}}{\bf y}\big\|^2}=\frac{\sum_{u \sim v} (y_u-y_v)^2}{\sum_{v \in V}d(v) \, y_v^2} =: R_\cL({\bf y})\,.
$$
\begin{remark}\label{r:minmax}{\rm In the sequel, we will use the following variational characterisation of the eigenvalues of $\cL$ (see e.g. \cite[Section 1.3]{trevisanLN}): for all $k=1,...,N$,
\begin{equation}\label{e:quotients}
\mu_k = \min_{\substack{\mathcal{M}\subseteq \R^V \\ {\rm dim}(\mathcal{M})=k }}\max_{{\bf z}\in \mathcal{M}\backslash \{0\}} R_\cL({\bf z}),
\end{equation}
where the minimum runs over all $k$-dimensional subspaces $\mathcal{M}$ of $\R^V$. It follows that $\mu_1=0$ and the multiplicity of $\mu_1$ is equal to the number of connected components of $G$. 
}
\end{remark}

\subsubsection{Cheeger's inequalities}\label{ss:cheeger} To introduce Cheeger's inequalities, we first define a quantity that measures how well a subset \( S \subset V \) is connected to its complement \( \bar{S} \), relative to the total number of edges incident to the vertices in \( S \). Given $\emptyset \neq S\subseteq V $, the {\em edge expansion} of $S$ is defined as
$$
\phi(S) := R_\cL({\bf 1}_S) = \frac{E(S,\widebar{S}) }{\text{vol}(S)}, \qquad \text{where we set} \quad\text{vol}(S)\coloneq \sum_{v \in S}d(v) \,.
$$
and the second equality follows from the relation $\langle{\bf 1}_S , (D - A) {\bf 1}_S\rangle = \text{vol}(S) - 2 E(S,S)=E(S,\widebar{S})$ (see Figure \ref{fig:edgeexp} for illustrations). In particular, one has trivially that $\phi(V) = 0$. The quantity $\text{vol}(S)$ is called the \emph{volume of $S$}, and it equals $d |S|$ when $G$ is $d$-regular. 
}

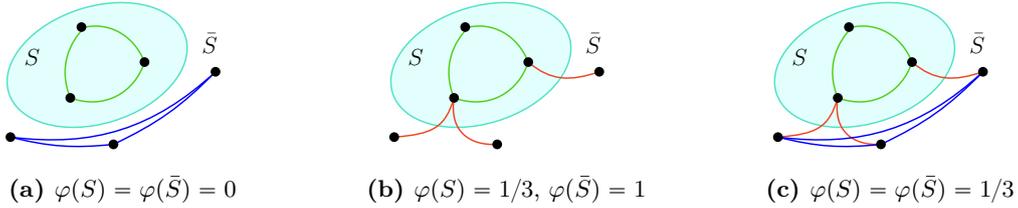
\begin{figure}[h]
  \centering

  \begin{subfigure}[b]{0.30\linewidth}
    \centering
    \resizebox{0.7\linewidth}{!}{%
    \begin{tikzpicture}[x=0.75pt,y=0.75pt,yscale=-1,xscale=1]
      \path[use as bounding box] (250,10) rectangle (410,120);

      \draw[color=Sborder, fill=Sfill, line width=0.75]
        (253.35,73.55) .. controls (247.43,52.17) and (269.75,26.53) ..
        (303.2,16.3) .. controls (336.65,6.06) and (368.56,15.09) ..
        (374.48,36.48) .. controls (380.4,57.86) and (358.08,83.5) ..
        (324.63,93.74) .. controls (291.18,103.97) and (259.27,94.94) ..
        (253.35,73.55) -- cycle;

      \draw[color=Gedge, line width=0.75] (295.57,77.68) .. controls (319.35,86.34) and (339.63,73.68) .. (346.29,53.02);
      \draw[color=Gedge, line width=0.75] (295.57,77.68) .. controls (287.02,56.35) and (295.26,39.02) .. (305.4,29.02);
      \draw[color=Gedge, line width=0.75] (346.29,53.02) .. controls (339,34.69) and (322.52,25.36) .. (305.4,29.02);

      \draw[blue, line width=0.75] (254.69,104.67) to[bend left=10] (325.05,109.67);
      \draw[blue, line width=0.75] (325.05,109.67) to[bend left=10] (394.78,59.68);
      \draw[blue, line width=0.75] (254.69,104.67) to[bend left=25] (394.78,59.68);

      \foreach \x/\y in {
        303.18/28.93, 295.57/77.68, 346.29/53.02,
        394.78/59.68, 254.69/104.67, 325.05/109.67}
        \fill (\x,\y) circle (2.5pt);

      \node at (269.44,49.25) [font=\large] {$S$};
      \node at (390.78,39.68) [font=\large] {$\bar{S}$};
    \end{tikzpicture}
    }
    \caption{$\varphi(S) = \varphi(\bar{S}) = 0$}
  \end{subfigure}
  \quad
  \begin{subfigure}[b]{0.30\linewidth}
    \centering
    \resizebox{0.7\linewidth}{!}{%
    \begin{tikzpicture}[x=0.75pt,y=0.75pt,yscale=-1,xscale=1]
      \path[use as bounding box] (250,10) rectangle (410,120);

      \draw[color=Sborder, fill=Sfill, line width=0.75]
        (253.35,73.55) .. controls (247.43,52.17) and (269.75,26.53) ..
        (303.2,16.3) .. controls (336.65,6.06) and (368.56,15.09) ..
        (374.48,36.48) .. controls (380.4,57.86) and (358.08,83.5) ..
        (324.63,93.74) .. controls (291.18,103.97) and (259.27,94.94) ..
        (253.35,73.55) -- cycle;

      \draw[color=Gedge, line width=0.75] (295.57,77.68) .. controls (319.35,86.34) and (339.63,73.68) .. (346.29,53.02);
      \draw[color=Gedge, line width=0.75] (295.57,77.68) .. controls (287.02,56.35) and (295.26,39.02) .. (305.4,29.02);
      \draw[color=Gedge, line width=0.75] (346.29,53.02) .. controls (339,34.69) and (322.52,25.36) .. (305.4,29.02);

      \draw[color=Redge, line width=0.75] (394.78,59.68) .. controls (375.13,66.35) and (361.82,66.35) .. (346.29,53.02);
      \draw[color=Redge, line width=0.75] (254.69,104.67) .. controls (281.31,102.67) and (289.55,98) .. (295.57,77.68);
      \draw[color=Redge, line width=0.75] (325.05,109.67) .. controls (306.03,110.67) and (292.09,100.67) .. (295.57,77.68);

      \foreach \x/\y in {
        303.18/28.93, 295.57/77.68, 346.29/53.02,
        394.78/59.68, 254.69/104.67, 325.05/109.67}
        \fill (\x,\y) circle (2.5pt);

      \node at (269.44,49.25) [font=\large] {$S$};
      \node at (390.78,39.68) [font=\large] {$\bar{S}$};
    \end{tikzpicture}
    }
    \caption{$\varphi(S)=1/3$, $\varphi(\bar{S})=1$}
  \end{subfigure}
  \quad
  \begin{subfigure}[b]{0.30\linewidth}
    \centering
    \resizebox{0.7\linewidth}{!}{%
    \begin{tikzpicture}[x=0.75pt,y=0.75pt,yscale=-1,xscale=1]
      \path[use as bounding box] (250,10) rectangle (410,120);

      \draw[color=Sborder, fill=Sfill, line width=0.75]
        (253.35,73.55) .. controls (247.43,52.17) and (269.75,26.53) ..
        (303.2,16.3) .. controls (336.65,6.06) and (368.56,15.09) ..
        (374.48,36.48) .. controls (380.4,57.86) and (358.08,83.5) ..
        (324.63,93.74) .. controls (291.18,103.97) and (259.27,94.94) ..
        (253.35,73.55) -- cycle;

      \draw[color=Gedge, line width=0.75] (295.57,77.68) .. controls (319.35,86.34) and (339.63,73.68) .. (346.29,53.02);
      \draw[color=Gedge, line width=0.75] (295.57,77.68) .. controls (287.02,56.35) and (295.26,39.02) .. (305.4,29.02);
      \draw[color=Gedge, line width=0.75] (346.29,53.02) .. controls (339,34.69) and (322.52,25.36) .. (305.4,29.02);

      \draw[color=Redge, line width=0.75] (394.78,59.68) .. controls (375.13,66.35) and (361.82,66.35) .. (346.29,53.02);
      \draw[color=Redge, line width=0.75] (254.69,104.67) .. controls (281.31,102.67) and (289.55,98) .. (295.57,77.68);
      \draw[color=Redge, line width=0.75] (325.05,109.67) .. controls (306.03,110.67) and (292.09,100.67) .. (295.57,77.68);

      \draw[blue, line width=0.75] (254.69,104.67) to[bend left=10] (325.05,109.67);
      \draw[blue, line width=0.75] (325.05,109.67) to[bend left=10] (394.78,59.68);
      \draw[blue, line width=0.75] (254.69,104.67) to[bend left=25] (394.78,59.68);

      \foreach \x/\y in {
        303.18/28.93, 295.57/77.68, 346.29/53.02,
        394.78/59.68, 254.69/104.67, 325.05/109.67}
        \fill (\x,\y) circle (2.5pt);

      \node at (269.44,49.25) [font=\large] {$S$};
      \node at (390.78,39.68) [font=\large] {$\bar{S}$};
    \end{tikzpicture}
    }
    \caption{$\varphi(S) = \varphi(\bar{S}) = 1/3$}
  \end{subfigure}

  \caption{Computing edge expansions in various graphs: in each picture, the number of red edges equals the quantity $E(S, \bar{S})$ entering the definition of $\varphi(S)$.}
  \label{fig:edgeexp}
\end{figure}
{For $k\geq 2$ we introduce the notation
\begin{equation}\label{eq:phi_k(G)}
    \phi_k(G) := \min_{\substack{S_1,...,S_k \subset V\\ \mbox{\tiny nonempty and disjoint}}} 
\max_{i=1,...,k}\phi(S_i)\,.
\end{equation}

\begin{remark}{\rm 
\begin{enumerate}[label=(\arabic*)]
 \item ({\it A probabilistic interpretation of edge expansions}) Fix a nonempty $S\subseteq V$, and sample an edge adjacent to one of the vertices in $S$ according to the following procedure: (i) sample a vertex $v_0$ in $S$ with a probability proportional to its degree, and (ii) sample an edge $e_0$ adjacent to $v_0$ uniformly at random. Then, $\varphi(S)$ equals the probability that $e_0$ connects $S$ and $\bar{S}$, that is, that $e_0 = \{a,b\}$ for some $a\in S$ and $b\in \bar{S}$. Note that, in the regular case, Step (i) above reduces to selecting one vertex uniformly at random within $S$.

\item For every $k\ge 2$ one has that
$$
\phi_k(G)\le \min_{\substack{S_1,...,S_k\\ \mbox{\tiny partition of $V$ with $S_i$ nonempty}}} \max_{i=1,...,k}\phi(S_i) ,
$$
so that, in particular,
\begin{equation}\label{e:tilde}
\phi_2(G) \le \min_{\emptyset\subset S\subset V} 
\max\big\lbrace\phi(S), \phi(\widebar{S})\big\rbrace = \min_{0< |S|\leq |V|/2} \phi(S) =: \widetilde{\varphi}_2(G)\,. 
\end{equation}
\end{enumerate}
}
\end{remark}

We now state the ``easy direction'' of \emph{Cheeger's inequalities} of order~$k$, and include a sketch of the proof --- not only for completeness, but also because the result plays a crucial role in the arguments developed below. The proof relies on the min-max characterization of the eigenvalues of~$\cL$; see Remark \ref{r:minmax}.

\begin{proposition}[Cheeger's inequalities, easy direction]\label{p:easycheeger} Let $G=(V,\mathcal{E})$ be a finite graph respecting Convention \ref{conv;simple} with $|V|=N$ and let $\mu_k$ be the $k$-th smallest eigenvalue of the normalized Laplacian $\cL$, for $k=2,\ldots,N$.
Then,
\begin{equation}\label{eq:cheegereasydir}
    \mu_k \le 2 \, \phi_k(G)\,.
\end{equation}

\end{proposition}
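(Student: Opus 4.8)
The plan is to derive \eqref{eq:cheegereasydir} from the Courant--Fischer characterization \eqref{e:quotients} of $\mu_k$ by producing an explicit $k$-dimensional subspace of $\R^V$ on which the Rayleigh quotient $R_\cL$ is uniformly small. Concretely, I would fix arbitrary pairwise disjoint nonempty sets $S_1,\dots,S_k\subset V$ (these exist since $k\le N$, e.g. $k$ distinct singletons) and consider $\mathcal M:=\mathrm{span}\{{\bf 1}_{S_1},\dots,{\bf 1}_{S_k}\}\subseteq\R^V$. Because the indicators ${\bf 1}_{S_i}$ have pairwise disjoint, nonempty supports they are linearly independent, so $\dim\mathcal M=k$, and \eqref{e:quotients} yields $\mu_k\le\max_{{\bf z}\in\mathcal M\setminus\{0\}}R_\cL({\bf z})$. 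The whole proof then reduces to the bound $R_\cL({\bf z})\le 2\max_{i}\phi(S_i)$ for every nonzero ${\bf z}=\sum_{i=1}^k c_i{\bf 1}_{S_i}$.

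To prove this last bound I would estimate the numerator and the denominator of $R_\cL({\bf z})=\bigl(\sum_{u\sim v}(z_u-z_v)^2\bigr)\big/\bigl(\sum_{v\in V}d(v)z_v^2\bigr)$ separately, exploiting the disjointness of the supports. The denominator equals $\sum_{i=1}^k c_i^2\,\mathrm{vol}(S_i)$, which is strictly positive since ${\bf z}\neq 0$ and, by Convention \ref{conv;simple}, $G$ has no isolated vertices. For the numerator, an edge $\{u,v\}$ contributes $0$ unless it leaves some block $S_i$; if $u\in S_i$ and $v\notin S_i$ its contribution is at most $2c_i^2$, using $(c_i-c_j)^2\le 2c_i^2+2c_j^2$ when $v\in S_j$ with $j\neq i$, and $(c_i-0)^2\le 2c_i^2$ when $v$ lies in no block. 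Summing over edges and noting that each boundary edge of $S_i$ is counted in $E(S_i,\widebar{S_i})$, one gets $\sum_{u\sim v}(z_u-z_v)^2\le\sum_{i=1}^k 2c_i^2\,E(S_i,\widebar{S_i})=\sum_{i=1}^k 2c_i^2\,\mathrm{vol}(S_i)\,\phi(S_i)$.

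Dividing the two estimates exhibits $R_\cL({\bf z})$ as at most a weighted average of the numbers $2\phi(S_i)$ with nonnegative weights $c_i^2\,\mathrm{vol}(S_i)$, not all zero, hence $R_\cL({\bf z})\le 2\max_i\phi(S_i)$; taking the maximum over ${\bf z}\in\mathcal M\setminus\{0\}$, then the infimum over the choice of $S_1,\dots,S_k$, and invoking \eqref{e:quotients} gives $\mu_k\le 2\phi_k(G)$. There is no serious obstacle in this argument; the only point deserving attention is the constant $2$, which is genuinely needed because distinct blocks $S_i,S_j$ may receive coefficients $c_i,c_j$ of opposite sign, so that $(c_i-c_j)^2$ can only be controlled by $2(c_i^2+c_j^2)$ rather than by $c_i^2+c_j^2$ — note that for a single block one has instead the exact identity $R_\cL({\bf 1}_{S})=\phi(S)$, with no factor $2$.
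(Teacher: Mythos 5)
Your argument is correct and follows essentially the same route as the paper: test the min--max characterization \eqref{e:quotients} on the $k$-dimensional span of the indicators ${\bf 1}_{S_1},\dots,{\bf 1}_{S_k}$ and bound the Rayleigh quotient by $2\max_i\phi(S_i)$ via the weighted-average inequality, then optimize over the choice of disjoint sets. The only difference is that the paper cites an extension of Trevisan's Lemma~7.1 for the key estimate $R_\cL\bigl(\sum_i c_i{\bf 1}_{S_i}\bigr)\le 2\,\frac{\sum_i c_i^2\,\mathrm{vol}(S_i)\,\phi(S_i)}{\sum_i c_i^2\,\mathrm{vol}(S_i)}$, whereas you prove it directly with the elementary bound $(c_i-c_j)^2\le 2c_i^2+2c_j^2$ — a correct filling-in of the cited step.
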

\begin{proof}[Sketch of the proof] Fix nonempty disjoint subsets $S_1,...,S_k \subset V$. A standard extension to irregular graphs of \cite[Lemma 7.1]{trevisanLN} yields that, for all vectors $(\alpha_1,...,\alpha_k) \in \R^k$ not identically zero, 
\begin{equation}
R_L\left(\sum_{i=1}^k \alpha_i{\bf 1}_{S_i}\right) \leq 2\, \frac{\sum_{i=1}^k\alpha_i^2 \, \text{vol}(S_i) \,\phi(S_i)}{\sum_{i=1}^k \alpha_i^2 \,\text{vol}(S_i)} \leq 2 \max_{i=1,...,k}\phi(S_i) .
\end{equation}
The conclusion follows from \eqref{e:quotients}, first by computing $\max_{{\bf z}\in \mathcal{M}\backslash \{0\}} R_\cL({\bf z})$ when $\mathcal{M}$ is the vector space generated by ${\bf 1}_{S_1},...,{\bf 1}_{S_k}$, and then by taking the minimum over all nonempty disjoint subsets $S_1,...,S_k \subset V$. 
\end{proof}

The following statement (of which we omit the proof) is taken from \cite[Theorem 3.8]{LGT14}, and is a version of the ``hard Cheeger inequality'' for graph partitions; see also \cite{Vempala12}.

\begin{theorem}[Cheeger's inequality, hard direction]\label{t:hardcheeger}
Let $G=(V,\mathcal{E})$ be a finite graph, respecting Convention \ref{conv;simple} and such that $|V|=N$, and let $\mu_k$ be the $k$-th smallest eigenvalue of its normalized Laplacian $\cL$, for $k=2,\ldots,N$. Then, there exists an absolute constant $C>0$ such that, for every $k=2,...,N$, there exists a partition $S_1,...,S_k$ of $V$ with nonempty subsets verifying
\begin{equation*}
\max_{i=1,\ldots, k} \phi(S_i)\leq Ck^4 \, \sqrt{\mu_k}\,.
\end{equation*}    
{In particular, one has that $\phi_k(G) \leq Ck^4 \, \sqrt{\mu_k}$.}
\end{theorem}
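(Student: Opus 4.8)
The statement is the ``hard direction'' of the higher-order Cheeger inequality, and the plan is to follow the route of Lee, Oveis Gharan and Trevisan~\cite{LGT14} (see also~\cite{Vempala12}): a spectral embedding, followed by a geometric extraction of many sparse cuts, followed by rounding. First I would set up the \emph{spectral embedding}. Take $\cL$-eigenvectors $f_1,\ldots,f_k$ associated with the $k$ smallest eigenvalues $0=\mu_1\le\cdots\le\mu_k$, orthonormal in $\R^V$, put $g_i:=D^{-1/2}f_i$, and consider $F : V\to\R^k$ given by $F(v):=(g_1(v),\ldots,g_k(v))$. Expanding the Rayleigh quotients $R_\cL$ of Subsection~\ref{ss:generalgraph} coordinatewise and summing over $i=1,\ldots,k$ gives $\sum_{u\sim v}\|F(u)-F(v)\|^2=\sum_{i=1}^{k}\mu_i\le k\mu_k$ and $\sum_{v\in V}d(v)\|F(v)\|^2=k$, so $F$ has normalized Dirichlet energy at most $\mu_k$; moreover $F$ spans $\R^k$, since the $g_i$ are $D$-orthonormal and hence linearly independent.

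The core step --- the one I expect to be the main obstacle --- is to convert this low-energy embedding into $k$ \emph{disjointly supported} functions $\psi_1,\ldots,\psi_k : V\to\R_{\ge 0}$, none identically zero, with $R_\cL(\psi_i)\le c_1 k^{6}\mu_k$ for an absolute constant $c_1$ (the precise power of $k$ is immaterial; one picks any value compatible with the $k^4$ that will appear at the end). I would carry this out as in~\cite{LGT14}: replace $F$ by $\Lambda F$ for a suitable invertible $\Lambda$, so that $F$ is in (approximately) isotropic position for the mass measure $v\mapsto d(v)$; radially project the nonzero vectors $F(v)$ onto the unit sphere $S^{k-1}$, obtaining a probability measure on $S^{k-1}$ that cannot be too concentrated inside any small spherical cap; then use a packing argument on $S^{k-1}$ together with radially decaying bump functions centered at $k$ well-separated directions --- handling separately the vertices $v$ with $\|F(v)\|$ small or zero --- to produce the $\psi_i$. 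Heuristically, the $k$ linearly independent directions of $\R^k$ are forced to accommodate $k$ mutually separated clusters of mass, each carrying an $\Omega(k^{-6})$ fraction of the Dirichlet energy; a loss polynomial in $k$ is genuinely unavoidable, as the noisy hypercube shows.

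The remaining two steps are routine. I would apply the standard Cheeger sweep to each nonnegative $\psi_i$: among the level sets $\{v:\psi_i(v)>t\}$, $t\ge 0$, some nonempty set $T_i$ satisfies $\phi(T_i)\le\sqrt{2\,R_\cL(\psi_i)}\le\sqrt{2c_1}\,k^{3}\sqrt{\mu_k}=:\delta$, and since $T_i\subseteq\mathrm{supp}(\psi_i)$ the sets $T_1,\ldots,T_k$ are nonempty and pairwise disjoint; this already yields $\phi_k(G)\le\delta$. Finally, to obtain a genuine partition, choose $j$ with $\mathrm{vol}(T_j)=\max_i\mathrm{vol}(T_i)$, put $S_j:=T_j\cup\big(V\setminus(T_1\cup\cdots\cup T_k)\big)$ and $S_i:=T_i$ for $i\ne j$. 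Then $S_1,\ldots,S_k$ partition $V$ into nonempty sets, $\phi(S_i)=\phi(T_i)\le\delta$ for $i\ne j$, while $\widebar{S_j}=\bigcup_{i\ne j}T_i$ and $S_j\subseteq\widebar{T_i}$ (for $i\ne j$) give
\begin{equation*}
\phi(S_j)=\frac{E(S_j,\widebar{S_j})}{\mathrm{vol}(S_j)}\le\frac{\sum_{i\ne j}E(T_i,\widebar{T_i})}{\mathrm{vol}(S_j)}\le\frac{\delta\sum_{i\ne j}\mathrm{vol}(T_i)}{\mathrm{vol}(S_j)}\le(k-1)\,\delta\,,
\end{equation*}
where the last bound uses $\mathrm{vol}(T_i)\le\mathrm{vol}(T_j)\le\mathrm{vol}(S_j)$ for every $i$. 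Hence $\max_i\phi(S_i)\le k\delta=\sqrt{2c_1}\,k^{4}\sqrt{\mu_k}$, which is the claimed bound with $C:=\sqrt{2c_1}$ (and in particular $\phi_k(G)\le Ck^4\sqrt{\mu_k}$). In short, Step~1 and the two rounding steps are straightforward; the entire difficulty --- and where I would lean on~\cite{LGT14} --- is the geometric extraction of $k$ separated sparse regions from the spectral embedding while keeping the loss polynomial in $k$.
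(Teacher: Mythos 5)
The paper does not prove this statement at all: it is quoted verbatim from \cite[Theorem 3.8]{LGT14} with the proof explicitly omitted, and your sketch is precisely the strategy of that reference (spectral embedding, isotropic localization into $k$ disjointly supported low-energy functions, Cheeger sweep, and merging the leftover vertices into the largest block). The steps you write out in full --- the energy bookkeeping for $F$, the sweep bound $\phi(T_i)\le\sqrt{2R_\cL(\psi_i)}$, and the final partition argument giving the extra factor $k$ --- are correct, and the one step you defer (the geometric extraction of the $\psi_i$) is exactly what the paper itself defers wholesale to \cite{LGT14}.
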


We also remark that, in the case $k=2$, one has the stronger estimate
\begin{equation}\label{e:truecheeger}
\widetilde{\phi}_2(G) \leq \sqrt{2\mu_2},
\end{equation}
where $\widetilde{\phi}_2(G)$ is defined according to \eqref{e:tilde} --- see e.g. \cite[Theorem 2.4]{hlw}.

\medskip

We conclude by stating an elementary bound, that will be exploited in the proof of Theorem \ref{th:mainSPECTRUMgraph}.

\begin{proposition} \label{prop:simple} Let $G=(V,\mathcal{E})$ be a finite graph as in Convention \ref{conv;simple}, with $|V|=N$. Fix an integer $1\leq m\leq N$. Consider a collection $B_1,...,B_m$ of nonempty disjoint subsets of $V$, and assume that
\begin{equation}\label{e:ordering}
\phi(B_1)\le \phi(B_2)\le \cdots \le \phi(B_m)\,. 
\end{equation}
Then, for all $k=1,...,m$,
\begin{equation}\label{e:simple}
 \mu_k \leq 2 \, \frac{\sum_{i=k}^m \mathrm{vol}(B_i) \,\phi(B_i)}{\sum_{i=k}^m \mathrm{vol}(B_i)} \,  . 
\end{equation}
\end{proposition}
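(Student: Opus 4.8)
The plan is to derive \eqref{e:simple} directly from the easy direction of Cheeger's inequality (Proposition~\ref{p:easycheeger}) together with the ordering hypothesis \eqref{e:ordering}, so that essentially no new computation is needed.

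First I would dispose of the trivial case $k=1$: here $\mu_1=0$, while the right-hand side of \eqref{e:simple} is nonnegative, because $\phi(B_i)\ge 0$ and $\mathrm{vol}(B_i)>0$ for every $i$ (the latter since $G$ has no isolated vertices and each $B_i$ is nonempty). For $2\le k\le m$ I would observe that $B_1,\dots,B_k$ is an admissible family of $k$ nonempty pairwise disjoint subsets in the definition~\eqref{eq:phi_k(G)} of $\phi_k(G)$, whence
\[
\phi_k(G)\;\le\;\max_{i=1,\dots,k}\phi(B_i)\;=\;\phi(B_k),
\]
the last equality by \eqref{e:ordering}. Proposition~\ref{p:easycheeger} then gives $\mu_k\le 2\phi_k(G)\le 2\phi(B_k)$. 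It remains to bound $\phi(B_k)$ by the weighted average on the right-hand side of \eqref{e:simple}: since $\phi(B_k)\le\phi(B_i)$ for all $i\ge k$ by \eqref{e:ordering}, and all the weights $\mathrm{vol}(B_i)$ are strictly positive, the average of $\phi(B_k),\dots,\phi(B_m)$ against these weights is at least $\phi(B_k)$, i.e.
\[
\phi(B_k)\;\le\;\frac{\sum_{i=k}^m\mathrm{vol}(B_i)\,\phi(B_i)}{\sum_{i=k}^m\mathrm{vol}(B_i)}\,,
\]
and combining this with $\mu_k\le 2\phi(B_k)$ yields \eqref{e:simple}.

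I do not expect any genuine obstacle here; the only point worth flagging is that \eqref{e:simple} is in fact a (deliberately convenient) weakening of the sharper estimate $\mu_k\le 2\phi(B_k)$, its right-hand side being a convex combination of the numbers $\phi(B_k),\dots,\phi(B_m)$, all of which are $\ge\phi(B_k)$. If one preferred a proof that produces the stated form in one stroke rather than passing through $\phi(B_k)$, one could instead apply the min--max characterisation~\eqref{e:quotients} to the $k$-dimensional subspace spanned by ${\bf 1}_{B_1},\dots,{\bf 1}_{B_{k-1}},{\bf 1}_{B_k\cup\cdots\cup B_m}$, combine the Rayleigh-quotient bound established in the proof of Proposition~\ref{p:easycheeger} with the elementary sub-additivity $\mathrm{vol}\!\left(\bigcup_{i\ge k}B_i\right)\phi\!\left(\bigcup_{i\ge k}B_i\right)\le\sum_{i\ge k}\mathrm{vol}(B_i)\,\phi(B_i)$ (which just says an edge leaving $\bigcup_{i\ge k}B_i$ leaves each $B_i$ containing one of its endpoints), and use \eqref{e:ordering} once more to dominate the remaining coefficients $\phi(B_1),\dots,\phi(B_{k-1})$ by that same average. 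Either route is routine; I would present the first, since it is shortest and makes the logical dependence on Proposition~\ref{p:easycheeger} transparent.
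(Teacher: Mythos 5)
Your proof is correct and follows essentially the same route as the paper: both arguments chain $\mu_k \le 2\phi_k(G) \le 2\phi(B_k)$ (using Proposition~\ref{p:easycheeger} and the ordering \eqref{e:ordering}, with $B_1,\dots,B_k$ as the competitor family in \eqref{eq:phi_k(G)}) with the observation that $\phi(B_k)$ is dominated by the volume-weighted average of $\phi(B_k),\dots,\phi(B_m)$. Your explicit handling of the trivial case $k=1$ is a minor extra precision (the paper states the bound for all $k$ while $\phi_k(G)$ is only defined for $k\ge 2$), but the substance is identical.
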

\begin{proof}
Fix $k=1,...,m$. Then \eqref{e:ordering} implies that
$$
\frac{\sum_{i=k}^m \mathrm{vol}(B_i)\, \phi(B_i)}{\sum_{i=k}^m \mathrm{vol}(B_i)} \ge \phi(B_k).
$$
The conclusion follows by observing that, again because of \eqref{e:ordering},
$$
\phi(B_k) =
\min_{1\le j_1<\cdots < j_k\le m}\,\, \max_{\ell = 1,...,k} \phi(B_{j_\ell})
\ge \phi_k(G)\ge \frac{\mu_k}{2},
$$
where we have used Proposition \ref{p:easycheeger}.
\end{proof}
}
\subsection{Laplace spectra and irreducibility} \label{subsec:graphirreduc}
{This subsection focuses on the proof of Theorem \ref{th:mainSPECTRUMgraph}, which requires us to consider a sequence of quadratic sums $\{Z_n\}$ as in \eqref{eq:bilformsubsec}. We stress that such a framework corresponds to the case $d=2$ and $q_n\equiv 1$ in our general setting \eqref{eq:defpolchaos}, that the sequence $\{E_n\}$ is assumed to satisfy conditions \eqref{item:VE1}---\eqref{item:VE3} from the beginning of Section \ref{ss:prelim}, and that the indicator $\ind_{E_n}$ is identified with the adjacency matrix of an appropriate graph $G_n = (V_n, \mathcal{E}_n)$ such that $|E_n| = 2|\mathcal{E}_n|$. For $n\ge 1$, the normalized Laplace spectrum of $G_n$ is written $0 = \mu_1^{(n)}\leq \mu_2^{(n)}\le \cdots \le \mu_{N_n}^{(n)}\leq 2$.
\smallskip

The following elementary result (whose proof is left to the reader) uses the fact that, in the framework of the present section and using the notation $\eqref{eq:sigmaBdef}$  and $\eqref{eq:E(S,T)}$, one has that
$$
\sigma_n^2(B) = 2|E_n \cap (B\times B)| = 4E(B,B), \quad B\subseteq V.
$$
\begin{lemma}[Reducibility and graphs]\label{def:irreducibility22} Let the assumptions and conventions of the present section hold. Then, the sequence $\{Z_n\}$ is reducible in the sense of Definition \ref{def:irreducibility} (see also Remark \ref{r:reducible}-{\rm \ref{redCLTconda},\ref{redCLTcondb},\ref{redCLTcondc}}) if and only if there exist partitions $\{B_1,\ldots,B_{m_n}\} = \{B^{(n)}_1,\ldots,B^{(n)}_{m_n}\}$ of the vertex sets $V_n$, such that, as $n \to \infty$,
    \begin{enumerate}[label={\rm (\roman*')}]
    \item \label{item:irr'1} $m_n\to \infty$;
    \item \label{item:irr'2} $\sum_{i=1}^{m_n} E(B_i,B_i) \sim |\mathcal{E}_n|$;
    \item \label{item:irr'3} $\max_{i=1,...,m_n} E(B_i,B_i) = o\big(|\mathcal{E}_n|\big)$.  
\end{enumerate}
\end{lemma}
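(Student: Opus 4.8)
The plan is to reduce the whole statement to the two bookkeeping identities valid in this section, namely $d!\,\|q_n\|^2 = 2|E_n| = 4|\mathcal{E}_n|$ (since $d=2$, $q_n\equiv 1$) and, by the relation recorded just above the statement, $\sigma_n^2(B) = 2\,|E_n\cap(B\times B)| = 4\,E(B,B)$ for every $B\subseteq V_n$. Dividing numerator and denominator by $4$, condition \ref{redCLTcond2} of Definition~\ref{def:irreducibility} becomes $\sum_i E(B_i,B_i)\sim|\mathcal{E}_n|$, i.e.\ \ref{item:irr'2}, and \ref{redCLTcond3} becomes $\max_i E(B_i,B_i)=o(|\mathcal{E}_n|)$, i.e.\ \ref{item:irr'3}, while \ref{redCLTcond1} and \ref{item:irr'1} are literally the same. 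Hence the only genuine content of the lemma is the passage between a family of disjoint subsets of $V_n$ (as in Definition~\ref{def:irreducibility}) and a true partition of $V_n$ into nonempty blocks (as in the lemma).

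For the direct implication (``$\{Z_n\}$ reducible $\Rightarrow$ a partition as in \ref{item:irr'1}--\ref{item:irr'3} exists'') I would first note that one may take the boxes $B_1,\dots,B_{m_n}$ provided by reducibility to be nonempty: if $\ell_n$ denotes the number of nonempty ones, then, empty boxes contributing nothing, $\tfrac{1}{d!\|q_n\|^2}\sum_i\sigma_n^2(B_i)\le \ell_n\,\tfrac{1}{d!\|q_n\|^2}\max_i\sigma_n^2(B_i)$; since by \ref{redCLTcond2} the left-hand side tends to $1$ and by \ref{redCLTcond3} the last maximum tends to $0$, this forces $\ell_n\to\infty$, so discarding the empty boxes preserves \ref{redCLTcond1}--\ref{redCLTcond3}. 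Given then nonempty disjoint $B_1,\dots,B_{m_n}\subseteq V_n$ with \ref{redCLTcond1}--\ref{redCLTcond3}, set $R_n := V_n\setminus\bigcup_{i=1}^{m_n}B_i$ and enlarge the family by adjoining the singletons $\{v\}$, $v\in R_n$. The enlarged family is a partition of $V_n$ into nonempty blocks; its cardinality $m_n+|R_n|\ge m_n\to\infty$ gives \ref{item:irr'1}; and since by Convention~\ref{conv;simple} the graph is loop-free, $E(\{v\},\{v\})=0$, so neither $\sum_i E(B_i,B_i)$ nor $\max_i E(B_i,B_i)$ changes, whence \ref{item:irr'2} and \ref{item:irr'3} follow from \ref{redCLTcond2} and \ref{redCLTcond3} through the identities above.

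The converse is immediate: a partition of $V_n$ into nonempty blocks is in particular a family of disjoint nonempty subsets, and the same two identities turn \ref{item:irr'1}--\ref{item:irr'3} back into \ref{redCLTcond1}--\ref{redCLTcond3}, so $\{Z_n\}$ is reducible in the sense of Definition~\ref{def:irreducibility}. There is no real obstacle in any of this; the only two points worth a line of care are the reduction to nonempty boxes above, and the remark that one must split the complement $R_n$ into singletons rather than adjoin it as a single new block --- the latter could spoil \ref{item:irr'3} whenever $E(R_n,R_n)$ fails to be $o(|\mathcal{E}_n|)$, whereas singletons contribute zero and are therefore harmless.
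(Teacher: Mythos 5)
Your proof is correct and follows exactly the route the paper intends: the paper leaves this lemma to the reader, supplying only the key identity $\sigma_n^2(B)=2|E_n\cap(B\times B)|=4E(B,B)$ (together with $d!\|q_n\|^2=2|E_n|=4|\mathcal{E}_n|$), which is precisely what you use to translate \ref{redCLTcond1}--\ref{redCLTcond3} into \ref{item:irr'1}--\ref{item:irr'3}. Your careful handling of the only non-bookkeeping point --- passing from disjoint boxes to a genuine partition by discarding empty boxes (with the $\ell_n\to\infty$ argument) and adjoining the leftover vertices as singletons, which contribute nothing since the graph is loop-free --- is the natural and correct completion of the argument.
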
 

As indicated in the statement, the definition of the elements partition $\{B_1,\ldots,B_{m_n}\}$ depends in principle on $n$; this dependence has been omitted in \ref{item:irr'1}, \ref{item:irr'2}, and \ref{item:irr'3} to avoid overloading the notation.

We can now prove one of the main results of the paper.

\begin{proof}[Proof of Theorem \ref{th:mainSPECTRUMgraph}] We reason by contradiction by assuming that \eqref{e:mucca} holds for some $k\geq 2$ and that there exists partitions $\{B_1,\ldots, B_{m_n}\}$, $n\geq 1$, such that the three properties \ref{item:irr'1}---\ref{item:irr'3} listed in Lemma \ref{def:irreducibility22} hold true. Without loss of generality we can always assume that the partition $B_1,\ldots, B_{m_n}$ satisfies the ordering relation \eqref{e:ordering} for all $n\ge 1$, so that \eqref{e:simple} yields
\begin{equation} \label{eq:bound-muk}
    \frac{\mu_k^{(n)}}{2}\leq \frac{\sum_{i=k}^{m_n} q^{(n)}_i  \phi(B_i)}{\sum_{i=k}^{m_n} q^{(n)}_i}\,,
\qquad \text{where } \
q^{(n)}_i := \frac{\mathrm{vol}\big(B_i\big)}{\mathrm{vol}(V_n)}
= \frac{\mathrm{vol}\big(B_i\big)}{2 \, |\cE_n|}\,.
\end{equation}
To achieve the desired contradiction and conclude the proof, it is now sufficient to show that the right-hand side of the previous inequality converges to zero. Note that
\begin{equation}\label{eq:parti}
\begin{split}
\frac{1}{2} 
\sum_{i=1}^{m_n} E(B_i, \widebar{B_i}) &
= \frac{1}{2} \sum_{i=1}^{m_n} 
 \mathrm{vol} \big( B_i\big) - \sum_{i=1}^{m_n} E(B_i, B_i) \\
&= \frac{1}{2} \mathrm{vol}(V_n) - \sum_{i=1}^{m_n} E(B_i, B_i) \\
&=|\cE_n| - \sum_{i=1}^{m_n}  E(B_i, B_i) 
= o\big(|\cE_n|\big)
\end{split}
\end{equation}
by property \ref{item:irr'2}. As a consequence,
\begin{equation} \label{eq:BBbar}
    \sum_{i=1}^{m_n} \frac{E(B_i, \widebar{B_i})}{2\,|\cE_n|}
= \sum_{i=1}^{m_n} q^{(n)}_i \phi(B_i)
= o(1) \,.
\end{equation}
This implies that the numerator on the right-hand side of the inequality
\eqref{eq:bound-muk} vanishes, and it remains to show that the
denominator is bounded away from zero. We conclude by observing that such a denominator converges indeed to to~$1$, which is equivalent to the relation $\sum_{i=1}^{k-1} q^{(n)}_i \to 0$
since $\sum_{i=1}^{m_n} q^{(n)}_i = 1$.
To see this, we simply write
\begin{equation} \label{eq:pbound-}
    q^{(n)}_i =
\frac{\mathrm{vol}(B_i)}{\mathrm{vol}(V_n)} =  \frac{\mathrm{vol}(B_i)}{2|\cE_n|} = \frac{ E(B_i, B_i)}{|\cE_n|} +\frac{E(B_i, \widebar{B_i})}{2\,|\cE_n|} = o(1) \,,
\end{equation}
uniformly for~$i \in \{1,\ldots, m_n\}$ (because of \ref{item:irr'3}).
\end{proof}
}

\subsection{Full irreducibility}\label{ss:full}
We conclude this section with a (slight) generalization of Theorem~\ref{th:mainSPECTRUMgraph}. Let $G_n = (V_n, \cE_n)$, $n\geq 1$, be a sequence of graphs, as those studied in the previous section, and consider the associated sequence $\{Z_n\}$. We say that the sequence $\{G_n\}$ is \emph{partially reducible} if the following conditions hold.
\begin{itemize}
\item There exist $\rho \in (0,1]$ and a subset $V'_n \subseteq V_n$, $n\ge 1$, such that (recall \eqref{e:complement})
\begin{equation}\label{eq:V'rho}
    \text{vol}(V'_n) = (\rho + o(1)) \text{vol}(V_n) \,,
    \qquad
    E\big(V'_n, \widebar{V'_n}\big) = o(|\cE_n|) \,,
\end{equation}
that is, $V'_n$ contains asymptotically a
fraction $\rho > 0$ of vertices and it is loosely connected to its complement.

\item There exist partitions $B_1,\ldots, B_{m_n}$ of $V'_n$, $n\geq 1$, where $B_i=B_i^{(n)}$ for $i=1,\ldots,m_n$, such that
\begin{enumerate}[label={(\roman*'')}]
\item \label{item:red1''} $m_n\to \infty$;
\item \label{item:red2''} $\sum_{i=1}^{m_n} E(B_i,B_i) = E(V'_n, V'_n) + o(|\cE_n|)$;
\item \label{item:red3''} $\max_{i=1,\ldots,m_n}E(B_i,B_i) = o(|\cE_n|)$.  
\end{enumerate}
\end{itemize}
We say that a sequence of graphs $\{G_n\}$ is \emph{fully irreducible} if it is not partially reducible.

\begin{remark}{\rm 
    The second condition in \eqref{eq:V'rho} is equivalent to $\phi(V'_n) = o(1)$, thanks to the first condition which yields $|\cE_n| = \frac{1}{2} d_n |V_n|
    \sim \frac{1}{2} \frac{1}{\rho} d_n |V'_n|$. Also note that the second condition in \eqref{eq:V'rho} follows by the first one in the extreme case $\rho=1$, simply because $E\big(V'_n, \widebar{V'_n}\big) \le d_n |\widebar{V'_n}| = d_n(|V_n| - |V'_n|) = d_n|V_n| (1-\rho+o(1))  = o(d_n|V_n|) = o(|\cE_n|)$.}
\end{remark}

With little effort, we can deduce a slightly improved version of Theorem~\ref{th:mainSPECTRUMgraph}, whose proof is presented in Appendix \ref{appendix}.

\begin{theorem}\label{th:k-irr+}
If condition \eqref{e:mucca} holds,
then $\{G_n\}$ is fully irreducible.
\end{theorem}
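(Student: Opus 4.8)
The plan is to mimic the proof of Theorem~\ref{th:mainSPECTRUMgraph}, arguing by contradiction: assume \eqref{e:mucca} holds for some $k\ge 2$ and yet $\{G_n\}$ is partially reducible, witnessed by a fraction $\rho \in (0,1]$, subsets $V'_n \subseteq V_n$ satisfying \eqref{eq:V'rho}, and partitions $B_1,\ldots,B_{m_n}$ of $V'_n$ satisfying \ref{item:red1''}--\ref{item:red3''}. The key observation is that a partition of $V'_n$ together with the single ``remainder'' block $B_0 := \widebar{V'_n}$ forms a partition of the whole of $V_n$, so we may run the Cheeger machinery of Proposition~\ref{prop:simple} on the enlarged family $B_0, B_1, \ldots, B_{m_n}$. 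Up to reindexing so that $\phi(B_0), \phi(B_1),\ldots,\phi(B_{m_n})$ are in nondecreasing order, Proposition~\ref{prop:simple} gives, for each $j$,
\begin{equation*}
\frac{\mu_k^{(n)}}{2} \le \frac{\sum_{i \ge j} \mathrm{vol}(B_i)\,\phi(B_i)}{\sum_{i \ge j}\mathrm{vol}(B_i)}\,,
\end{equation*}
provided at least $k$ of the blocks survive the truncation at level $j$.

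\textbf{Key steps.} First I would record the edge-expansion estimates. For the remainder block, $\phi(B_0) = E(V'_n,\widebar{V'_n})/\mathrm{vol}(\widebar{V'_n})$; using the first relation in \eqref{eq:V'rho} we have $\mathrm{vol}(\widebar{V'_n}) = (1-\rho+o(1))\mathrm{vol}(V_n)$, which is $\asymp |\cE_n|$ when $\rho < 1$, so the second relation in \eqref{eq:V'rho} gives $\phi(B_0) = o(1)$; when $\rho = 1$ one handles $B_0$ separately (it may be empty or negligible, and in any case $E(V'_n,\widebar{V'_n}) = o(|\cE_n|)$ is automatic as noted in the Remark following \eqref{eq:V'rho}, so one can simply work with the partition $B_1,\ldots,B_{m_n}$ of $V'_n$ directly, essentially recovering Theorem~\ref{th:mainSPECTRUMgraph}). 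For the blocks inside $V'_n$, the computation of \eqref{eq:parti} adapts verbatim:
\begin{equation*}
\tfrac12 \sum_{i=1}^{m_n} E(B_i,\widebar{B_i}) = \tfrac12\,\mathrm{vol}(V'_n) - \sum_{i=1}^{m_n} E(B_i,B_i) = E(V'_n,V'_n) + E(V'_n,\widebar{V'_n}) - \sum_{i=1}^{m_n}E(B_i,B_i) = o(|\cE_n|)\,,
\end{equation*}
using \ref{item:red2''} and the second part of \eqref{eq:V'rho}. Hence $\sum_{i=0}^{m_n} q_i^{(n)}\phi(B_i) = o(1)$ where $q_i^{(n)} := \mathrm{vol}(B_i)/\mathrm{vol}(V_n)$, since this sum equals $\tfrac{1}{2|\cE_n|}\sum_{i=0}^{m_n} E(B_i,\widebar{B_i}) + o(1)$. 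Next, exactly as in \eqref{eq:pbound-}, each $q_i^{(n)}$ for $i \ge 1$ is $o(1)$ uniformly (from \ref{item:red3''} and the displayed bound on $E(B_i,\widebar{B_i})$), and $q_0^{(n)} = \mathrm{vol}(\widebar{V'_n})/\mathrm{vol}(V_n) \to 1-\rho < 1$. So whatever fixed index set of size $k-1$ is removed by the truncation at level $j = k$, the surviving denominator $\sum_{i\ge k} q_i^{(n)}$ converges to $1 - \sum_{i<k} q_i^{(n)} \ge 1 - (1-\rho) - o(1) = \rho + o(1) > 0$, while the numerator $\sum_{i\ge k} q_i^{(n)}\phi(B_i) = o(1)$. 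Plugging into the Proposition~\ref{prop:simple} bound forces $\mu_k^{(n)} \to 0$, contradicting \eqref{e:mucca}.

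\textbf{Main obstacle.} The only delicate point is bookkeeping around where the block $B_0 = \widebar{V'_n}$ lands after reordering by edge expansion, and ensuring that at least $k$ blocks survive truncation at level $k$ — this is fine because $m_n \to \infty$, so for large $n$ the family $B_0,B_1,\ldots,B_{m_n}$ has more than $k$ members. One must also be slightly careful that $\phi(B_0)$ could be larger than some $\phi(B_i)$'s, but since $\phi(B_0) = o(1)$ and we only need the whole truncated \emph{average} $\sum_{i\ge k}q_i\phi(B_i)/\sum_{i \ge k}q_i$ to vanish, and every individual $\phi$-value appearing is $o(1)$ in a sense controlled by the $o(|\cE_n|)$ total edge-boundary estimate, the argument goes through uniformly; there is no genuine new analytic difficulty beyond Theorem~\ref{th:mainSPECTRUMgraph}. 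The case $\rho = 1$ should be dispatched first as the easy special case (it is genuinely just Theorem~\ref{th:mainSPECTRUMgraph} applied after discarding the negligibly-connected complement), and then the bulk of the write-up handles $\rho \in (0,1)$.
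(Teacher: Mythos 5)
Your proposal is correct and follows essentially the same route as the paper: argue by contradiction, feed the blocks into Proposition~\ref{prop:simple}, show the truncated numerator $\sum_{i\ge k} q_i^{(n)}\phi(B_i)=o(1)$ via the edge-boundary accounting coming from \eqref{eq:V'rho} and \ref{item:red2''}--\ref{item:red3''}, and note the denominator stays $\ge \rho+o(1)>0$. The only (cosmetic) differences are that you adjoin $B_0=\widebar{V'_n}$ to the family used in the Cheeger bound, whereas the paper applies Proposition~\ref{prop:simple} to $B_1,\ldots,B_{m_n}$ alone (the proposition only needs disjoint nonempty sets, so the total mass $\rho$ suffices) and introduces $\widebar{V'_n}$ only inside the counting identity; also, your middle equality should carry a factor $\tfrac12$ on $E(V'_n,\widebar{V'_n})$, a harmless slip since that term is $o(|\cE_n|)$.
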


\section{Irreducibility and hypergraph spectra}\label{sec:Hypergraphs}
{In this section we derive a generalization of Theorem~\ref{th:mainSPECTRUMgraph}, extending our results to the setting of homogeneous sums of arbitrary order $d \geq 2$, possibly involving non-trivial coefficients $q_n$. To this end, we broaden the spectral framework developed in Section~\ref{sec:GRAPHS} by moving to the setting of weighted hypergraphs, partially following the approach of \cite{SSPHypergraphs, B21}. The main result of this section, Theorem~\ref{th:mainresulthyper}, strictly contains Theorem~\ref{th:mainSPECTRUMgraph} as a particular case. Nevertheless, we chose to treat the case $d = 2$ separately in order to highlight its specific features and maintain clarity.}

\subsection{Hypergraphs and extended Cheeger's inequalities}
{A \emph{hypergraph} $\cG$ is a pair $\cG = (V, \cE)$ consisting of a finite vertex set $V$ and a collection $\cE$ of non-empty subsets of $V$, referred to as \emph{hyperedges} (or simply \emph{edges} when the context is clear). To accommodate the most general setting, we consider \emph{weighted} hypergraphs, where a non-negative weight function $w = \{ w(e) : e \in \cE \}$ is assigned to the edges of $\cG$. We exclude the presence of loops, and therefore assume that $|e| \geq 2$ for all $e \in E$. For $d\geq 2$, we say that $\cG$ is $d$-{\em uniform} if $|e| = d$ for every $e\in \cE$.

It is evident that the notion of a hypergraph generalizes that of a standard graph, which corresponds to the special case of 2-uniform hypergraphs, that is, hypergraphs where each edge connects exactly two vertices. As in the case of graphs, two vertices $v, w \in V$ are said to be \emph{adjacent}, written $v \sim w$, if there exists an edge $e \in \cE$ such that $\{v, w\} \subseteq e$.

Consider a weighted hypergraph $\cG = (V, \cE, w)$ with $V = \{v_1, \ldots, v_N\}$. Our goal is to define an extended version of the adjacency matrix associated with a graph, capable of capturing the connectivity structure of $\cG$. To this end, we follow the approach of \cite{B21}, which deals with the unweighted case where $w \equiv {1}$. Writing $\cE_{ij}\coloneq \{ e \in \cE : v_i,v_j \in e \}$, $i,j=1,...,N$, the \emph{adjacency matrix} $A_\cG$ associated with $\cG$ is the $N \times N$ matrix that is zero on the diagonal, and otherwise defined as
\begin{equation}\label{eq:adjacencymatrix}
    (A_\cG)_{ij} \coloneq \sum_{e \in \cE_{ij}} \frac{w(e)}{|e|-1}\,, \quad i \neq j\in [N]\,.
\end{equation}
{\it Heuristic interpretation of} \eqref{eq:adjacencymatrix}: Fix $v_i \in V$ and assign mass $1$ to every edge containing it. Assuming $w \equiv {1}$, the adjacency matrix $A_\cG$ quantifies the connectivity of $v_i$ by distributing the unit mass of each edge $e \ni v_i$ uniformly among the other vertices $v_j \in e \setminus \{v_i\}$. 

\smallskip 

As a consequence of the choice of normalization in \eqref{eq:adjacencymatrix}, if one sums the elements of the $i$th row (or column) of $A_\cG$, one obtains exactly the total number of edges that include $v_i$ as an element, a quantity that corresponds to the {\em degree} (or {\em weighted degree}, if $w\ne {1}$) $d(v_i)$ of $v_i$. Such a quantity is defined as
\begin{equation}\label{e:hyperdegree}
    d(v_i)= \sum_{j=1}^N  (A_\cG)_{ij} = \sum_{\substack{j=1\\ j\ne i}}^N \sum_{e \in \cE_{ij}} \frac{w(e)}{|e|-1}
     = \sum_{e \in \cE \colon e \ni v_i} w(e) \,,\qquad v_i \in V\,.
\end{equation}
See Figure \ref{fig:hyper} for some examples. Notice that, when $\cG = G$ is a graph, the expression \eqref{eq:adjacencymatrix} coincides with the standard definition of the adjacency matrix associated with $G$.

\begin{figure}[t]
\centering

\begin{minipage}[t]{0.31\textwidth}
\centering
\begin{tikzpicture}[scale=0.7, yscale=1.2]

    \foreach \i/\x in {1/0, 2/1.5, 3/3, 4/4.5} {
        \filldraw[black] (\x,0) circle (2pt);
        \node at (\x,-0.4) {\small \i};
    }

    \draw[red, thick] (0,0) to[bend left=35] (1.5,0);
    \draw[red, thick] (1.5,0) to[bend right=35] (3,0);

    \draw[blue, thick] (1.5,0) to[bend left=35] (3,0);
    \draw[blue, thick] (3,0) to[bend right=35] (4.5,0);

\end{tikzpicture}

\vspace{2pt}
\tiny {\bf (a)} \[ A_\cG = 
\begin{bmatrix}
0 & 1/2 & 1/2 & 0 \\
1/2 & 0 & 1 & 1/2 \\
1/2 & 1 & 0 & 1/2 \\
0 & 1/2 & 1/2 & 0
\end{bmatrix}
\]
\end{minipage}%
\hfill
\begin{minipage}[t]{0.31\textwidth}
\centering
\begin{tikzpicture}[scale=0.7, yscale=1.2]

    \foreach \i/\x in {1/0, 2/1.5, 3/3, 4/4.5} {
        \filldraw[black] (\x,0) circle (2pt);
        \node at (\x,-0.4) {\small \i};
    }

    \draw[orange, thick] (0,0) to[bend right=30] (1.5,0);
    \draw[orange, thick] (1.5,0) to[bend left=30] (3,0);
    \draw[orange, thick] (3,0) to[bend right=30] (4.5,0);

    \draw[blue, thick] (0,0) to[bend left=35] (1.5,0);

    \draw[red, thick] (1.5,0) to[bend right=35] (3,0);
    \draw[red, thick] (3,0) to[bend left=35] (4.5,0);

\end{tikzpicture}

\vspace{2pt}
\tiny {\bf (b)} \[ A_\cG = 
\begin{bmatrix}
0 & 4/3 & 1/3 & 1/3 \\
4/3 & 0 & 5/6 & 5/6 \\
1/3 & 5/6 & 0 & 5/6 \\
1/3 & 5/6 & 5/6 & 0
\end{bmatrix}
\]
\end{minipage}%
\hfill
\begin{minipage}[t]{0.31\textwidth}
\centering
\begin{tikzpicture}[scale=0.7, yscale=1.2]

    \foreach \i/\x in {1/0, 2/1.5, 3/3, 4/4.5} {
        \filldraw[black] (\x,0) circle (2pt);
        \node at (\x,-0.4) {\small \i};
    }
\draw[orange, thick] (0,0) to[bend left=35] (3,0);
    \draw[orange, thick] (3,0) to[bend right=47] (4.5,0);

    \draw[blue, thick] (0,0) to[bend right=47] (1.5,0);
    \draw[blue, thick] (1.5,0) to[bend left=35] (4.5,0);

    \draw[red, thick] (0,0) to[bend right=20] (1.5,0);
    \draw[red, thick] (1.5,0) to[bend left=20] (3,0);
    \draw[red, thick] (3,0) to[bend right=20] (4.5,0);

\end{tikzpicture}

\vspace{2pt}
\tiny {\bf (c)} \[ A_\cG = 
\begin{bmatrix}
0 & 5/6 & 5/6 & 4/3 \\
5/6 & 0 & 1/3 & 5/6 \\
5/6 & 1/3 & 0 & 5/6 \\
4/3 & 5/6 & 5/6 & 0
\end{bmatrix}
\]
\end{minipage}

\caption{\footnotesize Three non-weighted hypergraphs ($w\equiv1$) over the vertex set $V = [4]$, endowed with their adjacency matrices. Each hyperedge is represented as a bundle of 2-edges with the same colour, so that, noting ${ d} = (d(1),d(2), d(3), d(4))$ the corresponding degree vector, one has the following configurations: {\bf (a)} $\cE = \{\{1,2,3\}, \{2,3,4\}\}$, ${ d} = (1,2,2,1)$; {\bf (b)} $\cE = \{\{1,2\},\{1,2,3,4\}, \{2,3,4\}\}$, ${ d} = (2,3,2,2)$; {\bf (c)} $\cE = \{\{1,2,4\}, \{1,2,3,4\}, \{1,3,4\}\}$,  ${d} = (3,2,2,3)$. The hypergraph in {\bf (a)} is 3-uniform.  }
\label{fig:hyper}
\end{figure}
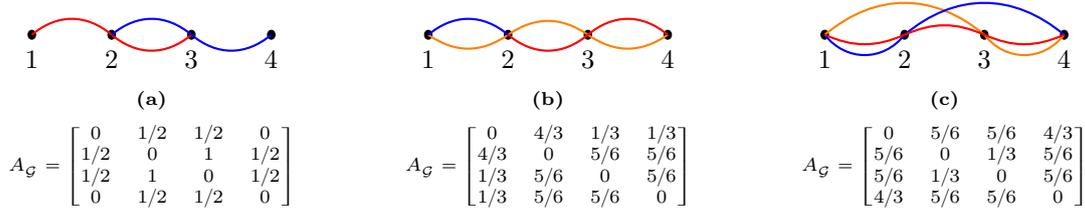

\smallskip 

As announced, we can now generalize the definitions introduced in Section \ref{sec:GRAPHS}. 
Denoting by $D$ the diagonal matrix with degrees $d(v_1),\ldots,d(v_N)$ as entries, we define the {\em Laplacian matrix} $L_{\cG} := D - A_{\cG}$ and the {\em normalized Laplacian matrix} associated with the hypergraph $\cG$ as $\cL_\cG := I - D^{-\frac{1}{2}} A_{\cG} D^{-\frac{1}{2}}$.
We observe that, for non-zero functions ${\bf g}=(g(v_1),\ldots, g(v_N)) \in \R^V$ and ${ \bf f}=(f(v_1),\ldots, f(v_N)):= D^{-\frac{1}{2}}{ \bf g}$, the \emph{Raleygh quotient} associated with $\cL_\cG$ at ${\bf f}$ is given by the ratio
\begin{equation*}
\begin{split}
    \frac{\langle {\bf g}, \cL_{\cG} {\bf g}\rangle }{\langle {\bf g},  {\bf g} \rangle }&=\frac{ \langle {\bf g},D^{-\frac{1}{2}}L_{\cG} D^{-\frac{1}{2}}{\bf g}\rangle}{\langle {\bf g},{\bf g}\rangle}=\frac{\langle {\bf f}, L_{\cG} {\bf f} \rangle }{\langle D^{\frac{1}{2}} {\bf f},  D^{\frac{1}{2}} {\bf f} \rangle }\\
    &=\frac{\sum_{v_i \sim v_j} \sum_{e \in \cE_{ij}} \frac{w(e)}{|e|-1} \,\big( f(v_i)-f(v_j) \big)^2}{\sum_{v_i\in V}d(v_i)f(v_i)^2} =: \cR_\cG( {\bf f})\,.
    \end{split}
\end{equation*}
In particular, recalling the variational characterization of the eigenvalues $\mu_1\leq \cdots \leq \mu_N$ of $\cL_{\cG}$ (which is valid in our case since $\cL_{\cG}$ is symmetric; see Remark \ref{r:minmax}), one has that
\begin{equation}\label{eq:eingenLhyper}
    \mu_k = \min_{\substack{\mathcal{M}\subseteq \R^V \\ {\rm dim}(\mathcal{M})=k }}\max_{{\bf f}\in \mathcal{M}\backslash \{0\}} R_{\cL_\cG}({\bf f})\,, \quad \forall k=1,\ldots,N,
\end{equation}
from which one infers that $0=\mu_1\leq \mu_2\leq \cdots \leq \mu_N\leq 2$.
\medskip

Given $\emptyset \neq S\subseteq V $, we define 
\begin{equation}
    \label{eq:E}
\partial S \coloneq \big\lbrace \, e \in \cE : \exists \, v_i,v_j \in e \text{ with }   v_i \in S, \ v_j \in \bar S \, \big\rbrace
\end{equation}
to be the set of edges with at least one vertex in $S$ and one in its complement $\bar S \coloneq V \setminus S$. Moreover, we define the {\em volume} of $S$ as
\begin{equation}\label{eq:volumeS}
    \text{vol}(S)=w(S)\coloneq \sum_{v \in S}d(v)=\sum_{v \in S} \sum_{e \in \cE_v} w(e)\,,
\end{equation}
where $\cE_v \coloneq \{ e \in \cE : v \in e\}$, while for $\cF \subseteq \cE$ we set \begin{equation}\label{eq:volumeF}
    w(\cF) \coloneq \sum_{e \in \cF} w(e)\,.
\end{equation}
The {\em edge expansion} of a nonempty subset $S \subseteq V$ is given by
\begin{equation}\label{eq:edgeexp}
\phi(S) :=  \frac{w(\partial S)}{\text{vol}(S)}\,,
\end{equation}
and for $k\geq 2$, we set
$$\phi_k( \cG) := \min_{\substack{S_1,...,S_k\\ \mbox{\tiny nonempty and disjoint}}} 
\max_{i=1,...,k}\phi(S_i).
$$

\begin{example}{\rm Consider the examples in Figure \ref{fig:hyper}, and set $S = \{1,2\}$, $\bar{S} = \{3,4\}$. Then one has the following computations: {\bf (a)} $\varphi(S) = \varphi(\bar{S}) = 2/3$; {\bf (b)} $\varphi(S) = 2/5$, $\varphi(\bar{S}) = 1/2$; {\bf (c)} $\varphi(S) = \varphi(\bar{S}) = 3/5$.

}
\end{example}

It is now possible to derive an analogue of the ``easy direction'' of the Cheeger inequality of order~$k$ (see Proposition \ref{p:easycheeger}) for hypergraphs. To this end, we take inspiration from \cite[Theorems~3.3 and~4.1]{B21}, where a bound is established for the hypergraph analogue of $\tilde{\varphi}(G)$ (see~\eqref{e:tilde}) in the case $k=2$.

\begin{proposition}\label{prop:cheegerhyp}
    Let $\cG =(V,\cE,w)$ be a weighted hypergraph as above. Then, for all $k=2,\ldots,N$,
    \begin{equation}\label{eq:easyCheeger}
      \mu_k \le \frac{2 \big( r(\cG)-1\big)^2}{cr(\cG)-1} \, \varphi_k (\cG)  \,,
    \end{equation}
    where $r(\cG)$ and $cr(\cG)$ are called, respectively, the {\em rank} and {\em co-rank} of $\cG$ and correspond to the maximum and the minimum of the cardinalities of the edges $e \in \cE$.
\end{proposition}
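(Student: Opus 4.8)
The plan is to mimic the proof of Proposition~\ref{p:easycheeger} for ordinary graphs, replacing the test functions $\mathbf{1}_{S_i}$ by the same indicators but now bounding the hypergraph Rayleigh quotient $\mathcal{R}_{\cG}$ from above in terms of $\varphi(S_i)$. Fix nonempty disjoint subsets $S_1,\ldots,S_k\subseteq V$ and, for $(\alpha_1,\ldots,\alpha_k)\in\mathbb{R}^k\setminus\{0\}$, consider $\mathbf{f}=\sum_{i=1}^k\alpha_i\,\mathbf{1}_{S_i}$. The denominator of $\mathcal{R}_{\cG}(\mathbf{f})$ is $\sum_{v\in V}d(v)f(v)^2=\sum_{i=1}^k\alpha_i^2\,\mathrm{vol}(S_i)$, exactly as in the graph case. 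For the numerator, first note the pointwise bound: for any edge $e$ and vertices $v_a,v_b\in e$ with $v_a\in S_i$, $v_b\in S_j$ (possibly $i=j$, in which case the term is $0$ since $f$ is constant on $S_i$), one has $(f(v_a)-f(v_b))^2=(\alpha_i-\alpha_j)^2\le 2(\alpha_i^2+\alpha_j^2)$ when $i\ne j$, and the edge contributes nothing when both endpoints are outside $S_1\cup\cdots\cup S_k$ or inside the same $S_i$.

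First I would regroup the numerator $\sum_{v_a\sim v_b}\sum_{e\in\cE_{ab}}\frac{w(e)}{|e|-1}(f(v_a)-f(v_b))^2$ by summing over edges $e$ rather than over pairs. For a fixed edge $e$ with $w(e)$ and $|e|=:\ell$, the contribution is $\frac{w(e)}{\ell-1}\sum_{\{v_a,v_b\}\subseteq e}(f(v_a)-f(v_b))^2$. Here is the key combinatorial step: if an edge $e$ contains vertices distributed among the blocks with $n_i:=|e\cap S_i|$ and $n_0:=|e\setminus(S_1\cup\cdots\cup S_k)|$ (so $\sum_i n_i+n_0=\ell$), then $\sum_{\{v_a,v_b\}\subseteq e}(f(v_a)-f(v_b))^2=\sum_{i}\sum_{j}n_i n_j(\alpha_i-\alpha_j)^2/2$ (with $\alpha_0:=0$), and one checks by a direct expansion that this equals $\ell\sum_i n_i\alpha_i^2-(\sum_i n_i\alpha_i)^2\le \ell\sum_i n_i\alpha_i^2$. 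Dividing by $\ell-1$ and using $\ell\le r(\cG)$ gives that the contribution of $e$ is at most $\frac{r(\cG)}{r(\cG)-1}\,w(e)\sum_i n_i\alpha_i^2$; moreover $e$ contributes to this bound only if $e$ meets at least one of the $S_i$ and is not contained in a single $S_i$ — but a cleaner route, paralleling the graph proof, is to keep only the edges in $\partial S_i$ (edges with a vertex in $S_i$ and one outside $S_i$, the ``$S_i$ vs.\ $\overline{S_i}$'' cut). Summing the bound over all edges, $\sum_e w(e)\sum_i n_i(e)\alpha_i^2=\sum_i\alpha_i^2\sum_{e}w(e)\,|e\cap S_i|=\sum_i\alpha_i^2\,\mathrm{vol}(S_i)$ by \eqref{e:hyperdegree}--\eqref{eq:volumeS}; the refinement is that the inner difference $\sum_{\{v_a,v_b\}\subseteq e}(f(v_a)-f(v_b))^2$ is nonzero only for $e$ that separate some pair of blocks, and by disjointness such an $e$ lies in $\partial S_i$ for every $i$ with $e\cap S_i\ne\emptyset$, which lets one replace $\mathrm{vol}(S_i)$ by $w(\partial S_i)$ up to the constant. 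Combining,
\begin{equation*}
\mathcal{R}_{\cG}(\mathbf{f})\ \le\ C(\cG)\,\frac{\sum_{i=1}^k\alpha_i^2\,w(\partial S_i)}{\sum_{i=1}^k\alpha_i^2\,\mathrm{vol}(S_i)}\ \le\ C(\cG)\,\max_{i=1,\ldots,k}\varphi(S_i),
\end{equation*}
where $C(\cG)=\frac{2(r(\cG)-1)^2}{cr(\cG)-1}$ after also using $|e|-1\ge cr(\cG)-1$ in the denominator to control the $\frac{w(e)}{|e|-1}$ weights from below; the exact bookkeeping that produces precisely the exponent $2$ and the stated ratio is where one must be careful, and it is here that the rank and co-rank both enter (the factor $(r-1)^2$ comes from two applications of $\ell\le r$ — one in the $\ell\sum n_i\alpha_i^2$ estimate and one in passing between $\mathrm{vol}$ and $w(\partial S_i)$ — while $cr-1$ appears as the worst-case value of $|e|-1$).

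Finally I would conclude exactly as in Proposition~\ref{p:easycheeger}: take $\mathcal{M}=\mathrm{span}\{\mathbf{1}_{S_1},\ldots,\mathbf{1}_{S_k}\}$, which has dimension $k$ since the $S_i$ are nonempty and disjoint; the display above shows $\max_{\mathbf{f}\in\mathcal{M}\setminus\{0\}}\mathcal{R}_{\cL_{\cG}}(\mathbf{f})\le C(\cG)\max_i\varphi(S_i)$; hence by the min-max formula \eqref{eq:eingenLhyper} we get $\mu_k\le C(\cG)\max_i\varphi(S_i)$, and taking the infimum over all choices of nonempty disjoint $S_1,\ldots,S_k$ yields $\mu_k\le C(\cG)\,\varphi_k(\cG)$, which is \eqref{eq:easyCheeger}. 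The main obstacle I anticipate is purely the constant-tracking in the middle paragraph: getting the inner sum $\sum_{\{v_a,v_b\}\subseteq e}(f(v_a)-f(v_b))^2$ bounded in a way that is simultaneously tight enough to recover the stated $\frac{2(r-1)^2}{cr-1}$ and clean enough to factor through $w(\partial S_i)$ rather than $\mathrm{vol}(S_i)$; the cleanest implementation is likely to follow \cite[Theorems 3.3 and 4.1]{B21} verbatim for the two-block cut estimate and then run the same $k$-block linear-combination argument as in \cite[Lemma 7.1]{trevisanLN}, which is exactly the structure used in Proposition~\ref{p:easycheeger}.
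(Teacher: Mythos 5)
Your proposal is correct and, at the structural level, it is the same argument as the paper's: test the min--max formula \eqref{eq:eingenLhyper} on the $k$-dimensional span of the indicators of nonempty disjoint sets $S_1,\ldots,S_k$, control the numerator of the Rayleigh quotient edge by edge using the rank and co-rank, and conclude by taking $\max_i \varphi(S_i)$ and then minimizing over the choice of the $S_i$. The difference lies in the middle bookkeeping. The paper uses the normalized test functions ${\bf f}_\ell=(\mathrm{vol}(S_\ell))^{-1/2}{\bf 1}_{S_\ell}$ (so the denominator is just $\sum_\ell\alpha_\ell^2$ and $\varphi(S_\ell)$ appears automatically), bounds the squared increment by $2\sum_\ell \tfrac{\alpha_\ell^2}{\mathrm{vol}(S_\ell)}\big(\ind_{S_\ell}(v_i)-\ind_{S_\ell}(v_j)\big)^2$ since at most two terms are nonzero (this is where the factor $2$ comes from), and then uses the per-block identity $\sum_{v_i\sim v_j}\sum_{e\in\cE_{ij}}\tfrac{w(e)}{|e|-1}\big(\ind_{S_\ell}(v_i)-\ind_{S_\ell}(v_j)\big)^2=\sum_{e\in\partial S_\ell}\tfrac{w(e)}{|e|-1}\,|e\cap S_\ell|\,|e\cap\overline{S_\ell}|\le \tfrac{(r(\cG)-1)^2}{cr(\cG)-1}\,w(\partial S_\ell)$, which is exactly where $2(r-1)^2/(cr-1)$ originates; no separate passage from $\mathrm{vol}(S_i)$ to $w(\partial S_i)$ is needed. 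Your per-edge expansion $|e|\sum_i n_i\alpha_i^2-(\sum_i n_i\alpha_i)^2$ is a perfectly viable alternative and, carried out as you sketch, closes the argument with a slightly better constant: an edge on which ${\bf f}$ is non-constant lies in $\partial S_i$ for every $i$ it meets, and $n_i(e)\le |e|-1\le r(\cG)-1$ gives a numerator at most $\tfrac{r(\cG)(r(\cG)-1)}{cr(\cG)-1}\sum_i\alpha_i^2\,w(\partial S_i)$, so the mediant inequality yields the bound with constant $\tfrac{r(r-1)}{cr-1}\le\tfrac{2(r-1)^2}{cr-1}$ (valid since $r\ge 2$), hence \eqref{eq:easyCheeger}. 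Two small repairs when you write this up: the intermediate constant $\tfrac{r(\cG)}{r(\cG)-1}$ is wrong as stated, because $\ell/(\ell-1)$ is \emph{decreasing} in $\ell$; you should bound $\ell\le r(\cG)$ in the numerator and $\ell-1\ge cr(\cG)-1$ in the denominator (as you in fact do at the end). And the restriction to edges on which ${\bf f}$ is non-constant must be imposed \emph{before} discarding $-(\sum_i n_i\alpha_i)^2$, since the discarded bound is no longer zero on edges contained in a single block; with that ordering, your replacement of $\mathrm{vol}(S_i)$ by $w(\partial S_i)$ is legitimate.
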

Note that the inequality \eqref{eq:easyCheeger} reduces to \eqref{eq:cheegereasydir} in the graph setting, where $r(\cG)=cr(\cG)=2$. We outline the main steps of the proof, which extends to the case of hypergraphs the strategy used in the proof of Proposition \ref{p:easycheeger}.

\begin{proof}[Proof of Proposition \ref{prop:cheegerhyp}]
    Let $S_1,...,S_k \subseteq V$ be non--empty disjoint subsets of $V$. For $\ell =1,\ldots,k$, let ${\bf f}_\ell\coloneq (\text{vol}(S_\ell))^{-\frac{1}{2}}{\bf 1}_{S_\ell}$, where ${\bf 1}_{S_\ell} \coloneq (\ind_{S_\ell}(v_1),\ldots,\ind_{S_\ell}(v_N))$. For all not identically zero vectors $(\alpha_1,...,\alpha_k)$, we have
    \begin{equation*}
    \begin{split}
        R_\cG \bigg( \sum_{\ell=1}^k \alpha_\ell { \bf f_\ell} \bigg) &= \frac{\sum_{e \in \cE} \sum_{\substack{v_i \sim v_j\\ v_i,v_j \in e}}\frac{w(e)}{|e|-1} \bigg( \sum_{\ell=1}^k\frac{\alpha_\ell}{\sqrt{\text{vol}(S_\ell)}} \big(\ind_{S_\ell}(v_i)-\ind_{S_\ell}(v_j) \big)\bigg)^2}{\sum_{v_i \in V}d(v_i) \bigg( \sum_{\ell=1}^{k}\frac{\alpha_\ell}{\sqrt{\text{vol}(S_\ell)}} \ind_{S_\ell}(v_i)\bigg)^2}\\
        &\le \frac{2 \sum_{\ell=1}^k \frac{\alpha_\ell^2}{\text{vol}(S_\ell)} \sum_{e \in \cE} \sum_{\substack{v_i \sim v_j\\ v_i,v_j \in e}} \frac{w(e)}{|e|-1}\big(\ind_{S_\ell}(v_i)-\ind_{S_\ell}(v_j) \big)^2}{\sum_{\ell=1}^k \alpha_\ell^2}\,,
    \end{split}
    \end{equation*}
    where the last inequality is a consequence of the fact that $S_1,\ldots,S_k$ are pairwise disjoint, hence the sum over $\ell$ contains at most two non-zero terms. Observe indeed that for any $\ell=1,\ldots,k$, the term $\big(\ind_{S_\ell}(v_i)-\ind_{S_\ell}(v_j)\big)$ does not vanish if and only if exactly one of the vertices $v_i$ and $v_j$ belongs to $S_\ell$. As a consequence, we can rewrite
    \begin{equation*}
    \begin{split}
        \sum_{v_i \sim v_j} \sum_{e \in \cE_{ij}}\big(\ind_{S_\ell}(v_i)-\ind_{S_\ell}(v_j) \big)^2 =\sum_{e \in \partial S_\ell} \sum_{\substack{v_i \sim v_j\\ v_i,v_j \in e}}\big(\ind_{S_\ell}(v_i)-\ind_{S_\ell}(v_j) \big)^2 = \sum_{e \in \partial S_\ell} |e \cap S_\ell | \, | e \cap \widebar{S_\ell} |\,,
    \end{split}
    \end{equation*}
    and, for any $e \in \partial S_\ell$, we have $|e \cap S_\ell | \, | e \cap \widebar{S_\ell} | \le (|e|-1)^2 \le (r(\cG)-1)^2$. Using the bound $|e|-1 \ge cr(\cG)-1$, we eventually obtain that
    \begin{equation*}
    \begin{split}
        R_\cG \bigg( \sum_{\ell=1}^k \alpha_\ell { \bf f_\ell} \bigg) &\le \frac{ 2 (r(\cG)-1)^2}{cr(\cG)-1} \frac{\sum_{\ell=1}^k \alpha_\ell^2 \varphi(S_\ell)}{\sum_{\ell=1}^k \alpha_\ell^2} \le \frac{ 2 (r(\cG)-1)^2}{cr(\cG)-1} \max_{\ell=1,\ldots,k} \varphi (S_\ell)\,.
    \end{split}
    \end{equation*}
The conclusion now follows from \eqref{eq:eingenLhyper}, following the same route as in the graph setting.
\end{proof}

As a consequence of Proposition \ref{prop:cheegerhyp}, we state an estimate analogous to\eqref{e:simple} for hypergraphs, that one can deduce from arguments similar to those in the proof of Proposition~\ref{prop:simple}.

\begin{proposition} Let the above assumptions prevail, and fix integers $1\leq m\leq N$. Consider a collection $B_1,...,B_m$ of nonempty disjoint subsets of $V$, and assume that
\begin{equation}\label{e:orderinghyp}
\phi(B_1)\le \phi(B_2)\le \cdots \le \phi(B_m). 
\end{equation}
Then, for all $k=1,...,m$,
\begin{equation}\label{e:simplehyp}
 \mu_k \leq \frac{2 \big(r(\cG)-1\big)^2}{ cr(\cG)-1}\, \frac{\sum_{i=k}^m \mathrm{vol} (B_i) \, \phi(B_i)}{\sum_{i=k}^m \mathrm{vol}(B_i)}  . 
\end{equation}
\end{proposition}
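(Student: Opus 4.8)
The plan is to follow \emph{verbatim} the route used for Proposition~\ref{prop:simple}, replacing the ``easy'' Cheeger inequality for graphs (Proposition~\ref{p:easycheeger}) by its weighted-hypergraph counterpart, Proposition~\ref{prop:cheegerhyp}. So I would fix $k\in\{1,\dots,m\}$ and first exploit the ordering \eqref{e:orderinghyp}: since $\phi(B_k)\le\phi(B_i)$ for every $i\ge k$, the weighted average of the $\phi(B_i)$'s over $i=k,\dots,m$ with weights $\mathrm{vol}(B_i)$ dominates its smallest term, i.e.
\[
\frac{\sum_{i=k}^m \mathrm{vol}(B_i)\,\phi(B_i)}{\sum_{i=k}^m \mathrm{vol}(B_i)} \;\ge\; \phi(B_k).
\]

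Next I would bound $\phi(B_k)$ from below by a spectral quantity. Since $B_1,\dots,B_k$ are $k$ nonempty, pairwise disjoint subsets of $V$, the definition of $\phi_k(\cG)$ gives $\phi_k(\cG)\le\max_{i=1,\dots,k}\phi(B_i)$, and by \eqref{e:orderinghyp} this maximum equals $\phi(B_k)$; hence $\phi(B_k)\ge\phi_k(\cG)$. Feeding this into Proposition~\ref{prop:cheegerhyp}, namely $\mu_k\le\frac{2(r(\cG)-1)^2}{cr(\cG)-1}\,\phi_k(\cG)$, one gets
\[
\phi(B_k)\;\ge\;\phi_k(\cG)\;\ge\;\frac{cr(\cG)-1}{2\big(r(\cG)-1\big)^2}\,\mu_k .
\]
Chaining the two displays and rearranging yields \eqref{e:simplehyp}.

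I do not expect any genuine obstacle here: both ingredients — the weighted-average comparison forced by the ordering, and the hypergraph Cheeger bound — are already available, and the argument is a line-by-line transcription of the proof of Proposition~\ref{prop:simple}. The only point deserving a moment of care is carrying the constant $\tfrac{2(r(\cG)-1)^2}{cr(\cG)-1}$ through the chain; it collapses to the constant $2$ appearing in \eqref{e:simple} exactly when $r(\cG)=cr(\cG)=2$, i.e.\ in the graph case, which is the natural consistency check.
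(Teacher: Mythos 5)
Your proof is correct and is exactly the argument the paper intends: the paper omits the proof, stating only that it follows from "arguments similar to those in the proof of Proposition~\ref{prop:simple}", and your line-by-line transcription — weighted average dominated below by $\phi(B_k)$, then $\phi(B_k)\ge\phi_k(\cG)\ge\frac{cr(\cG)-1}{2(r(\cG)-1)^2}\mu_k$ via Proposition~\ref{prop:cheegerhyp} — is precisely that adaptation, with the constant handled correctly (the only cosmetic remark is that the case $k=1$ is trivial since $\mu_1=0$, just as in the graph version).
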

}

\subsection{Irreducibility}
{The aim of this section is to state a generalization of Theorem \ref{th:mainSPECTRUMgraph} to the setting of homogeneous sums of a generic order $d\geq 2$. To this end, we consider a sequence $\left\{(V_n,E_n,q_n) : n\geq 1\right\}$ such that, for a fixed $d \ge 2$: (i) $\{V_n\}$ verifies \eqref{eq:propV}, (ii) $\{E_n\}$ satisfies the subsequent requirements \eqref{item:VE1}--\eqref{item:VE3}, and (iii) the symmetric coefficients $\{q_n\}$ verify \eqref{e:varinfty}. Following the convention \eqref{e:weights}, we continue to use the notation $w_n(\cdot) = q_n(\cdot)^2$. 

\smallskip

For each $n\geq 1$, the triple $(V_n,E_n,q_n)$ is canonically associated with the weighted hypergraph $\cG_n = (V_n, \mathcal{E}_n, w_n)$ such that\footnote{Due to the symmetry of $E_n$, the way in which the elements of a given $d$-subset $\{v_1,...,v_d\}$ are enumerated is immaterial. } 
\begin{eqnarray}\label{e:can1}
&& \mathcal{E}_n = \{ \{v_1,...,v_d\}\subset V_n : (v_1,...,v_d)\in E_n\},\quad \mbox{and}\\ \label{e:can2}
  && w_n(\{v_1,...,v_d\}) = w_n(v_1,...,v_d) = q_n(v_1,...,v_d)^2.  
\end{eqnarray}
 Note that, by construction, each $\mathcal{G}_n$ is $d$-uniform, and consequently $r(\cG_n)=cr(\cG_n)=d$ (see Proposition \ref{prop:cheegerhyp}). Also, one has that $|E_n| = d!|\mathcal{E}_n|$. For $n\ge 1$, the Laplace spectrum of $\cG_n$ is written $0 = \mu_1^{(n)}\leq \mu_2^{(n)}\le \cdots \le \mu_{N_n}^{(n)}\leq 2$.

\smallskip 

As before, we denote by $\{Z_n\} = \{Z_n({\bf X}) : n\geq 1\} $ the sequence of homogeneous sums defined in \eqref{eq:defpolchaos}, and we observe that the second relation in \eqref{eq:secmomZn} implies that
\begin{equation}\label{e:hypvariance}
\mathbb{E}[Z_n^2] = d!^2 w_n(\cE_n),
\end{equation}
where we have used the notation \eqref{eq:volumeF} in the case $\mathcal{F} = \mathcal{E}_n$.

\medskip
In order to generalize Lemma \ref{def:irreducibility22} 
in the context of hypergraphs, 
we define the set of edges entirely contained in a nonempty subset $S \subseteq V$ as follows:
\begin{equation}\label{eq:edgesinS}
   \begin{split}
        \cE_n (S,S) &:= \big\lbrace \, e \in \cE_n : e \subset S \, \big\rbrace.
   \end{split}
\end{equation}

The following statement is the exact analogous of Lemma \ref{def:irreducibility22}. The proof is left to the reader.

\begin{lemma}[Reducibility and hypergraphs]\label{def:irreducibility2}
 Let the above assumptions and conventions prevail. Then, the sequence $\{Z_n\}$ is reducible in the sense of Definition \ref{def:irreducibility} if and only if there exist partitions $\{B_1,\ldots,B_{m_n}\} = \{B^{(n)}_1,\ldots,B^{(n)}_{m_n}\}$ of the vertex sets $V_n$, such that, as $n \to \infty$,
    \begin{enumerate}[label={\rm (\roman*)}]
    \item \label{item:irr1hyp} $m_n\to \infty$;
    \item \label{item:irr2hyo} $\sum_{i=1}^{m_n} w_n( \cE(B_i,B_i)) \sim w_n(\mathcal{E}_n)$;
    \item \label{item:irr3hyp} $\max_{i=1,...,m_n} w_n( \cE(B_i,B_i)) = o\big( w_n(\mathcal{E}_n)\big)$.  
\end{enumerate}
\end{lemma}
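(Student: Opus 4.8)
The statement is the hypergraph analogue of Lemma~\ref{def:irreducibility22}, so the plan is to unwind Definition~\ref{def:irreducibility} directly, using the canonical correspondence \eqref{e:can1}--\eqref{e:can2} between the triple $(V_n,E_n,q_n)$ and the weighted hypergraph $\cG_n=(V_n,\mathcal{E}_n,w_n)$. The single computational identity that makes everything work is the translation of $\sigma_n^2(B)$, as defined in \eqref{eq:sigmaBdef}, into the language of hyperedges contained in $B$: for any $B\subseteq V_n$,
\begin{equation*}
\sigma_n^2(B) = d! \!\!\!\sum_{\substack{(v_1,\ldots,v_d)\in\\ E_n\cap(B\times\cdots\times B)}}\!\!\! w_n(v_1,\ldots,v_d) = d!\,d!\!\!\!\sum_{e\in\cE_n(B,B)}\!\!\! w_n(e) = (d!)^2\, w_n\big(\cE_n(B,B)\big),
\end{equation*}
where the middle equality uses that each unordered hyperedge $e=\{v_1,\ldots,v_d\}\subset B$ with $e\in\cE_n$ corresponds to exactly $d!$ ordered tuples in $E_n\cap(B\times\cdots\times B)$ (by symmetry \eqref{item:VE2} and non-diagonality \eqref{item:VE3} of $E_n$), each carrying the same weight $w_n(e)$ by symmetry of $q_n$. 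Together with \eqref{e:hypvariance}, namely $\mathbb{E}[Z_n^2]=d!\,\|q_n\|^2=(d!)^2 w_n(\cE_n)$, this gives $d!\,\|q_n\|^2=(d!)^2 w_n(\cE_n)$, so that for every $B$,
\begin{equation*}
\frac{\sigma_n^2(B)}{d!\,\|q_n\|^2} = \frac{w_n\big(\cE_n(B,B)\big)}{w_n(\cE_n)}.
\end{equation*}

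\textbf{Key steps.} First I would record the identity above, checking the combinatorial factor $d!$ carefully (this is the only place where Properties \eqref{item:VE2}--\eqref{item:VE3} enter, and it is exactly the higher-order generalisation of the relation $\sigma_n^2(B)=2|E_n\cap(B\times B)|=4E(B,B)$ noted before Lemma~\ref{def:irreducibility22}). Second, I would substitute this into conditions \ref{redCLTcond2} and \ref{redCLTcond3} of Definition~\ref{def:irreducibility}: the normalising constant $d!\,\|q_n\|^2$ cancels against $(d!)^2$, leaving precisely the ratios $\sum_i w_n(\cE_n(B_i,B_i))/w_n(\cE_n)\to 1$ and $\max_i w_n(\cE_n(B_i,B_i))/w_n(\cE_n)\to 0$, which are \ref{item:irr2hyo} and \ref{item:irr3hyp}; condition \ref{redCLTcond1} is literally \ref{item:irr1hyp}. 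Third — and this is the only genuine (if minor) point requiring an argument rather than a substitution — I would note that in Definition~\ref{def:irreducibility} the boxes $B_1,\ldots,B_{m_n}$ are merely required to be pairwise disjoint, whereas Lemma~\ref{def:irreducibility2} asserts reducibility is equivalent to the existence of a \emph{partition}; so I would show one may always complete a disjoint family to a partition without affecting \ref{item:irr1hyp}--\ref{item:irr3hyp}. This is done exactly as in the graph case: if $\bigcup_i B_i\subsetneq V_n$, adjoin the leftover vertices as singletons $\{v\}$ (or lump them into one extra block); since hyperedges have $d\ge 2$ vertices, $\cE_n(\{v\},\{v\})=\emptyset$, so singletons contribute $0$ to all three quantities, and \ref{item:irr1hyp}--\ref{item:irr3hyp} are preserved. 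Conversely a partition is in particular a disjoint family, so the two formulations coincide.

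\textbf{Main obstacle.} There is no serious obstacle: the content is entirely bookkeeping, and the lemma is explicitly flagged in the text as ``the exact analogue of Lemma~\ref{def:irreducibility22}'' with proof left to the reader. The one spot that deserves care is the combinatorial normalisation in the displayed identity — getting the power of $d!$ right on both sides (it appears once from the ordered-vs-unordered count of hyperedges and once more from $\mathbb{E}[Z_n^2]=(d!)^2 w_n(\cE_n)$) — together with the routine but necessary remark that disjoint families and partitions give the same notion of reducibility here because every hyperedge has at least two vertices, so singleton blocks are inert. With the identity in hand, the equivalence of \ref{redCLTcond1}--\ref{redCLTcond3} with \ref{item:irr1hyp}--\ref{item:irr3hyp} is immediate.
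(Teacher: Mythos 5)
Your proposal is correct and is exactly the bookkeeping argument the paper intends: the paper leaves the proof to the reader as "the exact analogue" of Lemma~\ref{def:irreducibility22}, and your identity $\sigma_n^2(B)=(d!)^2\,w_n(\cE_n(B,B))$ together with $d!\,\|q_n\|^2=(d!)^2\,w_n(\cE_n)$ (from \eqref{eq:sigmaBdef}, \eqref{e:can1}--\eqref{e:can2} and \eqref{e:hypvariance}) is precisely the translation that makes Definition~\ref{def:irreducibility} equivalent to \ref{item:irr1hyp}--\ref{item:irr3hyp}. Your handling of the disjoint-family-versus-partition point (completing by inert singleton blocks, since no hyperedge fits in a single vertex) is the right and standard fix, so there is nothing to add.
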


\begin{definition}\label{def:irreducibileHG}{\rm Let $\cG_n = (V_n, \mathcal{E}_n, w_n)$, $n\geq 1$, be a sequence of weighted hypergraphs such that $|V_n|,\, |\cE_n|,\,  w_n(\cE_n)\to \infty$. Consider a sequence $\{Z_n\}$ defined as in \eqref{eq:defpolchaos} such that \eqref{e:can1}--\eqref{e:can2} are verified. We say that $\{Z_n\}$ is a {\em sequence of homogeneous sums associated with} $\{\cG_n\}$. As before, we say that $\{\cG_n\}$ {\em generates an irreducible CLT}, if $\{Z_n\}$ verifies an irreducible CLT in the sense of Definition \ref{def:irreducibility}. If $\{Z_n\}$ is reducible, we will say that $\{\cG_n\}$ {\em is reducible} or, more precisely, that $\{\cG_n\}$ {\em generates a reducible CLT}. 
 }
\end{definition}

\begin{remark}{\rm A natural open question is whether, for $d \geq 3$, one can obtain a spectral characterization of Condition~\ref{item:cond2thm3.1} in Theorem~\ref{t:unidejong}, analogous to the necessary and sufficient condition~\eqref{e:degreeo} established in the case $d=2$, but now based on the spectral analysis of hypergraphs developed in the previous section. We refer the reader to the breakthrough work by Herry, Malicet, and Poly~\cite{HeMaPolyLaw} for significant progress in this direction.

}
\end{remark}

We state the main result of the section, which establishes the sufficient condition for irreducibility.

\begin{theorem}\label{th:mainresulthyper}
Fix $d\geq 2$ and let $\{Z_n\}$ be a sequence of homogeneous sum as in \eqref{eq:defpolchaos}. Let $\cG_n=(V_n,\cE_n,w_n)$, $n\geq 1$, be the sequence of weighted hypergraphs associated to $\{Z_n\}$ via \eqref{e:can1}--\eqref{e:can2}. If there exists $k \ge 2$ such that, as $n\to\infty$,
    \begin{equation}
        \label{eq:liminfhyp}
        \liminf_{n} \mu_k^{(n)} > 0\,,
    \end{equation}
then $\{Z_n\}$ is irreducible in the sense of Definition \ref{def:irreducibility}.
\end{theorem}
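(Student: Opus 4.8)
The plan is to mirror, in the hypergraph setting, the contradiction argument used for Theorem~\ref{th:mainSPECTRUMgraph}. Suppose that \eqref{eq:liminfhyp} holds for some $k\ge 2$ but that $\{Z_n\}$ is reducible. By Lemma~\ref{def:irreducibility2}, there exist partitions $\{B_1,\ldots,B_{m_n}\}$ of $V_n$ satisfying \ref{item:irr1hyp}, \ref{item:irr2hyo} and \ref{item:irr3hyp}. Reindexing, we may assume the ordering \eqref{e:orderinghyp}, so that the estimate \eqref{e:simplehyp} applies with $r(\cG_n)=cr(\cG_n)=d$, giving
\begin{equation*}
\frac{cr(\cG_n)-1}{2(r(\cG_n)-1)^2}\,\mu_k^{(n)} \;=\; \frac{\mu_k^{(n)}}{2(d-1)}\;\le\; \frac{\sum_{i=k}^{m_n}\mathrm{vol}(B_i)\,\phi(B_i)}{\sum_{i=k}^{m_n}\mathrm{vol}(B_i)}\,.
\end{equation*}
Since $d$ is fixed, it suffices to show the right-hand side tends to $0$, which (as in the graph case) reduces to two claims: the numerator $\sum_{i=k}^{m_n} q_i^{(n)}\phi(B_i)\to 0$, where $q_i^{(n)}:=\mathrm{vol}(B_i)/\mathrm{vol}(V_n)$, and the denominator $\sum_{i=k}^{m_n} q_i^{(n)}\to 1$, i.e.\ $\sum_{i=1}^{k-1} q_i^{(n)}\to 0$.

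The key structural identity to establish first is the hypergraph analogue of \eqref{eq:parti}, namely a ``volume decomposition''. For each $i$, a vertex-degree double count gives $\mathrm{vol}(B_i)=\sum_{v\in B_i} d(v) = \sum_{v\in B_i}\sum_{e\ni v} w_n(e)$. Splitting edges $e$ incident to $B_i$ into those contained in $B_i$ and those with a vertex outside, one gets
\begin{equation*}
\mathrm{vol}(B_i) \;=\; \sum_{e\in\cE_n(B_i,B_i)} |e|\, w_n(e) \;+\; \sum_{e\in\partial B_i} |e\cap B_i|\, w_n(e)\,,
\end{equation*}
so that, using $|e|=d$ and $1\le |e\cap B_i|\le d-1$ on the boundary term, $d\, w_n(\cE_n(B_i,B_i)) \le \mathrm{vol}(B_i)\le d\, w_n(\cE_n(B_i,B_i)) + (d-1)\, w_n(\partial B_i)$. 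Summing over $i$ and using that the $B_i$ form a partition, each edge $e\in\cE_n$ is counted with total $B_i$-multiplicity exactly $|e|=d$ across the containing-or-crossing terms; more precisely $\sum_{i=1}^{m_n}|e\cap B_i| = |e| = d$ for every $e$, whence $\sum_{i=1}^{m_n}\mathrm{vol}(B_i)=\mathrm{vol}(V_n)=\sum_{e\in\cE_n} d\, w_n(e) = d\, w_n(\cE_n)$. Combining this with \ref{item:irr2hyo} yields $\sum_{i=1}^{m_n} w_n(\partial B_i) = o(w_n(\cE_n))$, because $\sum_i \mathrm{vol}(B_i) - d\sum_i w_n(\cE_n(B_i,B_i)) = d(w_n(\cE_n) - \sum_i w_n(\cE_n(B_i,B_i))) = o(w_n(\cE_n))$ controls $\sum_i (d-1) w_n(\partial B_i)$ from above. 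Hence $\sum_{i=1}^{m_n} q_i^{(n)}\phi(B_i) = \sum_i w_n(\partial B_i)/\mathrm{vol}(V_n) = o(1)$, settling the numerator.

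For the denominator, as in the graph proof one bounds each $q_i^{(n)}$ uniformly: $q_i^{(n)} = \mathrm{vol}(B_i)/\mathrm{vol}(V_n) \le \big(d\, w_n(\cE_n(B_i,B_i)) + (d-1) w_n(\partial B_i)\big)/(d\, w_n(\cE_n))$, and both $w_n(\cE_n(B_i,B_i))$ (by \ref{item:irr3hyp}) and $w_n(\partial B_i)$ (dominated by $\sum_j w_n(\partial B_j) = o(w_n(\cE_n))$) are $o(w_n(\cE_n))$ uniformly in $i$; so $\max_i q_i^{(n)}\to 0$, giving $\sum_{i=1}^{k-1} q_i^{(n)}\le (k-1)\max_i q_i^{(n)}\to 0$ since $k$ is fixed. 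Therefore the right-hand side above converges to $0$ while $\liminf_n \mu_k^{(n)}>0$, a contradiction; this proves irreducibility. The one point requiring care — the main obstacle, though a mild one — is the bookkeeping for the weighted-degree / boundary decomposition, i.e.\ verifying that the partition property forces $\sum_i |e\cap B_i| = |e|$ and hence the clean identity $\sum_i \mathrm{vol}(B_i) = d\, w_n(\cE_n)$; once this is in place, the rest is a direct transcription of the argument for Theorem~\ref{th:mainSPECTRUMgraph} with the constant $2$ replaced by $2(d-1)$ coming from \eqref{e:simplehyp}. (Note also that reducibility of $\{Z_n\}$ forces $w_n(\cE_n)\to\infty$ and $|\cE_n|\to\infty$, consistent with Definition~\ref{def:irreducibileHG}, so all denominators above are eventually positive.)
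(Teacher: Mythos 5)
Your proof is correct and follows the same overall architecture as the paper's: argue by contradiction via Lemma~\ref{def:irreducibility2}, order the blocks as in \eqref{e:orderinghyp}, apply the multiway Cheeger bound \eqref{e:simplehyp} with $r(\cG_n)=cr(\cG_n)=d$, and then show that the numerator $\sum_{i\ge k} q_i^{(n)}\phi(B_i)$ vanishes while the denominator $\sum_{i\ge k} q_i^{(n)}$ tends to $1$. The only genuine difference lies in the combinatorial bookkeeping for the key estimate $\sum_i w_n(\partial B_i)=o(w_n(\cE_n))$: you obtain it from a weighted vertex-degree double count, $\mathrm{vol}(B_i)=d\,w_n(\cE_n(B_i,B_i))+\sum_{e\in\partial B_i}|e\cap B_i|\,w_n(e)$ together with $\sum_i |e\cap B_i|=d$ for every edge, whereas the paper (see \eqref{eq:partihyp}) decomposes each $\partial B_i$ as a disjoint union of the sets $\cE(B_i,B_{i_1},\ldots,B_{i_\alpha})$ and compensates overcounting with the factor $\tfrac{1}{\alpha+1}$; both routes yield the same inequality $\sum_i w_n(\partial B_i)\le d\big(w_n(\cE_n)-\sum_i w_n(\cE_n(B_i,B_i))\big)$, and your edge-incidence count is arguably the leaner way to organize it. One small slip: your parenthetical claim that the $o(w_n(\cE_n))$ quantity ``controls $\sum_i (d-1)\,w_n(\partial B_i)$ from above'' is false in general for $d\ge 3$ (a crossing edge may touch up to $d$ blocks, so $\sum_i w_n(\partial B_i)$ can be as large as $d\sum_{e\,\text{crossing}}w_n(e)$, and multiplying by $d-1$ overshoots); what you actually need, and what your displayed identity gives via $|e\cap B_i|\ge 1$, is control of $\sum_i w_n(\partial B_i)$ with constant $1$, so the conclusion stands unchanged. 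The remaining steps (the exact normalization $\sum_i q_i^{(n)}=1$ and the uniform bound $\max_i q_i^{(n)}=o(1)$ from \ref{item:irr3hyp} plus the boundary estimate) coincide with the paper's \eqref{eq:BBbarhyp} and \eqref{eq:pbound}.
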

The proof of Theorem \ref{th:mainresulthyper} is provided in Appendix \ref{appendix}: it consists of a technical generalization of the proof of Theorem \ref{th:mainSPECTRUMgraph}.

\smallskip

\begin{example}[Rook-like hypergraphs]\label{ex:rooklike} {\rm For every $d\geq 3$, we will now build an example of a sequence of homogeneous sums $\{Z_n\}$, of order $d\geq 3$, and such that (i) they verify an irreducible CLT, (ii) irreducibility follows from Theorem \ref{th:mainresulthyper} via the fact that the adjacency matrices \eqref{eq:adjacencymatrix} of the associated weighted hypergraphs (via \eqref{e:can1}--\eqref{e:can2}) coincide with those of the Rook's graph discussed in Example \ref{item:completegraph}-\ref{item:ci} of Section \ref{subsec:examples}. To this end, for every $n > d$ large enough, define $\cG_n = (V_n, \cE_n, w_n)$ to be the hypergraph such that $V_n = [n]^2$, $\cE_n$ is given by all $d$-subset with the form
$$
e = \{(a,b_1),(a,b_2),..., (a,b_d)\} \quad \mbox{or}\quad e = \{(b_1,a),(b_2,a),..., (b_d,a)\}
$$
with $a\in [n]$ and $b_1,...,b_d$ distinct, and
we choose a constant weight (for convenience, we fix the weight so that it yields the same adjacency matrix of the Rook's graph)
$$
w_n(e) \equiv (d-1)  \binom{n-2}{ d-2}^{-1}, \quad e\in \mathcal{E}_n.
$$
It is easily seen that two vertices $v_1= (a_1,b_1), \, v_2=(a_2,b_2)$ are adjacent in $\cG_n$ if and only if they are adjacent in the Rook's graph and, in this case, they are both contained in exactly $\binom{n-2}{d-2}$ edges. The choice of $w_n$ then ensures that the adjacency matrix of $\cG_n$, as given in \eqref{eq:adjacencymatrix}, coincides with that of the Rook's graph. Writing $0 = \mu_1^{(n)}\leq \mu_2^{(n)} \leq \cdots \leq \mu_{n^2}^{(n)}$ for the normalized Laplace eigenvalues of $\cG_n$, one therefore has that $\mu_k^{(n)}\longrightarrow \frac12$ for all $k\geq 2$, and Theorem \ref{th:mainresulthyper} implies that any associated sequence of homogeneous sums
$\{Z_n\}$ (via \eqref{e:can1}--\eqref{e:can2}) is irreducible. We observe that
$$
|\cE_n| = 2 n\binom{n}{d} \asymp n^{d+1}, \quad n\to\infty. 
$$
We now define $\{Z_n\}$ according to \eqref{eq:defpolchaos}, with 
$$
q_n(v_1,...,v_d) = \begin{cases}
\sqrt{d-1}\binom{n-2}{d-2}^{-1/2}, & \{v_1,...,v_d\}\in \mathcal{E}_n \\
0,  & \text{otherwise. }
\end{cases}
$$
In this way, $\mathbb{E}[Z_n^2]\asymp n,$ and a direct computation shows that the numerical sequences defined in \eqref{e:qn} scale as $O(n^{-1})$ for every $r=1,...,d-1$, and thus $\{Z_n\}$ verifies an irreducible CLT.}
    
\end{example}

\begin{example}[3-uniform hypergraphs generating an irreducible CLT]\label{e:3uni}{\rm We present the following example as an application of Theorem \ref{th:mainresulthyper}. Consider the vertex set 
\begin{equation*}
    V_n = \{ (a,b) : 1 \le a \ne b \le n \} \subseteq \N \times \N
\end{equation*}
in such a way that $N_n=|V_n|=n(n-1)$. For every $n$, we define the symmetric and non-diagonal set \( \cE_n \subseteq (V_n)^3 \) consisting of all ordered triples \( (v_1, v_2, v_3) \), where each \( v_i = (a_i, b_i)\) is an element of $V_n$, satisfying the following conditions:
\begin{itemize}
  \item For every pair \( \ell \ne k \), the entries \( v_\ell \) and \( v_k \) share exactly one coordinate:  
  \( |\{a_\ell, b_\ell\} \cap \{a_k, b_k\}| = 1 \).
  \item The number of distinct labels among \( \{a_1, b_1, a_2, b_2, a_3, b_3\} \) equals 3.
\end{itemize}
Note that $E_n$ is the disjoint union of eight sets $E_n^{(1)},\ldots,E_n^{(8)}$, each one with a similar structure and of the same size as
\begin{equation*}
    E_n^{(1)}= \Big\lbrace \,\big( (a,b)\,, \, (b,c)\,, \, (c,a)\big) : 1 \le a \ne b \ne c \ne a \le n\,\Big\rbrace\,.
\end{equation*}
Thus, \( |E_n| = 8n(n-1)(n-2) \), and the hypergraph \( \cG_n = (V_n, \cE_n) \), obtained from $\{E_n\}$ by setting $q_n\equiv 1$ and using \eqref{e:can1}, is unweighted and 3-uniform. For $n\geq 3$, we consider the homogeneous sum
$$
Z_n := \sum_{v_1,v_2,v_3 \in V_n} \ind_{E_n}(v_1,v_2,v_3) \, X_{v_1}  X_{v_2} X_{v_3},
$$
whose variance is commensurate to $n^3$, as $n\to \infty$. To determine the irreducibility of $\{Z_n\}$, we observe that $\cG_n=(V_n,\cE_n)$ has adjacency matrix
\begin{equation*}
    (A_{\cG_n})_{ij} = \sum_{e \in \cE_{ij}} \frac{1}{|e|-1}=\frac{1}{2} \, \big|  \{ \,e \in \cE_n : v_i,v_j \in e \, \} \big| \,, \quad v_i, \, v_j \in V_n\,, \ i \ne j\,.
\end{equation*}
In particular, when $v_i$ and $v_j$ are adjacent, say $v_i=(a,b)$ and $v_j=(b,c)$, they are contained in exactly two edges, which have $(a,c)$ and $(c,a)$ as their third vertices. This implies that $|\cE_{ij}|=2$, thus $(A_{\cG_n})_{ij} = \ind_{ \{v_i \sim v_j\}}$ for every $v_i,\, v_j \in V_n$, where the adjacency relation $\sim$ is given by
\begin{equation*}
    v_i \sim v_j \ \iff \ v_i=(a,b)\,, \, v_j=(a',b') \ \text{ and either } \
    \begin{cases}
        a=a'\,, \ b \ne b'\,, \text{ or}\\
        b=b'\,, \ a \ne a'\,, \text{ or}\\
        a=b'\,, \ a' \ne b\,, \text{ or}\\
        a'=b\,, \ a \ne b'\,.\\
    \end{cases}
\end{equation*}
One can verify that this equivalence relation induces a graph structure on the vertex set \( V_n \), corresponding to the union of two graphs: the Rook’s graph (see Example~\ref{e:ridux}-\eqref{it:ridux-rook}) with the diagonal removed, and a second graph—also isomorphic to the diagonal-free Rook’s graph—obtained by permuting the two coordinates of each vertex. A direct computation yields that the degree of each vertex is $d_n = 4(n-2)$, and the spectrum of $A_{\cG_n}$ is given by the integers $\{ \, 4(n-2), \, (2n-8), \, 0, \, -4\} $ with multiplicities $1, \, (n-1), \, n(n-1)/2$ and $-1+(n-1)(n-2)/2$. As a consequence, the Laplace spectrum of $\cG_n$ is given by $\{0, \, n/(2(n-2)), \, 1, \, (n-1)/(n-2) \} $, with the same respective multiplicities. Since, for all fixed $k\geq 2$, one has that $\lim_{n\to\infty}\mu_k^{(n)} = \frac12$, we deduce from Theorem~\ref{th:mainresulthyper} that $\{Z_n\}$ is irreducible. Finally, we remark that this example will be revisited in  the subsequent section, where we will show that the hyperedge sets $\{\cE_n\}$  can be realized as a special cases of a {\em fractional Cartesian product}, as defined in Example~\ref{e:fcp}. As such, the asymptotic normality of the (normalized) sequence \( \{Z_n\} \) will follow directly from Proposition~\ref{cor:fracprod}.
}
\end{example}

}

\section{Irreducibility via combinatorial dimensions}\label{sec:sparsity}
{ 
The aim of this section is to prove Theorem \ref{th:irreducombdim}. 
The proof of Part~\ref{item:partB} (see Section \ref{ss:proofb}) is based on an explicit construction, described in full detail in Example~\ref{e:fcp} below. The arguments used in this part are purely combinatorial and do not rely on the spectral analysis developed in the previous sections. We note, however, that one special instance of Example~\ref{e:fcp} was already addressed in Section~\ref{sec:Hypergraphs} by means of hypergraph techniques (see Example~\ref{e:3uni}).

From now on, we let the assumptions and notation in the statement of Theorem \ref{th:irreducombdim} prevail. 

\subsection{Proof of Part \ref{item:partA} of Theorem \ref{th:irreducombdim}}\label{ss:proofa}

Assume that $\{E_n\}$ has combinatorial dimension $1<\alpha\leq d$. We have to show that it is not possible to find a sequence $\Pi_n \coloneq  \{ B_1,\ldots,B_{m_n} \}$, $n\ge 1$, with $B_i=B_i(n)$, such that each $\Pi_n$ is a partition of $V_n$, and, as 
$n\to \infty$, 
\begin{eqnarray}
 \max_{i=1,...,m_n} \Big| E_n \cap (\underbrace{B_i \times \cdots \times B_i}_\text{$d$ times}) \Big| &=& o ( |E_n|), \label{e:pre1} \\
\sum_{i=1}^{m_n} \Big| E_n \cap (\underbrace{B_i \times \cdots \times B_i}_\text{$d$ times}) \Big| &\asymp& |E_n| \label{e:pre2}\,,
\end{eqnarray}
(note that, if \eqref{e:pre1} and \eqref{e:pre2} are both verified, then, necessarily, $m_n\to\infty$).

\noindent We reason by contradiction, and assume that there exists a sequence of partitions $\{\Pi_n \}$ such that the two properties \eqref{e:pre1} and \eqref{e:pre2} are satisfied. Without loss of generality, we can assume that $|B_1|\geq |B_2|\geq \cdots \geq |B_{m_n}|$, $n\geq 1$. For all $\beta >0$, we set
$$
K_n(\beta) := \max\left\{s\leq m_n : |B_s|\geq \beta |V_n| \right\}, 
$$
with $\max \emptyset :=0$. Since each $\Pi_n$ is a partition of $V_n$ we have that
$$
|V_n| = \sum_{s=1}^{m_n} |B_s|\geq \sum_{s=1}^{K_n(\beta)} |B_s^{(n)}|\geq K_n(\beta) \beta |V_n|,
$$
where we have used the convention $\sum_{s=1}^0 := 0$, thus yielding the bound
\begin{equation}\label{e:kbound}
K_n(\beta) \leq \frac{1}{\beta}, \quad \mbox{for all }\, \beta>0.
\end{equation}
In the sequel, we write $M(\beta) := \lceil \beta^{-1} \rceil +1$, $\beta>0$. We will also use the following estimate, valid for every $n\geq 1$ and every $t\leq m_n$:
\begin{equation}\label{e:sorey}
\sum_{s=t}^{m_n} \big| B_s\big|^{\alpha}\leq \big| B_t\big|^{\alpha-1} \sum_{s=t}^{m_n} \big| B_s\big| \leq \big| B_t\big|^{\alpha-1} \, |V_n|,
\end{equation}
where the first inequality uses the fact that $\alpha\in (1,d]$ and that the sequence $s\mapsto \big|B_s\big|$ is decreasing. Since \eqref{e:pre2} is in order and $\{E_n\}$ has combinatorial dimension $\alpha$, using \eqref{e:alphasymp} for $J_n=E_n$ one infers that there exist a finite constant $\Gamma>0$ and an integer $n_0\geq 1$ such that, for all $n\geq n_0$,
\begin{eqnarray*}
 \Gamma &>& \frac{|V_n|^\alpha}{\sum_{s=1}^{m_n} | E_n \cap (B_s\times \cdots \times B_s)|} \geq  \frac{|V_n|^\alpha}{c\sum_{s=1}^{m_n} \big| B_s\big|^{\alpha}} \geq \frac{|V_n|^{\alpha-1} }{c \big| B_1\big|^{\alpha-1}},
\end{eqnarray*}
where the constant $c$ is the one appearing on the right-hand side of \eqref{e:req1} (choosing $J_n=E_n$), and the last estimate exploits \eqref{e:sorey} in the case $t=1$. This implies that, for $n\geq n_0$, $ \big|B_1\big| \geq b|V_n| $,
where $b := (c\, \Gamma)^{-\frac{1}{\alpha-1}}$.
Now, assumption \eqref{e:pre1} combined once again with \eqref{e:alphasymp} (with $J_n=E_n$) yields that, as $n\to \infty$,
$$
\frac{1}{|V_n|^\alpha} \sum_{s=1}^{M(b) -1 } | E_n \cap (B_s\times \cdots \times B_s)|
\longrightarrow 0. $$
From this we infer that there exists an integer $n_1\geq n_0$ such that, for all $n\geq n_1$
\begin{eqnarray*}
 \Gamma &>& \frac{|V_n|^\alpha}{\sum_{s=M(b)}^{m_n} | E_n \cap (B_s\times \cdots \times B_s)|} \geq  \frac{|V_n|^\alpha}{c\sum_{s=M(b)}^{m_n} \big| B_s\big|^{\alpha}} \geq \frac{|V_n|^{\alpha-1} }{c \big| B_{M(b)}\big|^{\alpha-1}},
\end{eqnarray*}
where the last inequality uses \eqref{e:sorey} in the case $t=M(b)$. These relations imply that, for $n\geq n_1$, one necessarily has $\big| B_{M(b)} | \geq b |V_n|$, which is absurd, since it would yield
$$
\frac1b < M(b) \leq K_n(b) \leq \frac1b,
$$
where we used \eqref{e:kbound}. The proof of Part \ref{item:partA} is concluded.

\subsection{Proof of Part~\ref{item:partB} of Theorem \ref{th:irreducombdim}}\label{ss:proofb}

The following example shows that, at least for $d\geq 3$, one can easily build examples of sequences $\{E_n \}$ that have combinatorial dimensions strictly between $1$ and $d$.

\begin{example}[Fractional Cartesian products]\label{e:fcp}{\rm The following construction is a variation of the definition of \emph{fractional Cartesian products}, as discussed in \cite[Chapters XII-XIII]{bleibook} and \cite{bleijanson, DPKPTRF, NPR10}. Fix $d\geq 3$ and $ b = 1,...,d-1$. For every $n>b$, we define $V_n:= \{ {\bf v} = (v_1,...v_b)\in [n]^b : v_s\neq v_t, \, s\neq t\}$.  In what follows, generic elements $({\bf v}_1,...,{\bf v}_d)$ of the Cartesian product $\underbrace{V_n\times \cdots \times V_n}_\text{$d$ times}$ will be written 
\begin{equation}\label{e:uglyduck}
({\bf v}_1,...,{\bf v}_d) = (v_{(1,1)},v_{(1,2)},...,v_{(1,b)}; v_{(2,1)},...,v_{(2,b)};...;v_{(d,1)},  ..., v_{(d,b)}),
\end{equation}
 where $(v_{(s,1)},v_{(s,2)},...,v_{(s,b)}) = {\bf v}_s \in V_n$, $s=1,...,d$. We now fix a partition $S= \{S_1,...,S_d\}$ of the index set $I:= \{(1,1), (1,2),..., (d,b)\}$ with the following properties: 
 \begin{enumerate}[label=(\alph*)]
     \item \label{i:conn1} $|S_i| = b$, $i=1,...,d$;
     \item \label{i:conn2} for every $\ell = 1,...,d$, each $S_i$ contains at most one index of the form $(\ell, s)$, for $s=1,...,b$.
 \end{enumerate}
 We will also say that the partition $S$ is \emph{connected} if there do not exist partitions $\{B_1, B_2\}$ and $\{C_1,C_2\}$ of $[d]$ such that, for $i=1,2$
 $$
 \{(\ell, s) : \ell\in B_i\} = \bigcup_{j\in C_i} S_j.
 $$ 
 See Figure \ref{fig:rectangles} for examples of such partitions.

\begin{figure}[h]
\centering
\resizebox{0.5\textwidth}{!}{
\begin{tikzpicture}
    \draw (0,0.5) rectangle (2,3.5);
    \node at (1,3.8) {$b$};
    \node at (1,-0.5) {(a)};
    \node at (-0.3,2) {$d$};
    \fill[orange!40] (0,3.5) rectangle (1,2.5) node[midway] {\textcolor{black}{$S_3$}};
    \fill[blue!20] (1,3.5) rectangle (2,1.5) node[midway] {\textcolor{black}{$S_1$}};
    \fill[pink!50] (0,2.5) rectangle (1,0.5) node[midway] {\textcolor{black}{$S_2$}};
    \fill[orange!40] (1,0.5) rectangle (2,1.5) node[midway] {\textcolor{black}{$S_3$}};

    \draw (3.5,0) rectangle (5.5,4);
    \node at (4.5,4.3) {$b$};
    \node at (4.5,-0.5) {(b)};
    \node at (3.2,2) {$d$};
    \fill[orange!40] (3.5,2) rectangle (4.5,4) node[midway] {\textcolor{black}{$S_1$}};
    \fill[blue!20] (4.5,2) rectangle (5.5,4) node[midway] {\textcolor{black}{$S_2$}};
    \fill[pink!50] (3.5,0) rectangle (4.5,2) node[midway] {\textcolor{black}{$S_3$}};
    \fill[green!20] (4.5,0) rectangle (5.5,2) node[midway] {\textcolor{black}{$S_4$}};

    \draw (7,0) rectangle (9,4);
    \node at (8,4.3) {$b$};
    \node at (8,-0.5) {(c)};
    \node at (6.7,2) {$d$};
    \fill[green!20] (7,3) rectangle (8,4) node[midway] {\textcolor{black}{$S_4$}};
    \fill[orange!40] (8,2) rectangle (9,4) node[midway] {\textcolor{black}{$S_1$}};
    \fill[blue!20] (7,1) rectangle (8,3) node[midway] {\textcolor{black}{$S_2$}};
    \fill[pink!50] (8,0) rectangle (9,2) node[midway] {\textcolor{black}{$S_3$}};
    \fill[green!20] (7,0) rectangle (8,1) node[midway] {\textcolor{black}{$S_4$}};
\end{tikzpicture}}
\captionsetup{font=scriptsize}
\caption{ Some examples of partitions $S$ of the index set $I$. Case (a): for $d=3$ and $b=2$ there are only four partitions $S=\{S_1,S_2,S_3\}$ verifying \ref{i:conn1} and \ref{i:conn2} and all are connected; in the example, we have $S_1 = \{(1,2), (2,2)\}$, $S_2 = \{(2,1), (3,1)\}$ and $S_3 = \{(3,2), (1,1)\}$. Case (b): for $d=4$ and $b=2$, an example of disconnected partition $S=\{S_1,S_2,S_3, S_4\}$ verifying \ref{i:conn1} and \ref{i:conn2} is given by $S_1 = \{(1,1), (2,1)\}$, $S_2 = \{(1,2), (2,2)\}$, $S_3 = \{(3,1), (4,1)\}$ and $S_4 = \{(3,2), (4,2)\}$ (for which $B_1 = C_1 = \{1,2\}$ and $B_2 = C_2 = \{3,4\}$). Case (c):  for $d=4$ and $b=2$, an example of a connected partition is given by $S_1 = \{(1,2), (2,2)\}$, $S_2 = \{(2,1), (3,1)\}$, $S_3 = \{(3,2), (4,2)\}$ and $S_4 = \{(1,1), (4,1)\}$.}
\label{fig:rectangles}
\end{figure}
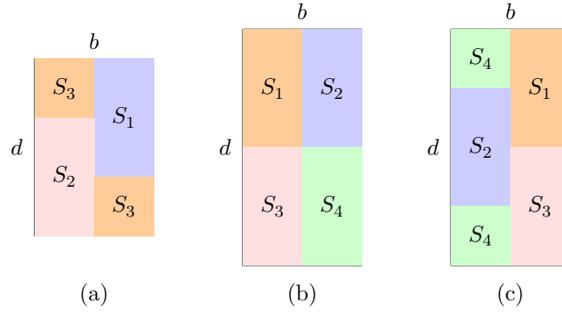

\medskip

\noindent For every $n>d$ we define the set $E_n^0 \subset V_n\times \cdots \times V_n$ as follows:
\begin{equation}\label{e:D0}
E^0_n := \{ ({\bf v}_1,...,{\bf v}_d) : v_{(s, u)} = v_{(t, v)} \mbox{ iff $(s,u), \, (t,v)\in S_i$ for some $i=1,...,d$} \},   
\end{equation}
where we used the notation \eqref{e:uglyduck}. Finally, we define $E_n$ to be the collection of those $({\bf v}_1,...,{\bf v}_d)$ such that $({\bf v}_{\sigma(1)},...,{\bf v}_{\sigma(d)}) \in E^0_n$, for some permutation $\sigma$ of $[d]$. In this way, each $E_n$ automatically satisfies the properties \eqref{item:VE1} and \eqref{item:VE2} (symmetry), introduced at the beginning of Section \ref{ss:prelim}. It is easily seen that a connected partition $S$ with the properties \ref{i:conn1} and \ref{i:conn2} listed above always exists; moreover, standard combinatorial considerations yield that such an $S$ can always be chosen so that each $E_n$ also verifies \eqref{item:VE3} from the beginning of Section \ref{ss:prelim}, that is: $E_n$ is non-diagonal. We claim that the sequence $\{E_n : n>d\}$ has combinatorial dimension $\alpha = \frac{d}{b}$, as per Definition \ref{d:cdm}. To see this, we first observe that, by definition, as $n\to\infty$, one has that $|V_n|\asymp n^b$, and
$$
|E_n^0| \asymp |E_n| \asymp n^{d} \asymp |V_n|^{\frac{d}{b}},
$$
proving that $\eqref{e:req2}$ is satisfied for $J_n = E_n$. On the other hand (e.g. by the triangle inequality), to prove \eqref{e:req2} for $J_n=E_n$ it is sufficient to show that
\begin{equation}\label{e:cdfp}
| E^0_n \cap (A_1\times \cdots \times A_d)| \leq c \, \max_{i=1,...,d} |A_i|^{\frac{d}{b}},
\end{equation}
for some absolute constant $c$. It turns out that such a bound is a direct consequence of a classical estimate by Finner, stated in \cite[Theorem 2.1]{F92} (see also \cite[Proposition 2.8]{BCCT08}). We refer to Appendix \ref{ss:combproof} for a complete proof of \eqref{e:cdfp}.}
\end{example}

\medskip

The following statement shows that the sequence of homogeneous sums associated with the irreducible sequence $\{E_n\}$ verifies a CLT, thus concluding the proof of Part \ref{item:partB} of Theorem \ref{th:irreducombdim}.

\begin{proposition}[Fractional products yield irreducible CLTs] \label{cor:fracprod} For $d\geq 3$, let $\{E_n : n>k\}$ be the sequence of sets constructed in Example {\rm \ref{e:fcp}}. Assume in addition that the underlying partition $S = \{S_1,...,S_k\}$ is connected, and verifies Properties {\rm  \ref{i:conn1}} and {\rm \ref{i:conn2}} in Example {\rm \ref{e:fcp}}. For $n>d$, we consider the sequence $\{Z_n\}$ defined as in \eqref{eq:defpolchaos} for $q_n\equiv 1$. Then, as $n\to\infty$, $\mathbb{E}[Z_n^2] = d! |E_n| \asymp n^d$, and the sequence $\widetilde{Z}_n = \frac{Z_n}{\sqrt{d!|E_n|}}$ converges in distribution to a standard Gaussian random variable $N$. 
\end{proposition}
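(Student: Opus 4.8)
\textbf{Proof proposal for Proposition~\ref{cor:fracprod}.}

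The plan is to invoke the universal de Jong theorem (Theorem~\ref{t:unidejong}) in the form of the contraction criterion recalled in Remark~\ref{r:afterdejong}(1): it suffices to verify that, as $n\to\infty$, the normalized kernels $\tilde q_n$ satisfy
\begin{equation*}
\sum_{v_1,\ldots,v_{2d-2r}\in V_n} \big(\tilde q_n\star_r \tilde q_n\big)(v_1,\ldots,v_{2d-2r})^2 \longrightarrow 0 \quad \text{for all } r=1,\ldots,d-1,
\end{equation*}
where $\tilde q_n = q_n/\sqrt{d!\|q_n\|^2}$ and $q_n\equiv 1$ on $E_n$. Since $\|q_n\|^2 = |E_n| \asymp n^d$ by Example~\ref{e:fcp}, the normalization contributes a factor of order $n^{-2d}$, and the task reduces to showing that the \emph{unnormalized} contraction norms $\sum (\ind_{E_n}\star_r \ind_{E_n})^2$ are of order $o(n^{2d})$ for each $r$. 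This is precisely the type of estimate carried out in \cite{NPR10} for fractional Cartesian products, so the bulk of the argument is a careful bookkeeping of how many $d$-tuples in $E_n$ share a prescribed set of $r$ "contracted" coordinates.

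The key combinatorial step is the following. Fix $r\in\{1,\ldots,d-1\}$. Expanding the square, $\sum_{v_1,\ldots,v_{2d-2r}} (\ind_{E_n}\star_r \ind_{E_n})^2$ counts the number of pairs of tuples in $E_n$ that agree on $r$ designated slots (the "$a$'s"), which in turn is controlled by the number of configurations $({\bf v}_1,\ldots,{\bf v}_d,{\bf v}_1',\ldots,{\bf v}_d')$ with ${\bf v}_i = {\bf v}_i'$ for $i$ in an $r$-subset and both $({\bf v}_1,\ldots,{\bf v}_d)$ and $({\bf v}_1',\ldots,{\bf v}_d')$ in $E_n$. Each element of $E_n$ is a projection of a point of $E_n^0$, which by construction~\eqref{e:D0} is parametrized by exactly $d$ free labels in $[n]$ (one per block $S_i$ of the partition). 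Gluing two such points along $r$ common vector-coordinates forces a number of label identifications; because the partition $S$ is \emph{connected} and satisfies Properties~\ref{i:conn1}–\ref{i:conn2}, any proper "overlap" (i.e.\ $r<d$ common slots) must identify at least one additional label beyond the $2d$ nominally available, so the number of such configurations is $O(n^{2d-1})$. Dividing by the normalization $(d!\,|E_n|)^2 \asymp n^{2d}$ yields a bound of order $n^{-1} = o(1)$, giving the desired convergence. The connectedness of $S$ is exactly what rules out the degenerate situation where $E_n$ splits as a product of two independent lower-dimensional pieces (whose contraction norm would not vanish); this is the analogue of the irreducibility discussion and is why the hypothesis is imposed in the statement.

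The main obstacle I expect is making the "at least one extra label identification" claim fully rigorous and uniform in $r$: one must show that for \emph{every} choice of $r$-subset of slots and every way of matching the two tuples, the connectedness of the block partition $S$ (through its action on the index set $I = \{(1,1),\ldots,(d,b)\}$) propagates equalities across blocks in a way that strictly reduces the count of free parameters. A clean way to organize this is to introduce, for a fixed overlap pattern, the auxiliary bipartite incidence structure between the $2d$ blocks (the $d$ blocks of $S$ for the unprimed tuple and the $d$ for the primed one) and the shared labels, and to argue that connectedness of $S$ forces this incidence graph to be connected whenever the overlap is non-trivial, hence it has at most (number of blocks)$\,-\,1 = 2d-1$ vertices worth of independent labels. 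Once this structural lemma is in place, the power counting $|E_n|\asymp n^d$, $|E_n^0|\asymp n^d$, and the $O(n^{2d-1})$ bound on overlapping configurations assemble routinely. Finally, the variance identity $\mathbb{E}[Z_n^2] = d!\,|E_n| \asymp n^d$ is immediate from \eqref{eq:secmomZn} with $q_n\equiv 1$, and the CLT for $\widetilde Z_n$ then follows from the equivalence (i)$\Leftrightarrow$(ii) in Theorem~\ref{t:unidejong}.
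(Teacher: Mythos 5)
Your route is genuinely different from the paper's: the paper proves the CLT by directly invoking the quantitative de Jong-type estimates of \cite{NPR10b} (Propositions 6.6 and 6.8 there), which give the Berry--Esseen-type bound \eqref{e:gesine} in terms of the pair-set $E_n^\#$ and the degree quantity $\max_{\bf i}|E^*_{n,{\bf i}}|$, whereas you propose to verify the contraction criterion \eqref{e:qn} of Remark~\ref{r:afterdejong} by hand. That alternative is legitimate in principle, and your variance identity and the reduction via Theorem~\ref{t:unidejong} are correctly set up; a self-contained counting proof along these lines can indeed be assembled. The paper's proof is essentially a citation, so your plan, if completed, would be more self-contained but also substantially longer.

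However, as written there is a genuine gap in the central counting step. The quantity $\sum_{v}(\ind_{E_n}\star_r\ind_{E_n})(v)^2$ is \emph{not} the number of pairs of tuples of $E_n$ agreeing on $r$ slots: expanding the square produces a count of \emph{quadruples} $(x,y,x',y')$ of elements of $E_n$, with $x,y$ and $x',y'$ glued along the $r$ contracted slots and $x,x'$ and $y,y'$ glued along the remaining $d-r$ slots. What you bound by $O(n^{2d-1})$ is the pair count, i.e.\ the $L^1$ norm of the contraction, and dividing that by $(d!\,|E_n|)^2\asymp n^{2d}$ does not control $\|\tilde q_n\star_r\tilde q_n\|^2$. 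Worse, your pair bound does not actually use connectedness: a single shared slot already forces $b\ge 1$ label identifications, so the bound $O(n^{2d-1})$ holds for \emph{any} partition satisfying Properties~\ref{i:conn1} and~\ref{i:conn2}; if the argument as stated sufficed, it would yield a CLT also for disconnected $S$, where $Z_n$ is essentially a product of two independent lower-order chaoses and the Gaussian limit fails. The correct bookkeeping is: writing $\beta_A$ (resp.\ $\gamma_{A^c}$) for the number of blocks of $S$ meeting the contracted rows $A$ (resp.\ the complementary rows), the quadruple count is $O\big(n^{3d-\beta_A-\gamma_{A^c}}\big)$, and connectedness is exactly what guarantees $\beta_A+\gamma_{A^c}\ge d+1$, hence $o(n^{2d})$; equivalently, one may combine your pair count $O(n^{2d-\beta_A})$ with a sup-norm bound $\|\ind_{E_n}\star_r\ind_{E_n}\|_\infty=O(n^{d-\gamma_{A^c}})$. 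Your proposed ``bipartite incidence structure between the $2d$ blocks'' would have to be reformulated accordingly (for four tuples, or pairs plus a sup bound), and one must also handle the symmetrization of $E_n^0$, since the contraction mixes permuted copies so the gluing pattern is not always ``rows $A$ against rows $A$'' --- this is precisely the work that the paper outsources to \cite{NPR10b} through the set $E_n^\#$.
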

\begin{proof} To show that $\widetilde{Z}_n$ converges in distribution to $N$, we can directly use \cite[Proof of Proposition 6.6 and Proposition 6.8]{NPR10b} to deduce that, for every thrice differentiable bounded function $h : \R\to \R$ with bounded derivatives,
\begin{equation}\label{e:gesine}
| \mathbb{E}[h(\widetilde{Z}_n) ] - \mathbb{E}[h(N) ] |\leq \frac{C}{|V_n|^{\frac{1}{2b}}}, 
\end{equation}
for some absolute constant $C$, so that the conclusion follows from the fact that $|V_n|\to \infty$.    
\end{proof}

{ \begin{remark}{\rm The estimate \eqref{e:gesine} follows from \cite[formula (6.72)]{NPR10b}, implying that the quantity $| \mathbb{E}[h(\widetilde{Z}_n) ] - \mathbb{E}[h(N) ] |$ is bounded by a multiple of the sum
$$
\frac{|E_n^\#|^{1/2} }{|E_n|} + 
\frac{\max_{{\bf i}\in V_n} |E^*_{n, {\bf i}}|^{1/4} }{|E_n|^{1/4}},
$$
where $E^*_{n, {\bf i}} := \{ ({\bf v}_1,...,{\bf v}_d)\in E_n : {\bf i} = {\bf v}_\ell \mbox{ for one } \ell\in [d]\}$, and $E_n^\# \subset E_n\times E_n$ is defined as the collection of all pairs $(F, G) := (({\bf f}_1,...,{\bf f}_d) , ({\bf g}_1,...,{\bf g}_d)) \in E_n \times E_n$ such that: (a) $F$ and $G$ have no elements in common, and (b) there exists $p\in [d-1]$, as well as distinct integers $\ell_1,.., \ell_p\in [d]$ such that replacing each ${\bf f}_{\ell_i}$ in $F$ with the corresponding ${\bf g}_{\ell_i}$ in $G$ (and vice versa) results in a (possibly different) element of $E_n \times E_n$. The claimed estimate \eqref{e:gesine} then follows from \cite[formula (6.70)]{NPR10b}, implying that, for some absolute constant $C$, one has that $|E_n^\#| \leq C |V_n|^{2\alpha - \frac{1}{b}}$ and $\max_{{\bf i}\in V_n} |E^*_{n, {\bf i}}|\leq C |V_n|^{\alpha-1}$. 
}
\end{remark}}
}
\section{Irreducibility and sparsity: ad hoc construction for $d=2$}\label{sec:d=2}
This section provides the proof of Theorem~\ref{th:irreducibilityk=2}, under the assumptions stated in Subsection~\ref{sss:adhoc}.
In particular, we prove a refined version of Theorem~\ref{th:irreducibilityk=2} (see Theorem~\ref{th:mainth2d} below). We not only show that any partition $B_1, \ldots, B_{m_n}$ of $V_n$ satisfying \ref{redCLTcond1} and \ref{redCLTcond2} cannot satisfy \ref{redCLTcond3} (this already implies irreducibility; see Definition~\ref{def:irreducibility} and Remark~\ref{r:reducible}-\ref{item:point1}), but we also provide a quantitative bound contradicting \ref{redCLTcond3}.
In more detail, we provide a lower bound away from zero for the main contribution to the second moment coming from one of the sets in the partition (see inequality \eqref{eq:contrBox0}), which is in conflict with \ref{redCLTcond3}. 
\medskip

We briefly recall the notation of Section \ref{sss:adhoc}. For $n \in \N$, we set $V_n= [n]^2$, fix $\beta \in (0,1]$ and for $a,b \in \{1,\ldots,n\}$ consider the subsets
$\bbS_{\mathsf{v}}(a)
\subseteq \{a\} \times 
\{1,\ldots, n\}$
and $\bbS_{\mathsf{h}}(b) \subseteq \{1,\ldots, n\} \times \{b\}$
with $|\bbS_{\mathsf{v}}(a)| = |\bbS_{\mathsf{h}}(b)| = \lfloor \beta n \rfloor$.
\smallskip

We recall the equivalence relations $\stackrel{\mathsf{h}}{\sim}$ and $\stackrel{\mathsf{v}}{\sim}$ on $V_n$: for any $v_1,v_2 \in V_n$
\begin{gather}\label{eq:relationH+}
	v_1 \stackrel{\mathsf{h}}{\sim} v_2 \quad  \iff \quad 
 \text{for some $b$ we have } \ v_1,v_2 \in \bbS_{\mathsf{h}}(b) \ \text{ with } \ v_1 \ne v_2 \,, \\
	\label{eq:relationV+}
	v_1 \stackrel{\mathsf{v}}{\sim} v_2  \quad\iff \quad \text{for some $a$ we have } \ v_1,v_2 \in \bbS_{\mathsf{v}}(a) \ \text{ with } \ v_1 \ne v_2 \,,
\end{gather}
and we write $v_1 \sim v_2$ if and only if $ v_1 \stackrel{\mathsf{h}}{\sim} v_2$ or $
    v_1 \stackrel{\mathsf{v}}{\sim} v_2$.
We set $E_n \coloneq \big\lbrace (v_1,v_2) \in V_n \times V_n : v_1 \sim v_2 \big\rbrace$
and we consider the homogeneous sum
\begin{equation}
    \label{eq:polch6}
    Z_n= \sum_{v_1,v_2 \in V_n} \ind_{E_n}(v_1,v_2) \, X_{v_1}X_{v_2} \,,
\end{equation}
where $\{X_v\}_{v \in \bigcup_n V_n}$ is a family of i.i.d. standard Gaussian random variables. We recall that $Z_n$ is centered with second moment 
\begin{equation*}
  \mathbb{E}[Z_n^2]= 2|E_n|=
  2 \bigg\{ \sum_{b=1}^n |\bbS_{\mathsf{h}}(b)|
  \, (|\bbS_{\mathsf{h}}(b)|-1) +
  \sum_{a=1}^n |\bbS_{\mathsf{v}}(a)|
  \, (|\bbS_{\mathsf{v}}(a)|-1) \bigg\}
  \underset{n\to\infty}{\sim} 
  4 \beta^2 n^3\,.
\end{equation*}
    
\smallskip

The following result establishes irreducibility for the sequence $\{Z_n\}$.

\begin{theorem}[Irreducibility and quantitative bounds]\label{th:mainth2d}
Fix any $\beta \in (\frac{1}{2},1]$. Then
\begin{enumerate}
\renewcommand{\labelenumi}{\upshape(\arabic{enumi})}
    \item\label{item:boxes} Any partition $\{B_\ell = B_\ell(n,m_n)\}_{ \ell=1,\ldots,m_n}$, which satisfies as $n\to\infty$
\begin{equation}
	\label{eq:condmainth0}
	\sum_{\ell=1}^{m_n} \sigma_n^2(B_\ell) \ = \ ({4}\beta^2 + o(1)) \, n^3 
\end{equation}
must also satisfy
\begin{equation}\label{eq:contrBox0}
	\max_{\ell=1,\ldots, m_n}
	\sigma_n^2(B_{\ell})  \ \ge \ \big( {2} \beta^2(2\beta-1) + o(1) \big) \, {n^3} \,.
	\end{equation}
    \item\label{item:irredCLT}
    {As a consequence}, $\{Z_n\}$ satisfies an irreducible CLT in the sense of Definition \ref{def:irreducibility}.
\end{enumerate}
\end{theorem}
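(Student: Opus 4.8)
\textbf{Proof plan for Theorem~\ref{th:mainth2d}.}
The core of the argument is part~(1): a quantitative lower bound on the maximal box‑variance, from which part~(2) follows immediately, since \eqref{eq:contrBox0} with $\beta>\tfrac12$ shows that condition \ref{redCLTcond3} of Definition~\ref{def:irreducibility} fails for any partition obeying \ref{redCLTcond1}--\ref{redCLTcond2}, hence $\{Z_n\}$ is irreducible; combined with the CLT \eqref{e:normadue}, which holds here because the maximal degree of $G_n$ is $2(\lfloor\beta n\rfloor-1)=O(n)=o(|E_n|^{1/2})=o(n^{3/2})$ (recall Remark~\ref{r:afterdejong}-\ref{item:rempoint2} and \eqref{e:maxdd}), this gives an irreducible CLT. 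So the plan is to concentrate on \eqref{eq:contrBox0}.

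First I would reformulate the box contribution combinatorially. Using $w_n\equiv 1$ and \eqref{eq:sigmaBdef2}, $\sigma_n^2(B)=2\,|E_n\cap(B\times B)|$, and by \eqref{eq:E_ngriglia} this counts ordered pairs $(v_1,v_2)\in B\times B$ with $v_1\sim v_2$, i.e. pairs lying in a common $\bbS_{\mathsf h}(b)$ or a common $\bbS_{\mathsf v}(a)$. Writing $h_b(B):=|B\cap \bbS_{\mathsf h}(b)|$ and $v_a(B):=|B\cap\bbS_{\mathsf v}(a)|$, we get
\[
\sigma_n^2(B)\;=\;2\sum_{b=1}^n h_b(B)\big(h_b(B)-1\big)\;+\;2\sum_{a=1}^n v_a(B)\big(v_a(B)-1\big).
\]
Now the key structural observation: for a single point $v=(i,j)\in V_n$, the only horizontal line containing $v$ inside an $\bbS_{\mathsf h}$ is $\bbS_{\mathsf h}(j)$ (when $v\in\bbS_{\mathsf h}(j)$), and similarly $\bbS_{\mathsf v}(i)$ vertically. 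Consequently each point contributes to at most one horizontal term and one vertical term. Summing $\sigma_n^2(B_\ell)$ over $\ell$ and using that $\{B_\ell\}$ partitions $V_n$, together with convexity of $t\mapsto t(t-1)$, I expect to control $\sum_\ell \sigma_n^2(B_\ell)$ from above by an expression in the ``column sizes'' $h_b(B_\ell), v_a(B_\ell)$, while the hypothesis \eqref{eq:condmainth0} forces this sum to be $\sim 4\beta^2 n^3$, i.e.\ essentially maximal (since the total is $2|E_n|\sim 4\beta^2 n^3$). The point is that near‑maximality of the sum forces the partition to be very coarse on each line: for most lines $\bbS_{\mathsf h}(b)$ (resp.\ $\bbS_{\mathsf v}(a)$), one box $B_{\ell}$ must contain a large fraction — at least $(2\beta-1+o(1))$ of the $\lfloor\beta n\rfloor$ points, because two distinct boxes splitting a line of length $\lfloor\beta n\rfloor$ into pieces of sizes $s$ and $\lfloor\beta n\rfloor - s$ would lose $\gtrsim 2s(\lfloor\beta n\rfloor-s)$ pairs, and losing this on a positive fraction of lines is incompatible with \eqref{eq:condmainth0}.

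The heart of the argument — and the main obstacle — is then a pigeonhole/counting step showing that one single box must be large along \emph{many} lines simultaneously, and hence must contain $\gtrsim \beta^2(2\beta-1) n^2$ points arranged so that its own within‑line pair counts are $\gtrsim 2\beta^2(2\beta-1)n^3$. Here the condition $\beta>\tfrac12$ is essential: the ``majority line'' $\bbS_{\mathsf h}(b)$ and the ``majority line'' $\bbS_{\mathsf v}(a)$ of a given box must \emph{intersect} (two sets of size $>\tfrac12 n$ in $\{1,\dots,n\}$ always meet), so a box that is a $(2\beta-1)$-majority on many horizontal lines is forced, via these intersections, to be a near‑majority on many vertical lines too and vice versa; a Cauchy–Schwarz / double‑counting argument on the bipartite incidence between boxes and lines then pins down one box $B_{\ell^*}$ with $\sum_b h_b(B_{\ell^*})^2 + \sum_a v_a(B_{\ell^*})^2 \ge (2\beta^2(2\beta-1)+o(1))n^3$, which after subtracting the linear ``$-h$'' and ``$-v$'' terms (of order $n^2$) yields \eqref{eq:contrBox0}. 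I would carry out this step carefully: track the $o(1)$ error through the convexity estimates, handle the ``bad'' lines where no box is a clear majority (these must be a vanishing fraction by \eqref{eq:condmainth0}), and use the $\beta>\tfrac12$ intersection property to transfer majority‑status between the horizontal and vertical families. The rest — deducing part~(2) and the CLT — is then immediate from the definitions and the already‑cited fourth‑moment / degree criterion.
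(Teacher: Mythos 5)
Your overall architecture matches the paper's: decompose $\sigma_n^2(B)$ along horizontal and vertical lines, use the near-maximality hypothesis \eqref{eq:condmainth0} to force, on most lines, a single box containing almost the whole line, and then argue that one box must dominate a positive fraction of lines. Part~(2) is handled correctly, and in fact more economically than in the paper: instead of the explicit fourth-moment pairing computation, you invoke the maximal-degree criterion \eqref{e:maxdd} (each vertex lies in at most one $\bbS_{\mathsf{h}}(b)$ and one $\bbS_{\mathsf{v}}(a)$, so $\Delta(n)\le 2(\lfloor\beta n\rfloor-1)=O(n)=o(|E_n|^{1/2})$), which is legitimate by Remark~\ref{r:afterdejong}-\ref{item:rempoint2}.

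For part~(1), however, there is a genuine gap exactly at the step you yourself call the main obstacle: nothing in your sketch proves that a \emph{single} box is dominant on $\gtrsim(2\beta-1)\,n$ horizontal lines. The mechanism you propose --- that the ``majority line'' $\bbS_{\mathsf{h}}(b)$ and the ``majority line'' $\bbS_{\mathsf{v}}(a)$ of a given box must intersect because two subsets of $\{1,\ldots,n\}$ of size $>\tfrac{n}{2}$ always meet --- fails as stated: $\bbS_{\mathsf{h}}(b)$ and $\bbS_{\mathsf{v}}(a)$ are subsets of the grid whose intersection is contained in the single point $(a,b)$, which need not belong to either of them (each occupies only $\lfloor\beta n\rfloor$ of the $n$ available positions), and even when it does, membership of one point does not identify which box dominates which line. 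The row-to-column transfer must be an averaging argument over many lines simultaneously; this is precisely the paper's Step~2: form the union $D$ of the row-dominant pieces $B_{\ell_b}(\cdot,b)\cap\bbS_{\mathsf{h}}(b)$, $b\in I$, use Lemma~\ref{th:comb} to produce many columns $a$ with $|D(a,\cdot)|>(\beta-5\sqrt{\eta})\,n$, and note that for such a column in $I'$ the column-dominant box has $\ge(\beta-2\sqrt{\eta})\,n$ points on that column of which at most $(1-\beta+5\sqrt{\eta})\,n$ can avoid $D$; the margin $\beta-(1-\beta)=2\beta-1>0$ then forces that box to coincide with the row-dominant colour on $\ge(2\beta-1-O(\sqrt{\eta}))\,n$ rows. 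This is where $\beta>\tfrac{1}{2}$ genuinely enters, and your ``Cauchy--Schwarz / double counting on the box--line incidence'' is only a placeholder for it. Your quantitative targets are also off: near-maximality forces the dominant box on a good line to contain $(\beta-o(1))\,n$ points (essentially all of $\bbS_{\mathsf{h}}(b)$), not a $(2\beta-1)$ fraction, and the intermediate bound $\sum_b h_b(B_{\ell^*})^2+\sum_a v_a(B_{\ell^*})^2\ge(2\beta^2(2\beta-1)+o(1))\,n^3$ for one box would require the row-dominant and column-dominant boxes to coincide, which (as the paper observes at the end of Section~\ref{sec:d=2}) is only guaranteed for $\beta>\tfrac{1+\sqrt{5}}{4}$; the correct accounting reaches \eqref{eq:contrBox0} from the horizontal contribution alone, namely $\ge(2\beta-1-o(1))\,n$ dominated rows, each contributing about $2\beta^2 n^2$ ordered pairs.
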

Let us prove point \eqref{item:irredCLT}, while the (longer) proof of the quantitative bound is presented separately in Subsection~\ref{subsec:proofpunto1}.

\begin{proof}[Proof of Theorem \ref{th:mainth2d}-\eqref{item:irredCLT}] 
Assuming \eqref{item:boxes} in Theorem \ref{th:mainth2d}, the conditions \ref{redCLTcond1}, \ref{redCLTcond2} and \ref{redCLTcond3} in Definition \ref{def:irreducibility} cannot hold together, thus $\{Z_n\}$ is irreducible. We only need to prove the convergence in distribution towards $N \sim \mathcal{N}(0,1)$ by applying the Fourth Moment Theorem (see \cite[Proposition 1.6]{NPR10} and \cite[Theorem 4.2]{CSZ17b}). Recall $\widetilde{Z}_n \coloneq Z_n/\sqrt{2|E_n|}$.
Since $\lim_{n\to\infty} \mathbb{E} \big[\widetilde{Z}_n^2\big] =1$, we only need to prove that $\lim_{n \to \infty} \mathbb{E}[\widetilde{Z}_n^4]=3$. We have
\begin{align}
    \mathbb{E}[\widetilde{Z}_n^4] &= \frac{1}{4|E_n|^2} \sum_{v_1,\ldots,v_8\in V_n} \ind_{E_n}(v_1,v_2)\ind_{E_n}(v_3,v_4)\ind_{E_n}(v_5,v_6)\ind_{E_n}(v_7,v_8) \, \mathbb{E} \bigg[ \prod_{i=1}^8 X_{v_i}\bigg] \notag\\
    & =\frac{4}{|E_n|^2} \sum_{A,B,C,D\subseteq V_n} q_n(A)q_n(B)q_n(C)q_n(D) \, \mathbb{E} \big[ X_AX_BX_CX_D \big]\,, \label{eq:momq}
\end{align}
where for convenience we rearranged the sum over (\emph{unordered}) subsets $A=\{v_1,v_2\}$, $B=\{v_3,v_4\}$, $C=\{v_5,v_6\}$, $D=\{v_7,v_8\}$ (recall that each vertex $v_i$ is an \emph{ordered} couple in the square $[n]^2$) and for each of these subsets, say $A \subseteq V_n$, we set $q_n(A)\coloneq \ind_{E_n}(v_1,v_2)$ and $X_A \coloneq X_{v_1}X_{v_2}$.
 
Recall that the $X_v$'s are centered and independent and that all pairs $A,B,C,D \subseteq V_n$ contain two \emph{distinct} vertices (the diagonal points of $V_n \times V_n$ are not in $E_n$). Therefore, the only non--zero contribution to $\mathbb{E} [X_A\, X_B\, X_C \,X_D]=\mathbb{E}[X_{v_1}X_{v_2}X_{v_3}X_{v_4}X_{v_5}X_{v_6}X_{v_7}X_{v_8}]$ is given by those terms where the individual $X_v$'s match either in pairs or in quadruples. We outline the possible cases below, recalling that $2|E_n| \sim 4 \beta^2 n^3$ as $n\to\infty$.
\begin{enumerate}
	\item The $X_v$'s match in quadruples, i.e.\ $A=B=C=D$, and the contribution to \eqref{eq:momq} is negligible as $n\to\infty$:
\begin{equation*}
\begin{split}
    4|E_n|^{-2}\sum_{A \subseteq V_n} q_n(A)^4 \mathbb{E}[X_A^4]&=4|E_n|^{-2}\sum_{\{v_1,v_2\} \subseteq V_n}\ind_{E_n}(v_1,v_2) \mathbb{E}[X_{v_1}^4X_{v_2}^4]\\
    &=18|E_n|^{-1} = O\big(n^{-3}\big)\,.
\end{split}
\end{equation*}

\item The $X_v$'s match in pairs, however $A,B,C,D\subseteq V_n$ do not pair up two by two, for instance when $A=\{v_1,v_2\}$, $B=\{v_1,v_3\}$, $C=\{v_3,v_4\}$ and $D=\{v_2,v_4\}$. It is simple to see that their contribution to \eqref{eq:momq} is always either $O\big(n^{-1}\big)$ or $O\big(n^{-2}\big)$. To give a brief idea, let us consider the term when $\sim$ is always $\stackrel{\mathsf{v}}{\sim}$, thus the indicator function of $E_n$ imposes that $v_1=(a_1,b_1)$, $v_2=(a_1,b_2)$, $v_3=(a_1,b_3)$ and $v_4=(a_1,b_4)$. The corresponding term is then negligible for large $n$, indeed
\begin{equation*}
\begin{split}
    &4|E_n|^{-2}\sum_{v_1,\ldots,v_4 \in V_n} \ind_{E_n}(v_1,v_2)\ind_{E_n}(v_1,v_3)\ind_{E_n}(v_3,v_4)\ind_{E_n}(v_2,v_4) \le O\big(n^{-1}\big)\,,
\end{split}
\end{equation*}
since the sum above can be bounded by the sum over the five degrees of freedom $a_1,b_1,b_2,b_3,b_4 \in \{1,\ldots,n\}$. The other cases work similarly.
\smallskip

\item  The $X_v$'s match in pairs and $A,B,C,D\subseteq V_n$ pair up two by two, i.e.\ either $A=B$ and $C=D$, or $A=C$ and $B=D$, or $A=D$ and $C=B$. If two distinct couples $A,B \subset V_n$ differ from each other but have a common element $v\in V_n$, the corresponding term can be treated similarly as in the previous case, thus giving a negligible contribution. Therefore, consider the case where all distinct couples $A,B\subseteq V_n$ are also disjoint: this is the only non--negligible contribution and gives exactly
\begin{equation*}
\begin{split}
3\Bigg(4|E_n|^{-2}\sum_{\substack{A,B\subseteq V_n\\ A \cap B=\emptyset}}  q(A)^2 \, q(B)^2 \mathbb{E}\big[X_A^2 X_B^2\big] \Bigg) &=3 \, {\big(4|E_n|^{-2}\big)} \Bigg( \sum_{\{v_1,v_2\}\subseteq V_n}  \ind_{E_n}(v_1,v_2) \Bigg)^2 =3\,.
\end{split}
\end{equation*}
\end{enumerate}
\end{proof}

\subsection{Proof of Theorem \ref{th:mainth2d}-\eqref{item:boxes}} \label{subsec:proofpunto1} We divide the proof into three steps, which we first outline below.
\subsubsection{Strategy of the proof}
\label{sec:strategy}
We assume that \eqref{eq:condmainth0} holds. We fix $\eta \in (0,1)$
small
and we take $n$ large enough so that (the factor $\frac{1}{2}$ is for later convenience)
\begin{equation}	\label{eq:condmainth}
	\sum_{\ell=1}^{m_n} \sigma_n^2(B_\ell) \ge ({4}\beta^2 -\tfrac{1}{2}\,\eta) \, n^3 \,.
\end{equation}
We are going to show that there is $\bar{\ell} = \bar\ell_{n,\eta} \in \{1,\ldots,m_n\}$
such that
\begin{equation}\label{eq:contrBox}
	\sigma_n^2(B_{\bar{\ell}})  \ge  2 \big( \beta^2 (2\beta-1) - {11}\sqrt{\eta} + O(\tfrac{1}{n}) \big) \, {n^3} \,.
\end{equation}
(The factor $11$ multiplyng $\sqrt{\eta}$ is immaterial, but it will be transparent to carry out explicit computations.)
Since we can take $\eta > 0$ as small as we wish, this proves \eqref{eq:contrBox0}.

It remains to prove \eqref{eq:contrBox}.
Given a subset $A \subseteq V_n$, we denote its ``rows'' by
	\begin{equation}
		\label{eq:Brow}
		A(\cdot,b) \coloneq A \cap \big( \{1,\ldots,n\} \times \{b\} \big)
		\qquad \text{for} \quad b \in \{1,\ldots,n\}\,,
	\end{equation}
and similarly we denote its ``columns'' by
	\begin{equation}
		\label{eq:Bcolumn}
		A(a, \cdot) \coloneq A \cap \big( \{a\} \times \{1,\ldots,n\}  \big)
		\qquad \text{for} \quad a \in \{1,\ldots,n\}\,.
	\end{equation}
To help explanations, we refer to the labels $\ell \in \{1,\ldots, m_n\}$
of the partition $\{B_\ell\}$ as \emph{colors}.
We are going to prove the following three steps.

\begin{enumerate}
\item \emph{Almost each row and column has a ``dominant'' color},
almost filling up
$\bbS_h(b)$ or $\bbS_v(a)$:
\begin{gather}
	\notag
	\exists I,I' \subseteq \{1,\ldots, n\} \quad \text{such that} \\
	\label{eq:elleb}
		|I| \ge \big(1- {\tfrac{1}{4}}\sqrt{\eta}\big)\,n \quad\ \text{and} \quad\
		\forall b \in I \ \ \exists \ell_b \colon
		\quad 
		| B_{\ell_b}(\cdot,b)\cap \bbS_{\mathsf{h}}(b)| \ge 
		(\beta-2\sqrt{\eta})\, n \,,
	\\
	\label{eq:ellea'}
		|I'| \ge \big(1- {\tfrac{1}{4}}\sqrt{\eta}\big)\,n \quad\ \text{and} \quad\
	\forall a \in I' \ \, \exists \ell_a' \colon
	\quad | B_{\ell_a'}(a,\cdot)\cap \bbS_{\mathsf{v}}(a)| \ge 
	(\beta-2\sqrt{\eta}) \, n \,.
\end{gather}

\item \emph{There is a color $\bar\ell$ which is dominant for a
positive fraction of rows}:
\begin{equation}\label{eq:row-dominant}
	\exists \bar\ell \colon 
	\quad
	| \{ b \in I \colon \ \ell_b = \bar\ell\, \} |  \ge 
	(\beta'-7\sqrt{\eta}) \, n
 \qquad \text{where} \quad 
 \beta' := 2\beta-1 > 0\,.
\end{equation}

\item \emph{Color $\bar\ell$ fulfils relation \eqref{eq:contrBox}.}
\end{enumerate}

\subsubsection{Step 1} 
By symmetry, we only prove \eqref{eq:elleb}. We argue in three parts.

\medskip
\noindent
\emph{Part A.}
Recalling \eqref{eq:sigmaBdef}, for $B \subseteq V_n$
we define $\sigma_{n,h}^2(B)$ and $\sigma_{n,v}^2(B)$ by
\begin{equation} \label{eq:sigma2hv}
	\sigma_n^2(B) = \underbrace{2\sum_{v_1,v_2 \in B} 
	\ind_{v_1 \overset{\mathsf{h}}{\sim} v_2}}_{\sigma_{n,\mathsf{h}}^2(B)}
	\ + \ \underbrace{2\sum_{v_1,v_2 \in  B} 
	\ind_{v_1 \overset{\mathsf{v}}{\sim} v_2}}_{\sigma_{n,\mathsf{v}}^2(B)} \,.
\end{equation}
We prove in this part that
\begin{equation}
	\label{eq:step1h}
	\sum_{\ell=1}^{m_n} \sigma_{n,\mathsf{h}}^2(B_\ell) \ge ({2}\beta^2- \tfrac{1}{2}\eta) \, n^3 \,,
\end{equation}

Note that
by \eqref{eq:relationH+}
and \eqref{eq:relationV+}
we can write for $B \subseteq V_n$:
\begin{equation} \label{eq:sigmah}
	\sigma_{n,\mathsf{h}}^2(B) = {2}\sum_{b=1}^n |B(\cdot,b) \cap \bbS_{\mathsf{h}}(b)| \, \big(|B(\cdot,b) \cap \bbS_{\mathsf{h}}(b)| -1 \big) 
\end{equation}
and
\begin{equation*}
    \sigma_{n,\mathsf{v}}^2(B) = {2}\sum_{a=1}^n |B(a,\cdot) \cap \bbS_{\mathsf{v}}(a)| \, \big( |B(a,\cdot) \cap \bbS_{\mathsf{v}}(a)| -1 \big)\,,
\end{equation*}
in particular
\begin{equation} \label{eq:Theta}
	\sum_{\ell=1}^{m_n} \sigma_{n,\mathsf{h}}^2(B_\ell) = \sum_{b=1}^n \Theta_{n,h}(b)
	\quad \text{where} \quad
	\Theta_{n,\mathsf{h}}(b) \coloneq {2}\sum_{\ell=1}^{m_n} |B_\ell(\cdot,b) \cap \bbS_{\mathsf{h}}(b)| \, \big(|B_\ell(\cdot,b) \cap \bbS_{\mathsf{h}}(b)| -1 \big) .
\end{equation}
Since $(B_\ell(\cdot,b))_{\ell = 1, \ldots, m_n}$ is a partition of $\{1,\ldots, n\} \times \{b\}$,
we can bound
\begin{equation}\label{eq:boundtheta}
	\Theta_{n,\mathsf{h}}(b) = {2}\sum_{\ell=1}^{m_n} 
	\sum_{v_1 \ne v_2 \in B_\ell(\cdot, b) \cap \bbS_{\mathsf{h}}(b)} 1
	\le {2}\sum_{v_1 \ne v_2 \in  \bbS_{\mathsf{h}}(b)}
	1 = {2}\lfloor \beta n \rfloor\big(\lfloor \beta n \rfloor -1\big)  \le
 {2\beta^2 \,n^2}\,,
\end{equation}
which yields $\sum_{\ell=1}^{m_n} \sigma_{n,\mathsf{h}}^2(B_\ell)  \le {2}\sum_{b=1}^n {\beta^2 \,n^2}
\le {2\beta^2 \,n^3}$. The same arguments apply to $\sigma_{n,\mathsf{v}}^2$, hence
\begin{equation} \label{eq:bosu}
	\sum_{\ell=1}^{m_n} \sigma_{n,\mathsf{v}}^2(B_\ell) 
	\le {2\beta^2 \,n^3}\,.
\end{equation}
Finally, relation \eqref{eq:step1h} follows by
$\sigma_{n,\mathsf{h}}^2(B_\ell) = \sigma_{n}^2(B_\ell) - \sigma_{n,\mathsf{v}}^2(B_\ell)$ applying
\eqref{eq:condmainth} and \eqref{eq:bosu}.

\medskip
\noindent
\emph{Part B.}
We now show that $\Theta_{n,\mathsf{h}}(b)$ from \eqref{eq:Theta}
is close to its maximum ${2 \beta^2\,n^2}$ for most values of $b$:
more precisely, setting
\begin{equation} \label{eq:defI}
	I \coloneq \bigg{\lbrace} \, b \in \{1,\ldots,n\} :  
	\ \Theta_{n,\mathsf{h}}(b) > {2}(\beta^2 -\sqrt{\eta}) \, {n^2} \, \bigg{\rbrace}\,,
\end{equation}
we show that
\begin{equation} \label{eq:boundI}
	|I| \ge \big(1- {\tfrac{1}{4}}\sqrt{\eta}\big) \,n\,.
\end{equation}

To this purpose, we can write
\begin{equation*}
\begin{split}
	\sum_{\ell=1}^{m_n} \sigma_{n,\mathsf{h}}^2(B_\ell) 
	= \sum_{b \in I} \Theta_{n,\mathsf{h}}(b)  + \sum_{b \in I^c} \Theta_{n,\mathsf{h}}(b)
	& \le  |I| \, {2\beta^2
 \,n^2} + (n-|I|) \, {2}(\beta^2 -\sqrt{\eta}) \, {n^2} \,,
\end{split}
\end{equation*}
which can be rewritten as
\begin{equation*}
\begin{split}
	 \sqrt{\eta} \, {2n^2} \, |I|
	\ge \sum_{\ell=1}^{m_n} \sigma_{n,h}^2(B_\ell) - {2}(\beta^2 -\sqrt{\eta}) \,{ n^3 }\,.
\end{split}
\end{equation*}
Plugging in the bound \eqref{eq:step1h}, we obtain
precisely \eqref{eq:boundI}.

\medskip
\noindent
\emph{Part C.}
Given $b \in I$,
see \eqref{eq:defI}, let
$\ell_b$ be a color for which $| B_\ell(\cdot,b)\cap \bbS_{\mathsf{h}}(b)| $ is maximal:
\begin{equation*}
	\ell_b \coloneq \arg \max \big{\lbrace} | B_\ell(\cdot,b)\cap \bbS_{\mathsf{h}}(b)| 
	\colon\ 1 \le \ell \le m_n \big{\rbrace}\,.
\end{equation*}
We  show that
\begin{equation}\label{eq:bBbound}
	\forall b \in I \colon \qquad |B_{\ell_b}(\cdot,b)\cap \bbS_{\mathsf{h}}(b)| \ge 
 (\beta -2\sqrt{\eta}) \, n \,,
\end{equation}
which completes the proof of \eqref{eq:elleb}.

To prove \eqref{eq:bBbound}, we recall from \eqref{eq:Theta} that,
setting $k_\ell \coloneq | B_\ell(\cdot,b)\cap \bbS_{\mathsf{h}}(b)|$,
we have
\begin{equation} \label{eq:Thetakappa}
\begin{split}
	\Theta_{n,\mathsf{h}}(b) = {2}\sum_{\ell=1}^{m_n} { k_\ell (k_\ell -1)}
	\qquad 
 \text{with} \quad
	k_\ell \ge 0 \quad
  \text{such that} \quad
	\sum_{\ell=1}^{m_n} k_\ell =
 | \bbS_{\mathsf{h}}(b)| = \lfloor \beta \, n \rfloor \,.
\end{split}
\end{equation}
The function $(k_1,\ldots,k_{m}) \mapsto {2}\sum_{\ell=1}^{m} { k_\ell (k_\ell -1)}$ 
with the above constraints is maximised for $m=1$ and $k_1 = \lfloor \beta \, n \rfloor$, where it takes
the value~${2 \lfloor \beta \, n \rfloor (\lfloor \beta \, n \rfloor-1)} \sim {2\beta^2\,n^2}$.
Since $\Theta_{n,\mathsf{h}}(b)$ is close to ${2 \beta^2\,n^2}$ for $b\in I$,
see \eqref{eq:defI},
this explains intuitively why $k_{\ell_b} = |B_{\ell_b}(\cdot,b)\cap \bbS_{\mathsf{h}}(b)| = \max_{\ell=1,\ldots,m_n} k_\ell$ 
should be close to~$\beta \, n$,
so that \eqref{eq:bBbound} holds.
To make this precise, we simply bound
\begin{equation} \label{eq:Thetafin}
\begin{split}
	\Theta_{n,\mathsf{h}}(b)	&= {2 k_{\ell_b} (k_{\ell_b} -1)} + 
	{2}\sum_{\ell \ne \ell_b} { k_\ell (k_\ell -1)}
	\\
    &\le {2 k_{\ell_b}^2} + {2 k_{\ell_b}}\, 
	\sum_{\ell \ne \ell_b}  k_\ell
	= {2 k_{\ell_b}^2} + {2 k_{\ell_b}}\, (
 \lfloor \beta \, n \rfloor -k_{\ell_b})
	\le {2 k_{\ell_b}} \, 
  \beta \, n \,.
	\end{split}
\end{equation}
Since $\Theta_{n,\mathsf{h}}(b) \ge {2}(\beta-\sqrt{\eta}) \, {n^2}$ for $b \in I$ and $\beta > \frac{1}{2}$,
we see that $k_{\ell_b}$ fulfils \eqref{eq:bBbound}.\qed

\subsubsection{Step~2}

We prove \eqref{eq:row-dominant} by contradiction: we assume that
\begin{equation}\label{eq:contra}
	\forall\ell \colon 
	\quad
	| \{ b \in I \colon \ \ell_b = \ell\, \} |  <
	(\beta'-7\sqrt{\eta}) \, n \qquad \text{with} \quad\beta' := 2\beta-1 \,,
\end{equation}
and we deduce a contradiction with \eqref{eq:ellea'}, namely
\begin{equation} \label{eq:contra-fin}
	\exists a \in I'\ \ 
 \forall\ell \colon \quad |B_\ell(a,\cdot) | <  (\beta-2\sqrt{\eta}) \, n \,.
\end{equation}

Recalling \eqref{eq:elleb},
let $D$ be the union of all
rows of boxes with dominant colors:
\begin{equation*}
	D := \bigcup_{b \in I}
 \big\{ B_{\ell_b}(\cdot, b) \cap \bbS_{\mathsf{h}}(b) \big\} \,.
\end{equation*}
By assumption \eqref{eq:contra},
each color $\ell$ appears in less
than $(\beta' - {7}\sqrt{\eta})\, n$
rows of $D$, therefore
\begin{equation} \label{eq:contra1}
	\forall a \ \ 
 \forall \ell \colon \quad
	| B_\ell(a,\cdot) \cap D | < (\beta'-7\sqrt{\eta}) \, n \,.
\end{equation}
We claim that we can obtain the following bound for a suitable $a \in I'$:
\begin{equation} \label{eq:claim-step-2}
    \exists a \in I' \colon \ \ 
	\forall \ell \colon \quad
	|B_\ell(a,\cdot) \cap D^c| 
 \le (1-\beta + 5\sqrt{\eta}) \, n \,.
\end{equation}
Summing \eqref{eq:contra1} and \eqref{eq:claim-step-2} we obtain \eqref{eq:contra-fin} (note that $\beta' + (1 - \beta) = \beta$).

\smallskip

It only remains to prove \eqref{eq:claim-step-2}.
We observe that by \eqref{eq:elleb}
\begin{equation} \label{eq:carD}
	|D| \ge |I| \, (\beta -2\sqrt{\eta}) \, n
	\ge {\big(1-\tfrac{1}{4}\sqrt{\eta}\big)}
 \, (\beta - 2\sqrt{\eta}) \, n^2 \ge (\beta -3\sqrt{\eta}) \, n^2 \,.
\end{equation}
We then define
\begin{equation} \label{eq:defJ}
	J := \{a \in \{1,\ldots, n\} \colon \ |D(a,\cdot)| > (\beta -5 \sqrt{\eta}) \, n\}
\end{equation}
so that, by Lemma~\ref{th:comb}
below, we have
\begin{equation} \label{eq:Jbound}
	|J| \ge 2\sqrt{\eta} \, n \,.
\end{equation}
Since ${|I'| \ge \big(1-\tfrac{1}{4}\sqrt{\eta}\big)n> (1-\sqrt{\eta})\,n}$,
see \eqref{eq:ellea'},
we have $|I'|+|J| > n$ and therefore $I'\cap J \ne \emptyset$.
We then select any $a \in I' \cap J$
and note that
$|B_\ell(a,\cdot) \cap D^c|
\le |D(a,\cdot)^c|
= n - |D(a,\cdot)|$,
hence the bound
\eqref{eq:claim-step-2} follows
by the definition \eqref{eq:defJ}
of~$J$.\qed

\medskip

It remains to prove the
following elementary lemma
(recall that $V_n = \{1,\ldots,n\}^2$).

\begin{lemma} \label{th:comb}
If $C \subseteq V_n$ satisfies
$|C| \ge u \, n^2$ for some $u \in (0,1)$, then for any $u' < u$
\begin{gather*}
	\big| \big\{ a \in \{1,\ldots,n\} \colon\ |C(a,\cdot)| > u' \, n \big\} \big| \ge 
	\frac{u-u'}{1-u'}
\, n \ge (u-u') \, n \,.
\end{gather*}
\end{lemma}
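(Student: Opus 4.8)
The plan is to prove Lemma~\ref{th:comb} by a simple counting/averaging argument on the rows of $C$. Write $C(a,\cdot) = C \cap (\{a\}\times\{1,\ldots,n\})$ for the $a$-th row, so $|C(a,\cdot)| \le n$ for every $a$, and $\sum_{a=1}^n |C(a,\cdot)| = |C| \ge u\,n^2$. Let $G := \{a \in \{1,\ldots,n\} : |C(a,\cdot)| > u'\,n\}$ be the set of ``good'' rows whose cardinality we want to bound from below, and let $g := |G|$.

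First I would split the sum $|C| = \sum_{a} |C(a,\cdot)|$ according to whether $a \in G$ or not. For $a \in G$ we use the trivial bound $|C(a,\cdot)| \le n$; for $a \notin G$ we use $|C(a,\cdot)| \le u'\,n$ by definition of $G$. This gives
\begin{equation*}
	u\,n^2 \;\le\; |C| \;\le\; g\,n + (n-g)\,u'\,n \;=\; u'\,n^2 + g\,(1-u')\,n\,.
\end{equation*}
Rearranging yields $g\,(1-u')\,n \ge (u-u')\,n^2$, hence $g \ge \frac{u-u'}{1-u'}\,n$, which is the first claimed inequality. The second inequality $\frac{u-u'}{1-u'}\,n \ge (u-u')\,n$ is immediate since $u-u' > 0$ and $1-u' \le 1$ (as $u' \in (0,1)$, using $u' < u < 1$).

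There is no real obstacle here: the only thing to be a little careful about is that the two crude per-row bounds ($\le n$ and $\le u'n$) are applied on the correct index sets, and that $u' \in (0,1)$ so that $1-u' > 0$ and the division is legitimate. One could also phrase it via Markov's inequality applied to the row-cardinality profile, but the direct two-line computation above is cleanest and self-contained, so that is the route I would take.
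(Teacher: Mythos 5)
Your proposal is correct and is essentially the paper's own argument: the paper likewise splits $|C| = \sum_{a \in J}|C(a,\cdot)| + \sum_{a \in J^c}|C(a,\cdot)|$ with the bounds $\le n$ on the good rows and $\le u'n$ on the bad ones, rearranges to $|J| \ge \frac{|C|-u'n^2}{(1-u')n}$, and plugs in $|C|\ge u\,n^2$. Nothing further is needed.
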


\begin{proof}
Setting $J := \big\{ a \in \{1,\ldots,n\} \colon\ |C(a,\cdot)| > 
u' \, n \big\}$ we 
can bound
\begin{equation*}
\begin{split}
	|C| = \sum_{a \in J} |C(a,\cdot)| + \sum_{a \in J^c} |C(a,\cdot)| 
	& \le |J| \, n + (n-|J|) \, 
 u' \, n \\
	& = (1- u') \, n \, |J|
	+ u' \, n^2 \,,
\end{split}
\end{equation*}
that is $|J| \ge \frac{|C| - 
u' \, n^2}{(1 - u') \, n}$.
Plugging in $|C| \ge u\, n^2$ completes the proof.
\end{proof}

\subsubsection{Step 3}
We finally prove \eqref{eq:contrBox}. 
From \eqref{eq:sigma2hv} and \eqref{eq:sigmah} we can write
\begin{equation*}
\begin{split}
	\sigma_n^2(B_{\bar\ell})
	\ge \sigma_{n,\mathsf{h}}^2(B_{\bar\ell})
={2} \sum_{b = 1}^{n} |B_{\bar\ell}(\cdot,b) \cap \bbS_{\mathsf{h}}(b)|\, \big( |B_{\bar\ell}(\cdot,b) \cap \bbS_{\mathsf{h}}(b)| -1\big)
\end{split}
\end{equation*}
Restricting the sum to the set
$B := \{b \in I \colon \ell_b = \bar\ell\}$, recalling \eqref{eq:elleb} and \eqref{eq:row-dominant}, we obtain
\begin{equation} \label{eq:estrow}
\begin{split}
	\sigma_{n,h}^2(B_{\bar\ell})
	&\ge {2}(\beta' - 7 \sqrt{\eta}) \, n \, (\beta -{2}\sqrt{\eta}) \, n \, \big((\beta -{2}\sqrt{\eta}) \, n-1\big)\\
	&\ge {2} (\beta' - 7 \sqrt{\eta}) 
 \, (\beta - {2}\sqrt{\eta} +
 O(\tfrac{1}{n}))^2
 \, {n^3} \ge {2} \big( \beta' \beta^2 - {11}\sqrt{\eta} + O(\tfrac{1}{n}) \big) \, {n^3} \,,
\end{split}
\end{equation}
which completes the proof.\qed

\begin{remark}{\rm 
In \eqref{eq:estrow} we only estimated the ``horizontal'' contribution to the variance $\sigma_{n,\mathsf{h}}^2(B_{\bar\ell})$.
Of course, by symmetry, a version of \eqref{eq:row-dominant} holds for columns, for some color $\bar\ell'$,
hence
an estimate like \eqref{eq:estrow} 
also holds for $\sigma_{n,\mathsf{v}}^2(B_{\bar\ell'})$.
Note that both $B_{\bar\ell}$
and $B_{\bar\ell'}$ have cardinality at least $(\beta'-7\sqrt{\eta})\,(\beta-2\sqrt{\eta}) \, n^2$ by \eqref{eq:elleb} and \eqref{eq:row-dominant}, hence they must overlap when $\beta \, \beta' = \beta(2\beta - 1) > \frac{1}{2}$,
i.e.\ for $\beta > \frac{1+\sqrt{5}}{4} \simeq 0.81$:
in this case 
$\bar\ell = \bar\ell'$, hence
we can improve our final bound
\eqref{eq:contrBox}
by a factor~$2$:
\begin{equation}
    \sigma_n^2(B_{\bar\ell})
	= \sigma_{n,\mathsf{h}}^2(B_{\bar\ell})
 + \sigma_{n,\mathsf{v}}^2(B_{\bar\ell})
 \ge {4}\big( \beta' \beta^2 - {11}\sqrt{\eta} + O(\tfrac{1}{n}) \big) \, {n^3 }\,.
\end{equation}}
\end{remark}

\appendix
\section{Proofs of some technical results}\label{appendix}

\subsection{Proof of Theorem \ref{t:probamethod}}\label{ss:proofproba}

Let $\Delta(\alpha,n)$ denote the maximal degree in the Erd\"os-Renyi random graph $G(n,p_n)$, where $p_n = n^{\alpha-2}$. By Theorem \ref{t:peres} (including the subsequent discussion) and Remark~\ref{r:afterdejong}-\ref{item:rempoint2}, it is sufficient to show that, for $\alpha$ as in the statement, there exists $0<\epsilon<\alpha/2$ such that
$$
\mathbb{P}[\Delta(\alpha,n)\geq n^{\alpha/2-\epsilon}] \longrightarrow 0, \quad n\to\infty.
$$
One has that 
$$
\mathbb{P}[\Delta(\alpha,n)\geq n^{\alpha/2-\epsilon}]\leq n\mathbb{P}[B(n-1,p_n)\geq n^{\alpha/2-\epsilon}]\leq n\mathbb{P}[B(n,p_n)\geq n^{\alpha/2-\epsilon}],$$
where $B(k,p)$ denotes a binomial random variable with parameters $(k,p)$, and the conclusion follows from an application of the multiplicative Chernoff inequality: according e.g. to \cite[Problem 1.6-(e)]{DuPanconesi} we can bound $\mathbb{P}(B(k,p) \ge t \, kp) \le 2^{-t \, kp}$ provided $t > 2 \rme$, hence  for $0<\epsilon < \min\{\alpha/2, 1-\alpha/2\}$ 
$$
\mathbb{P}[B(n,p_n)\geq n^{\alpha/2-\epsilon}]\leq 2^{-(n^{1- \epsilon - \alpha/2})n^{\alpha-1}} = {2^{-n^{\alpha/2-\epsilon}}} = o(n^{-1}), \quad n\to \infty,
$$
thus yielding the desired conclusion.

\subsection{Proof of Theorem \ref{th:k-irr+}}

\smallskip

To prove Theorem \ref{th:mainresulthyper} we follow very closely the proof of Theorem~\ref{th:mainSPECTRUMgraph}.
By contradiction we assume that \eqref{e:mucca} holds for some $k\geq 2$ and that $\{G_n\}$ is partially reducibile, i.e.\
there exists $\rho \in (0,1]$,
$V'_n \subseteq V_n$ satisfying
\eqref{eq:V'rho} and
partitions $B_1,\ldots, B_{m_n}$ of $V'_n$ such that
the three properties \ref{item:red1''}, \ref{item:red2''} and \ref{item:red3''} of partial reducibility hold. Ordering the sets as in \eqref{e:ordering}, we can apply \eqref{e:simple} 
which yields \eqref{eq:bound-muk},
that we copy for convenience:
\begin{equation} \label{eq:bound-muk-new}
    \frac{\mu_k^{(n)}}{2}\leq \frac{\sum_{i=k}^{m_n} q^{(n)}_i  \phi(B_i)}{\sum_{i=k}^{m_n} q^{(n)}_i}
\qquad \text{where again} \quad
q^{(n)}_i := \frac{\text{vol}(B_i)}{\text{vol}(V_n)}
= \frac{\text{vol}(B_i)}{2|\cE_n|}\,.
\end{equation}
This time the vector $(q^{(n)}_i)_{1 \le i \le m_n}$ needs not be a probability, however we still have
\begin{equation}
    \sum_{i=1}^{m_n} q^{(n)}_i
    = \frac{\text{vol}(V'_n)}{\text{vol}(V_n)} \to \rho > 0 \,.
\end{equation}
To complete the proof, it is then sufficient to show that
\begin{equation}
    \sum_{i=1}^{m_n} q^{(n)}_i \phi(B_i) = o(1) \qquad \text{and} \qquad
    \max_{1\le i \le m_n} q^{(n)}_i = o(1)\,,
\end{equation}
(so that $\sum_{i=k}^{m_n} q^{(n)}_i \to \rho > 0$).
Applying
\eqref{eq:BBbar} and \eqref{eq:pbound-}, it remains to show that
\begin{equation} \label{eq:newgoal}
\sum_{i=1}^{m_n} E(B_i, \widebar{B_i} )= o(|\cE_n|) \,.
\end{equation}
To this purpose, we need to modify \eqref{eq:parti} because 
$B_1,\ldots, B_{m_n}$ is only a partition of $V'_n$. To obtain a partition of the full set of vertices $V_n$, we define $B_{m_n+1} := \widebar{V'_n}$.
Arguing as in \eqref{eq:parti}, we can now write
\begin{equation}\label{eq:parti-new}
    \frac{1}{2} 
\sum_{i=1}^{m_n+1} E(B_i, \widebar{B_i})
= \frac{1}{2} \sum_{\substack{i,j=1\\ i\neq j}}^{m_n+1} 
E(B_i, B_j)
= |\cE_n| - \sum_{i=1}^{m_n+1} E(B_i,B_i) 
\end{equation}
and isolating the terms $i=m_n+1$
in the first and last sums we obtain
\begin{equation*}
\begin{split}
\frac{1}{2} \sum_{i=1}^{m_n} E(B_i, \widebar{B_i}) 
& = \bigg\{ |\cE_n| - \frac{1}{2}E(V'_n, \widebar{V'_n}) - E(\widebar{V'_n}, \widebar{V'_n}) \bigg\} - \sum_{i=1}^{m_n} E(B_i,B_i) \\
& = \bigg\{ |\cE_n| - E(\widebar{V'_n}, \widebar{V'_n}) 
- o(|\cE_n|) \bigg\} -
E(V'_n, V'_n) - o(|\cE_n|) \,,
\end{split}
\end{equation*}
where we applied \eqref{eq:V'rho} and the assumption of partial reducibility.
We finally observe that, plainly,
$|\cE_n| = E(V'_n, V'_n) + E(\widebar{V'_n}, \widebar{V'_n})
+ E(V_n, \widebar{V'_n})
= E(V'_n, V'_n) + E(\widebar{V'_n}, \widebar{V'_n}) + o(|\cE_n|)$
which completes the proof.

\subsection{Proof of Theorem \ref{th:mainresulthyper}}

We first introduce and recall the following notation. For $\ell \le k$ and $S_1,\ldots,S_\ell \subseteq V_n$ disjoint and nonempty subsets, we denote
\begin{eqnarray*}
 && \cE(S_1\,,\ldots\,, S_\ell) \coloneq  \left\{ \, e \in \cE_n : e \subset  S_1 \cup \ldots \cup S_\ell : \right. \\ 
&&\quad\quad\quad\quad\quad\quad\quad\quad\quad\quad\quad\quad\quad\quad\left.  \exists \, v_1,\ldots,v_\ell \in e\,  \text{ with } \, v_1 \in S_1, \ldots, v_\ell \in S_\ell \,\right\}\,,
\end{eqnarray*}
as the set of edges with elements in $S_1 \cup \ldots \cup S_\ell$ and at least one of them in each $S_i$, for $i=1,\ldots,\ell$.
In particular, we recall
\begin{equation*}
\cE_n (S,S) := \big\lbrace \, e \in \cE_n : e \subset S \, \big\rbrace\,,
\end{equation*}
while 
\begin{equation}\label{eq:deltaS}
   \cE(S,\bar S) = \big\lbrace \, e \in \cE_n : \exists \, v_1,v_2 \in e \text{ with } v_1 \in S, \, v_2 \in \bar S \, \big\rbrace = \partial S\,.
\end{equation}

\medskip
We now prove Theorem \ref{th:mainresulthyper}. The arguments follow the same guidelines as the proof of Theorem~\ref{th:mainSPECTRUMgraph}. However, the more complex structure of hypergraphs requires additional combinatorial details, which we briefly outline below.

We assume that \eqref{eq:liminfhyp} holds for some $k\geq 2$ and that there exists a sequence of partitions $B_1,\ldots, B_{m_n}$, $n\ge 1$, such that the three properties \ref{item:irr1hyp}, \ref{item:irr2hyo}, \ref{item:irr3hyp} are verified. Moreover, without loss of generality, the relation \eqref{e:orderinghyp} holds for $n\ge 1$.
Recall that in this setting $r(\cG)=cr(\cG)=d$, hence \eqref{e:simplehyp} yields
\begin{equation} \label{eq:bound-mukhyp}
    \frac{\mu_k^{(n)}}{2(d-1)}\leq \frac{\sum_{i=k}^{m_n} q^{(n)}_i  \phi(B_i)}{\sum_{i=k}^{m_n} q^{(n)}_i}\,,
\quad \text{where } \
q^{(n)}_i := \frac{\mathrm{vol}\big(B_i\big)}{d \, w_n(\cE_n)}\,.
\end{equation}
To conclude the proof, it suffices to show that the right-hand side of the previous inequality converges necessarily to zero. 

Since the subsets $B_i$'s form a partition of $V_n$, it is possible to express the set $\partial B_i$, $i \in \{1,\ldots,m_n\}$, in terms of a disjoint union:
\begin{equation*}\label{eq:union1}
    \partial B_i= \cE(B_i,\widebar{B_i}) = \bigcup_{\alpha = 1}^{d-1} \ \bigcup_{\substack{1 \le i_1 < \cdots < i_\alpha \le m_n\\ i_1,\ldots,i_\alpha \ne i}} \cE(B_i,B_{i_1},\ldots,B_{i_\alpha} )\,,
\end{equation*}
(recall \eqref{eq:edgesinS} and \eqref{eq:deltaS}).
Moreover, note that $\cE_n \ \setminus \ \bigcup_{i=1}^{m_n} \cE(B_i,B_i )\,=\,
    \bigcup_{i=1}^{m_n} \cE(B_i,\widebar{B_i})$, where the union in the right--hand side is not disjoint, yet with explicit cardinality 
\begin{equation*}\label{eq:union3}
    \bigg| \, \bigcup_{i=1}^{m_n} \cE(B_i,\widebar{B_i})\,\bigg| = \sum_{\alpha = 1}^{d-1} \frac{1}{\alpha+1}\,\sum_{\substack{1 \le i_1 < \cdots < i_\alpha \le m_n\\ i_1,\ldots,i_\alpha \ne i}} \big| \cE(B_i,B_{i_1},\ldots,B_{i_\alpha} )\big|\,,
\end{equation*}
where the factor $\frac{1}{\alpha+1}$ compensates for overcounting: indeed, each edge that
intersects exactly \(\alpha+1\) blocks is counted once for every choice of the
distinguished \(B_i\) among them, and hence appears \(\alpha+1\) times in
the sum.
As a consequence, by \ref{item:irr2hyo} we have
\begin{equation}\label{eq:partihyp}
\begin{split}
\frac{1}{d}\, 
\sum_{i=1}^{m_n} w_n\big( \cE(B_i, \widebar{B_i}) \big)&
= \frac{1}{d}\, 
\sum_{i=1}^{m_n}\,\sum_{\alpha = 1}^{d-1} \,\sum_{\substack{1 \le i_1 < \cdots < i_\alpha \le m_n\\ i_1,\ldots,i_\alpha \ne i}} w_n\big( \cE(B_i,B_{i_1},\ldots,B_{i_\alpha} )\big)
 \\
&\le
\sum_{i=1}^{m_n}\,\sum_{\alpha = 1}^{d-1} \frac{1}{\alpha+1}\,\sum_{\substack{1 \le i_1 < \cdots < i_\alpha \le m_n\\ i_1,\ldots,i_\alpha \ne i}} w_n\big(\cE(B_i,B_{i_1},\ldots,B_{i_\alpha} )\big)\\
&= w_n(\cE_n) - \sum_{i=1}^{m_n} w_n\big( \cE(B_i,B_i)\big) = o\big(w_n(\cE_n)\big)
\end{split}
\end{equation}
and, then
\begin{equation} \label{eq:BBbarhyp}
    \sum_{i=1}^{m_n} \frac{w_n\big(\cE(B_i, \widebar{B_i}\big)}{d \, w_n(\cE_n)} = \sum_{i=1}^{m_n} \frac{w_n\big(\partial B_i \big)}{d \, w_n(\cE_n)} 
= \sum_{i=1}^{m_n} q^{(n)}_i \phi(B_i)
= o(1) \,,
\end{equation}
which already shows that the numerator in the bound
\eqref{eq:bound-mukhyp} for $\mu^{(n)}_k$ vanishes.
It remains to prove that the
denominator is bounded away from zero. We still have
\begin{equation*}
\begin{split}
    \sum_{i=1}^{m_n} q_i^{(n)} &= \frac{\sum_{i=1}^{m_n} \mathrm{vol}\big(B_i\big)}{d \, w_n(\cE_n)} = \frac{\sum_{i=1}^{m_n} \sum_{v \in B_i } \sum_{e \in \cE_v}w_n(e)}{d \, w_n(\cE_n)}
    = \frac{ \sum_{v \in V_n } \sum_{e \in \cE_v}w_n(e)}{d \, w_n(\cE_n)}=1\,,
\end{split}
\end{equation*}
(see \eqref{eq:volumeS}). 
Therefore, we just need to prove that $\sum_{i=1}^{k-1} q^{(n)}_i \to 0$, which is implied by 
\begin{equation} \label{eq:pbound}
\begin{split}
    q^{(n)}_i &=
\frac{\mathrm{vol}\big(B_i\big)}{d \, w_n(\cE_n)} =  \frac{\sum_{v \in B_i } \sum_{e \in \cE_v}w_n(e)}{d \, w_n(\cE_n)} \le \frac{ w_n\big(\cE(B_i,B_i\big)}{w_n(\cE_n)} +\frac{(d-1)\,w_n\big(\cE(B_i,\widebar{B_i})\big)}{d \, w_n(\cE_n)}= o(1) \,,
\end{split}
\end{equation}
uniformly for~$i \in \{1,\ldots, m_n\}$, where we applied \ref{item:irr3hyp} and \eqref{eq:BBbarhyp}.

\subsection{Proof of \eqref{e:cdfp}}\label{ss:combproof} We adopt the notation and assumptions of Example \ref{e:fcp}; also, given a permutation $\rho$ of $[b]$ and $A\subseteq V_n$, we write $A^\rho$ to denote the class of all $(v_1,...,v_b)\in V_n$ such that $(v_{\rho(1)},..., v_{\rho(b)})\in A$.  For every $s=1,...,d$, write $L_s$ to denote the set of those $\ell =1,...,d$ such that $\big|S_\ell \cap \{ (s,1),...,(s,b)\} \big| = 1$ (note that the size of the previous intersection is either zero or one, by construction). We stress that $|L_s| = b$, for $s=1,...,d$, and that each $\ell = 1,...,d$ is contained in exactly $b$ distinct sets $L_s$. Without loss of generality, we always label the elements of $L_s$ in such a way that, if $L_s = \{\ell_1,..., \ell_b\}$, then $\ell_1<\ell_2<\cdots < \ell_b$. For $s=1,...,d$, we denote by $$\pi_s : [n]^{d} \to [n]^{L_s} : {\bf i}= (i_1,...,i_d) \mapsto \pi_s({\bf i})=(i_{\ell_1},...,i_{\ell_b}),$$ where $\{\ell_1,...,\ell_b\}= L_s$. 
One can easily show that there exist permutations $\rho_1,..., \rho_d$ of $[b]$ such that 
\begin{equation}\label{e:hpi}
| E^0_n \cap (A_1\times \cdots \times A_d)| = \int_{[n] ^d} \prod_{s=1}^d {\bf 1}_{A_s^{\rho_s}} ( \pi_s ({\bf i} ))\, \nu_d({\rm d}{\bf i}) =  \int_{[n] ^d} \prod_{s=1}^d {\bf 1}_{A_s^{\rho_s}} ( \pi_s ({\bf i} ))^{1/b} \, \nu_d({\rm d}{\bf i}) ,
\end{equation}
where, for $t\geq 1$, $\nu_t$ stands for the counting measure on $[n]^t$. We can now directly apply \cite[Theorem 2.1]{F92} and deduce that the right-hand side of \eqref{e:hpi} is bounded by 
$$
\prod_{s=1}^d \left( \int_{[n] ^b} {\bf 1}_{A_s^{\rho_s}} ( {\bf i} ) \, \nu_b({\rm d}{\bf i}) \right)^{1/b}.
$$
Since $[n] ^b$ is a symmetric set, one has that
$$
\int_{[n] ^b} {\bf 1}_{A_s^{\rho_s}} ( {\bf i} ) \, \nu_b({\rm d}{\bf i}) = |A_s|,
$$
and the conclusion follows immediately. \qed

\section{Cartesian products}\label{appendixB}

In this appendix, we recall some basic definitions and properties of Cartesian products of regular graphs. See e.g. \cite[Chapters 4, 5 and 33]{ProductHandbook} and \cite[Section 1.4]{BrHaBook} for a full picture. Fix $d\geq 2$, and let $G = (V, \mathcal{E})$ be an undirected (loop-free) $d$-regular graph such that $|V| = N$. We denote by $ \lambda_1\geq \cdots \geq \lambda_N$ the eigenvalues of the adjacency matrix of $G$, and by $0 = \mu_1\leq \cdots \leq \mu_N\leq 2$ the eigenvalues of the corresponding normalized Laplacian; see Section \ref{ss:generalgraph} for details. Fix $m\geq 2$: throughout the paper, we use the symbols
$$ G^{\cpow m} = \underbrace{G\,  \raisebox{-0.25ex}{\scalebox{1.8}{$\square$}} \cdots \raisebox{-0.25ex}{\scalebox{1.8}{$\square$}} \, G}_{m\,\,{\rm times}} $$
to denote the {\em $m$th Cartesian product} of $G$. We recall that $G^{\cpow m}$ is the graph whose vertices are given by the set
$$
V^m = \{v =(v_1,...,v_m) : v_i\in V\} 
$$
and such that $v=(v_1,...,v_m)\sim w=(w_1,...,w_m)$ if and only if there exists $j\in [m]$ such that $v_i = w_i$ for all $i\neq j$ and $\{v_j , w_j\}\in \mathcal{E}$ (that is, if and only if $v_j,w_j$ are adjacent in $G$). For an arbitrary $(i_1,...,i_m)\in [N]^m$, we introduce the notation
\begin{equation}\label{e:cartspec}
    \Lambda(i_1,...,i_m):= \lambda_{i_1}+\cdots +\lambda_{i_m}, \quad \mbox{and}\quad M(i_1,...,i_m) := \mu_{i_1}+\cdots + \mu_{i_m}.
\end{equation}
The following facts are used in several parts of the paper, and can be routinely checked.
\begin{enumerate}
    \item $G^{\cpow m}$ is $(dm)$-regular and, consequently, the number of edges in $G^{\cpow m}$ is $2^{-1} dm N^m$.
    \item The spectrum of the adjacency matrix of $G^{\cpow m}$ is given by the set
    $$
     \left\{\Lambda(i_1,...,i_m) : (i_1,...,i_m)\in [N]^m\right\},
    $$
    where we have used the notation introduced in the first part of \eqref{e:cartspec}.
    \item The spectrum of the normalized Laplacian associated with $G^{\cpow m}$ is
    \[
    \left\{\tfrac{1}{m}\, M(i_1,\dots,i_m) : (i_1,\dots,i_m)\in [N]^m\right\},
    \]
    where we have used the notation appearing in the second part of \eqref{e:cartspec}. To see this, recall that, since $G^{\cpow m}$ is $(dm)$-regular and its adjacency eigenvalues are the sums
    $\Lambda(i_1,\dots,i_m)$ defined above, the associated normalized Laplacian eigenvalues are
    \[
    1 - \frac{\Lambda(i_1,\dots,i_m)}{dm}
    = \frac{1}{m}\left( \mu_{i_1} + \cdots + \mu_{i_m}\right),
    \]
    which yields the above formula. Note that the factor $1/m$ ensures that all eigenvalues 
    of the normalized Laplacian of $G^{\cpow m}$ lie in $[0,2]$, as they should.
\end{enumerate}

\bibliographystyle{abbrv}
\bibliography{bibliografia}

\begin{thebibliography}{10}

\bibitem{Alo86}
N.~Alon.
\newblock Eigenvalues and expanders.
\newblock {\em Combinatorica}, 6:83--96, 1986.

\bibitem{AM85}
N.~Alon and V.~D. Milman.
\newblock $\lambda_1$, isoperimetric inequalities for graphs, and superconcentrators.
\newblock {\em J. Comb. Theory, Ser. B}, 38(1):73--88, 1985.

\bibitem{AngstPolyEJP}
J.~Angst and G.~Poly.
\newblock Fluctuations in {Salem}-{Zygmund} almost sure central limit theorem.
\newblock {\em Electron. J. Probab.}, 28:40, 2023.
\newblock Id/No 44.

\bibitem{AnjosNeto}
M.~F. Anjos and J.~Neto.
\newblock Spectral bounds for graph partitioning with prescribed partition sizes.
\newblock {\em Discrete Appl. Math.}, 269:200--210, 2019.

\bibitem{aroraFlowEmbedding}
S.~Arora, S.~Rao, and U.~Vazirani.
\newblock Expander flows, geometric embeddings and graph partitioning.
\newblock {\em J. ACM}, 56(2), Apr. 2009.

\bibitem{baldirinott}
P.~Baldi and Y.~Rinott.
\newblock On normal approximations of distributions in terms of dependency graphs.
\newblock {\em Ann. Probab.}, 17(4):1646--1650, 1989.

\bibitem{B21}
A.~Banerjee.
\newblock On the spectrum of hypergraphs.
\newblock {\em Linear Algebra and its Applications}, 614:82--110, 2021.

\bibitem{BCCT08}
J.~Bennett, A.~Carbery, M.~Christ, and T.~Tao.
\newblock The brascamp–lieb inequalities: Finiteness, structure and extremals.
\newblock {\em Geom. Funct. Anal.}, 17:1343--1415, 2008.

\bibitem{BDMM22}
B.~Bhattacharya, S.~Das, S.~Mukherjee, and S.~Mukherjee.
\newblock Fluctuations of quadratic chaos.
\newblock {\em Commun. Math. Phys.}, 405(10):51, 2024.
\newblock Id/No 237.

\bibitem{bleiHarmonic1}
R.~Blei.
\newblock Combinatorial dimension and certain norms in harmonic analysis.
\newblock {\em Am. J. Math.}, 106:847--887, 1984.

\bibitem{bleibook}
R.~Blei.
\newblock {\em Analysis in integer and fractional dimensions}, volume~71 of {\em Camb. Stud. Adv. Math.}
\newblock Cambridge: Cambridge University Press, 2001.

\bibitem{Blei2011Survey}
R.~Blei.
\newblock Measurements of interdependence.
\newblock {\em Lith. Math. J.}, 51(2):141--154, 2011.

\bibitem{bleijanson}
R.~Blei and S.~Janson.
\newblock Rademacher chaos: tail estimates versus limit theorems.
\newblock {\em Ark. Mat.}, 42(1):13--29, 2004.

\bibitem{BleiPeresScchmerl}
R.~Blei, Y.~Peres, and J.~Schmerl.
\newblock Fractional products of sets.
\newblock {\em Random Struct. Algorithms}, 6(1):113--119, 1995.

\bibitem{blei1979}
R.~C. Blei.
\newblock Fractional cartesian products of sets.
\newblock {\em Ann. Inst. Fourier}, 29(2):79--105, 1979.

\bibitem{bleiharmonic2}
R.~C. Blei.
\newblock {\em Fractional dimensions and bounded fractional forms}, volume 331 of {\em Mem. Am. Math. Soc.}
\newblock Providence, RI: American Mathematical Society (AMS), 1985.

\bibitem{BleiSRandom}
R.~C. Blei and T.~W. K\"orner.
\newblock Combinatorial dimension and random sets.
\newblock {\em Israel J. Math.}, 47(1):65--74, 1984.

\bibitem{bollobas}
B.~Bollob{\'a}s.
\newblock Random graphs.
\newblock London-Orlando etc.: {Academic} {Press} ({Harcourt} {Brace} {Jovanovich}, {Publishers}). {XVI}, 447 p. hbk: {{\textsterling}} 52.00; {\$} 58.50; pbk: {{\textsterling}} 26.00; {\$} 29.95 (1985)., 1985.

\bibitem{BrHaBook}
A.~E. Brouwer and W.~H. Haemers.
\newblock {\em Spectra of graphs}.
\newblock Universitext. Springer, New York, 2012.

\bibitem{CC22}
F.~Caravenna and F.~Cottini.
\newblock Gaussian limits for subcritical chaos.
\newblock {\em Electron. J. Probab.}, 27:1--35, 2022.

\bibitem{CSZ1}
F.~Caravenna, R.~Sun, and N.~Zygouras.
\newblock Polynomial chaos and scaling limits of disordered systems.
\newblock {\em J. Eur. Math. Soc. (JEMS)}, 19(1):1--65, 2017.

\bibitem{CSZ17b}
F.~Caravenna, R.~Sun, and N.~Zygouras.
\newblock Universality in marginally relevant disordered systems.
\newblock {\em Ann. Appl. Probab.}, 27:3050--3112, 2017.

\bibitem{CSZ3}
F.~Caravenna, R.~Sun, and N.~Zygouras.
\newblock The critical 2d stochastic heat flow.
\newblock {\em Invent. Math.}, 233(1):325--460, 2023.

\bibitem{Cha08}
S.~Chatterjee.
\newblock {A new method of normal approximation}.
\newblock {\em The Annals of Probability}, 36(4):1584 -- 1610, 2008.

\bibitem{Che71}
J.~Cheeger.
\newblock A lower bound for the smallest eigenvalue of the laplacian.
\newblock In {\em Problems in Analysis}, pages 195--200. Princeton University Press, Princeton, 1971.

\bibitem{ChungSpectralGraph}
F.~R.~K. Chung.
\newblock {\em Spectral graph theory}, volume~92 of {\em CBMS Regional Conference Series in Mathematics}.
\newblock Conference Board of the Mathematical Sciences, Washington, DC; by the American Mathematical Society, Providence, RI, 1997.

\bibitem{LarryGraphs}
N.~Cook, L.~Goldstein, and T.~Johnson.
\newblock Size biased couplings and the spectral gap for random regular graphs.
\newblock {\em Ann. Probab.}, 46(1):72--125, 2018.

\bibitem{DeServedio2}
A.~De, I.~Diakonikolas, and R.~A. Servedio.
\newblock Deterministic approximate counting for juntas of degree-2 polynomial threshold functions.
\newblock In {\em 2014 IEEE 29th Conference on Computational Complexity (CCC)}, pages 229--240, 2014.

\bibitem{DeServedio1}
A.~De and R.~A. Servedio.
\newblock Efficient deterministic approximate counting for low-degree polynomial threshold functions.
\newblock In {\em Proceedings of the Forty-Sixth Annual ACM Symposium on Theory of Computing}, STOC '14, page 832–841, New York, NY, USA, 2014. Association for Computing Machinery.

\bibitem{dJ87}
P.~de~Jong.
\newblock A central limit theorem for generalized quadratic forms.
\newblock {\em Probab. Theory Relat. Fields}, 75:261--277, 1987.

\bibitem{dJ90}
P.~de~Jong.
\newblock A central limit theorem for generalized multilinear forms.
\newblock {\em J. Multivariate Anal.}, 34:275--289, 1990.

\bibitem{dejongRSA}
P.~de~Jong.
\newblock A central limit theorem with applications to random hypergraphs.
\newblock {\em Random Struct. Algorithms}, 8(2):105--120, 1996.

\bibitem{DeyaNourdinCMP}
A.~Deya, S.~Noreddine, and I.~Nourdin.
\newblock Fourth moment theorem and {{\(q\)}}-{Brownian} chaos.
\newblock {\em Commun. Math. Phys.}, 321(1):113--134, 2013.

\bibitem{DeyaNourdinBernoulli}
A.~Deya and I.~Nourdin.
\newblock Invariance principles for homogeneous sums of free random variables.
\newblock {\em Bernoulli}, 20(2):586--603, 2014.

\bibitem{DierickxNPRCMP}
G.~Dierickx, I.~Nourdin, G.~Peccati, and M.~Rossi.
\newblock Small scale {CLTs} for the nodal length of monochromatic waves.
\newblock {\em Commun. Math. Phys.}, 397(1):1--36, 2023.

\bibitem{DKP22}
D{\"o}bler, Kasprzak, and Peccati.
\newblock Title missing.
\newblock {\em Probab. Theory Relat. Fields}, 2022.
\newblock Dettagli da completare.

\bibitem{DoblerSPL}
C.~D{\"o}bler.
\newblock The {Berry}-{Esseen} bound in de {Jong}'s {CLT}.
\newblock {\em Stat. Probab. Lett.}, 215:8, 2024.
\newblock Id/No 110244.

\bibitem{DPKPTRF}
C.~D{\"o}bler, M.~Kasprzak, and G.~Peccati.
\newblock The multivariate functional de {Jong} {CLT}.
\newblock {\em Probab. Theory Relat. Fields}, 184(1-2):367--399, 2022.

\bibitem{DoblerKrokowski}
C.~D\"obler and K.~Krokowski.
\newblock On the fourth moment condition for {R}ademacher chaos.
\newblock {\em Ann. Inst. Henri Poincar\'e{} Probab. Stat.}, 55(1):61--97, 2019.

\bibitem{DPejp}
C.~D{\"o}bler and G.~Peccati.
\newblock Quantitative de {Jong} theorems in any dimension.
\newblock {\em Electron. J. Probab.}, 22:35, 2017.
\newblock Id/No 2.

\bibitem{DuPanconesi}
D.~P. Dubhashi and A.~Panconesi.
\newblock {\em Concentration of measure for the analysis of randomized algorithms.}
\newblock Cambridge: Cambridge University Press, 2009.

\bibitem{efronstein}
B.~Efron and C.~Stein.
\newblock The jackknife estimate of variance.
\newblock {\em Ann. Stat.}, 9:586--596, 1981.

\bibitem{F92}
H.~Finner.
\newblock A generalization of h{\"o}lder's inequality and some probability inequalities.
\newblock {\em Ann. Probab.}, 20(4):1893--1901, 1992.

\bibitem{FriedmanSecondEig}
J.~Friedman.
\newblock On the second eigenvalue and random walks in random {{\(d\)}}-regular graphs.
\newblock {\em Combinatorica}, 11(4):331--362, 1991.

\bibitem{Garban2011}
C.~Garban.
\newblock Oded {S}chramm's contributions to noise sensitivity.
\newblock {\em Ann. Probab.}, 39(5):1702--1767, 2011.

\bibitem{GPS10}
C.~Garban, G.~Pete, and O.~Schramm.
\newblock The {F}ourier spectrum of critical percolation.
\newblock {\em Acta Math.}, 205(1):19--104, 2010.

\bibitem{GodsilRoy}
C.~Godsil and G.~Royle.
\newblock {\em Algebraic graph theory}, volume 207 of {\em Graduate Texts in Mathematics}.
\newblock Springer-Verlag, New York, 2001.

\bibitem{ProductHandbook}
R.~Hammack, W.~Imrich, and S.~Klav{\v{z}}ar.
\newblock {\em Handbook of product graphs}.
\newblock Discrete Math. Appl. (Boca Raton). Boca Raton, FL: CRC Press, 2nd ed. edition, 2011.

\bibitem{HeMaPolyLaw}
R.~Herry, D.~Malicet, and G.~Poly.
\newblock Limit distributions for polynomials with independent and identically distributed entries, 2024.

\bibitem{Hoeffding}
W.~Hoeffding.
\newblock A class of statistics with asymptotically normal distribution.
\newblock {\em Ann. Math. Stat.}, 19:293--325, 1948.

\bibitem{hlw}
S.~Hoory, N.~Linial, and A.~Wigderson.
\newblock Expander graphs and their applications.
\newblock {\em Bull. Amer. Math. Soc.}, 43(4):439--561, 2006.

\bibitem{JansonBook}
S.~Janson.
\newblock {\em Gaussian {H}ilbert spaces}, volume 129 of {\em Cambridge Tracts in Mathematics}.
\newblock Cambridge University Press, Cambridge, 1997.

\bibitem{kallengerg}
O.~Kallenberg.
\newblock {\em Foundations of modern probability}.
\newblock Probab. Appl. New York, NY: Springer, 1997.

\bibitem{KNPSaop}
T.~Kemp, I.~Nourdin, G.~Peccati, and R.~Speicher.
\newblock Wigner chaos and the fourth moment.
\newblock {\em Ann. Probab.}, 40(4):1577--1635, 2012.

\bibitem{KoikeJOTP}
Y.~Koike.
\newblock High-dimensional central limit theorems for homogeneous sums.
\newblock {\em J. Theor. Probab.}, 36(1):1--45, 2023.

\bibitem{korolyuk}
V.~S. Korolyuk and Y.~V. Borovskikh.
\newblock {\em Theory of {{\(U\)}}-statistics. {Updated} and transl. from the {Russian} by {P}. {V}. {Malyshev} and {D}. {V}. {Malyshev}}, volume 273 of {\em Math. Appl., Dordr.}
\newblock Dordrecht: Kluwer Academic Publishers, 1994.

\bibitem{largesudakov}
M.~Krivelevich and B.~Sudakov.
\newblock The largest eigenvalue of sparse random graphs.
\newblock {\em Comb. Probab. Comput.}, 12(1):61--72, 2003.

\bibitem{chessboardgraphs}
R.~Laskar and C.~Wallis.
\newblock Chessboard graphs, related designs, and domination parameters.
\newblock {\em J. Stat. Plann. Inference}, 76(1-2):285--294, 1999.

\bibitem{LPY}
G.~Last, G.~Peccati, and D.~Yogeshwaran.
\newblock Phase transitions and noise sensitivity on the {Poisson} space via stopping sets and decision trees.
\newblock {\em Random Struct. Algorithms}, 63(2):457--511, 2023.

\bibitem{LGT14}
J.~Lee, S.~O. Gharan, and L.~Trevisan.
\newblock Multiway spectral partitioning and higher-order cheeger inequalities.
\newblock {\em J. ACM}, 61(6), 2014.

\bibitem{Vempala12}
A.~Louis, P.~Raghavendra, P.~Tetali, and S.~Vempala.
\newblock Many sparse cuts via higher eigenvalues.
\newblock In {\em Proceedings of the Forty-Fourth Annual ACM Symposium on Theory of Computing}, STOC '12, page 1131–1140, New York, NY, USA, 2012. Association for Computing Machinery.

\bibitem{LuboSurvey}
A.~Lubotzky.
\newblock Expander graphs in pure and applied mathematics.
\newblock {\em Bull. Amer. Math. Soc. (N.S.)}, 49(1):113--162, 2012.

\bibitem{MandelbaumTaqqu83}
A.~Mandelbaum and M.~S. Taqqu.
\newblock Invariance principle for symmetric statistics.
\newblock {\em Ann. Stat.}, 12:483--496, 1984.

\bibitem{MOO10}
E.~Mossel, R.~O'Donnell, and K.~Oleszkiewicz.
\newblock Noise stability of functions with low influences: invariance and optimality.
\newblock {\em Ann. Math. (2)}, 171(1):295--341, 2010.

\bibitem{rammurtysurvey}
M.~R. Murty.
\newblock Ramanujan graphs.
\newblock {\em J. Ramanujan Math. Soc.}, 18(1):33--52, 2003.

\bibitem{NouPecALEA}
I.~Nourdin and G.~Peccati.
\newblock Universal {Gaussian} fluctuations of non-{Hermitian} matrix ensembles: from weak convergence to almost sure {CLTs}.
\newblock {\em ALEA, Lat. Am. J. Probab. Math. Stat.}, 7:341--375, 2010.

\bibitem{NP12}
I.~Nourdin and G.~Peccati.
\newblock {\em Normal Approximations with Malliavin Calculus: From Stein's Method to Universality}.
\newblock Cambridge University Press, 2012.

\bibitem{NPPS}
I.~Nourdin, G.~Peccati, G.~Poly, and R.~Simone.
\newblock Classical and free fourth moment theorems: universality and thresholds.
\newblock {\em J. Theor. Probab.}, 29(2):653--680, 2016.

\bibitem{NPR10}
I.~Nourdin, G.~Peccati, and G.~Reinert.
\newblock Invariance principles for homogeneous sums: universality of gaussian wiener chaos.
\newblock {\em Ann. Probab.}, 38:1947--1985, 2010.

\bibitem{NPR10b}
I.~Nourdin, G.~Peccati, and G.~Reinert.
\newblock Stein's method and stochastic analysis of rademacher functionals.
\newblock {\em Electron. J. Probab.}, 15:1703--1742, 2010.

\bibitem{NouPecRossi19}
I.~Nourdin, G.~Peccati, and M.~Rossi.
\newblock Nodal statistics of planar random waves.
\newblock {\em Commun. Math. Phys.}, 369(1):99--151, 2019.

\bibitem{NouPecSwanJFA}
I.~Nourdin, G.~Peccati, and Y.~Swan.
\newblock Entropy and the fourth moment phenomenon.
\newblock {\em J. Funct. Anal.}, 266(5):3170--3207, 2014.

\bibitem{NuaBook}
D.~Nualart.
\newblock {\em The {M}alliavin calculus and related topics}.
\newblock Probability and its Applications (New York). Springer-Verlag, Berlin, second edition, 2006.

\bibitem{NP05}
D.~Nualart and G.~Peccati.
\newblock Central limit theorems for sequences of multiple stochastic integrals.
\newblock {\em Ann. Probab.}, 33(1):177--193, 2005.

\bibitem{ODonnellBook}
R.~O'Donnell.
\newblock {\em Analysis of {Boolean} functions}.
\newblock Cambridge: Cambridge University Press, 2014.

\bibitem{PecReitz}
G.~Peccati and M.~Reitzner, editors.
\newblock {\em Stochastic analysis for {Poisson} point processes. {Malliavin} calculus, {Wiener}-{It{\^o}} chaos expansions and stochastic geometry}, volume~7 of {\em Bocconi Springer Ser.}
\newblock Milano: Bocconi University Press; Cham: Springer, 2016.

\bibitem{PecTaqBook}
G.~Peccati and M.~Taqqu.
\newblock {\em Wiener chaos: {Moments}, cumulants and diagrams. {A} survey with computer implementation}, volume~1 of {\em Bocconi Springer Ser.}
\newblock Milano: Bocconi University Press; Milano: Springer, 2011.

\bibitem{PecVidottoJMP}
G.~Peccati and A.~Vidotto.
\newblock Gaussian random measures generated by {Berry}'s nodal sets.
\newblock {\em J. Stat. Phys.}, 178(4):996--1027, 2020.

\bibitem{PuderInventiones}
D.~Puder.
\newblock Expansion of random graphs: new proofs, new results.
\newblock {\em Invent. Math.}, 201(3):845--908, 2015.

\bibitem{R79}
V.~I. Rotar'.
\newblock Limit theorems for polylinear forms.
\newblock {\em J. Multivariate Anal.}, 9:511--530, 1979.

\bibitem{SSPHypergraphs}
S.~Saha, K.~Sharma, and S.~Panda.
\newblock On the laplacian spectrum of k-uniform hypergraphs.
\newblock {\em Linear Algebra and its Applications}, 655:1--27, 2022.

\bibitem{serfling}
R.~J. Serfling.
\newblock {\em Approximation theorems of mathematical statistics}.
\newblock Wiley Ser. Probab. Math. Stat. John Wiley \& Sons, Hoboken, NJ, 1980.

\bibitem{SimoneALEA}
R.~Simone.
\newblock Universality of free homogeneous sums in every dimension.
\newblock {\em ALEA, Lat. Am. J. Probab. Math. Stat.}, 12(1):213--244, 2015.

\bibitem{SJ89}
A.~Sinclair and M.~Jerrum.
\newblock Approximate counting, uniform generation and rapidly mixing markov chains.
\newblock {\em Inf. Comput.}, 82(1):93--133, 1989.

\bibitem{trevisanLN}
L.~Trevisan.
\newblock Lecture notes on graph partitioning, expanders and spectral methods.
\newblock Lecture notes, 2016.
\newblock Link: \url{https://lucatrevisan.github.io/books/expanders-2016.pdf}.

\end{thebibliography}

\end{document}